\numberwithin{equation}{section}
\newtheorem{theorem}{Theorem}[section]
\newtheorem*{theorem*}{Theorem}
\newtheorem{lemma}[theorem]{Lemma}
\newtheorem{proposition}[theorem]{Proposition}
\newtheorem{corollary}[theorem]{Corollary}
\newtheorem{example}[theorem]{Example}
\theoremstyle{definition}
\newtheorem{definition}[theorem]{Definition}
\newtheorem{remark}[theorem]{Remark}
\theoremstyle{plain}
\newcommand{\C}{\mathbb{C}}
\newcommand{\R}{\mathbb{R}}
\newcommand{\N}{\mathbb{N}}
\newcommand{\Z}{\mathbb{Z}}
\newcommand{\He}{\mathcal{H}}
\DeclareMathOperator{\cpc}{Cap}
\DeclareMathOperator{\BCT}{BCT}
\DeclareMathOperator{\CT}{CT}
\DeclareMathOperator{\CTI}{I}
\DeclareMathOperator{\Vol}{vol}
\def\sq{\square}
\def\.{\hskip.06cm}
\def\ts{\hskip.03cm}
\def\wt{\widetilde}
\begin{document}

\title[Lower bounds for contingency tables]{Lower bounds for contingency tables via Lorentzian polynomials}
\author[Petter Br\"and\'en, Jonathan Leake, and Igor Pak]{Petter Br\"and\'en$^\star$, Jonathan Leake$^\star$, and Igor Pak$^\diamond$}
\date{\today}

\thanks{\thinspace ${\hspace{-1.5ex}}^\star$Department of Mathematics,
KTH Royal Institute of Technology, Stockholm, Sweden.
\hskip.06cm
Email:
\hskip.06cm \texttt{pbranden@kth.se} and \texttt{leake@kth.se}}

\thanks{\thinspace ${\hspace{-1.5ex}}^\diamond$Department of Mathematics,
UCLA, Los Angeles, CA, 90095.
\hskip.06cm
Email:
\hskip.06cm
\texttt{pak@math.ucla.edu}}

\begin{abstract}
We present a new lower bound on the number of \emph{contingency tables},
improving upon and extending previous lower bounds by Barvinok~\cite{Bar09,Bar16}
and Gurvits~\cite{Gur15}. As an application, we obtain new lower bounds
on the volumes of \emph{flow} and \emph{transportation polytopes}.
Our proofs are based on recent results on \emph{Lorentzian polynomials}.
\end{abstract}

\maketitle

\section{Introduction} \label{sec:intro}

\emph{Contingency tables} are fundamental objects across the sciences.  In statistics,
they are employed to study dependence structure between two or more variables,
see e.g.~\cite{Ev,FLL,Kat}.  They  play an important role in combinatorics
and graph theory since they are in bijection with bipartite multi-graphs
with given degrees, see e.g.~\cite{Bar09,DG95}. In discrete geometry
and combinatorial optimization, they are frequently studied
as integer points in transportation polytopes~\cite{DK}.  They also
appear in a variety of other contexts, from algebraic and enumerative
combinatorics~\cite{Pak,PP} to commutative algebra~\cite{DS} and
topology~\cite{KS}.

Motivated by these connections and applications, a great deal of effort is made
to approximate and to estimate the number of contingency tables, both
theoretically and computationally.  For that, a variety of tools
have been developed in different areas, such as the traditional
and probabilistic divide-and-conquer \cite{DZ,GaM}, the asymptotic
analysis \cite{BH12,CM10,GM}, the MCMC algorithms \cite{CDGJM,DKM},
approximation algorithms~\cite{AH,BLSY}, and integer programming
\cite{BDV,D09}.

In this paper we present a new lower bound (Theorem~\ref{thm:main_general_bound})
on the number of \emph{contingency tables with cell-bounded entries}, a setting which
includes a \emph{permanent}.  This bound improves upon previous lower bounds,
holds for all marginals, is fast to compute, and behaves well in many examples.
We begin with an important special case.

\smallskip

Let \ts $\bm{\alpha} = (\alpha_1,\ldots,\alpha_m) \in \N^m$ \ts and
\ts $\bm{\beta} = (\beta_1,\ldots,\beta_n) \in \N^n$ \ts be integer vectors.
A \emph{contingency table} with marginals \ts $(\bm{\alpha},\bm{\beta})$ \ts
is an $m \times n$ matrix $A=(a_{ij})$, such that \ts $a_{ij}\in \N$,
\[
    \sum_{i=1}^m \. a_{ij} \. = \. \beta_j \text{\, for all \, $1\le j\le n\,,$}
    \quad \text{and} \quad \sum_{j=1}^n \. a_{ij} \. =\. \alpha_i \text{\, for all \, $1\le i\le m\ts.$}
\]
Denote by \ts $\CT(\bm\alpha,\bm\beta)$ \ts the number of contingency tables
with marginals \ts $(\bm\alpha,\bm\beta)$.

\medskip

\begin{theorem}[{= Corollary~\ref{c:main-CT}}] \label{t:main-intro}
For every \ts $\bm{\alpha}, \bm{\beta}$ \ts as above, we have:
$$
\cpc_{\bm\alpha\,\bm\beta} \, \geq \, \CT(\bm\alpha,\bm\beta) \, \geq \,
\Biggl[\frac{1}{e^{m+n-1}} \, \prod_{i=2}^m \.\frac{1}{\alpha_i+1} \, \prod_{j=1}^n \.\frac{1}{\beta_j+1}\Biggr] \,
\cpc_{\bm\alpha\,\bm\beta}\,,
$$
where
$$\cpc_{\bm\alpha\,\bm\beta} \, := \, \. \inf_{\substack{x_i \in (0,1) \\ 1\le i\le m}} \,\.
 \inf_{\substack{y_j \in (0,1) \\ 1\le j\le n}} \ \Biggl[\.\prod_{i=1}^{m} x_i^{\alpha_i} \,
 \prod_{j=1}^{m} y_j^{\beta_j}\Biggr]^{-1} \, \prod_{i=1}^{m}\. \prod_{j=1}^{n} \. \frac{1}{1-x_iy_j}\,.
$$
\end{theorem}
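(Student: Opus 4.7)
\emph{Plan.} The upper bound $\cpc_{\bm\alpha\,\bm\beta}\ge \CT(\bm\alpha,\bm\beta)$ is the easy half: on $(0,1)^{m+n}$, $\prod_{i,j}(1-x_iy_j)^{-1}$ expands as a convergent non-negative power series with coefficients $\CT(\bm\alpha',\bm\beta')$, so the ratio defining $\cpc_{\bm\alpha\,\bm\beta}$ is pointwise at least its $(\bm\alpha,\bm\beta)$-term, and taking the infimum gives the bound.

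For the lower bound I would first use the classical identity $\prod_{j=1}^n(1-x_iy_j)^{-1}=\sum_{k\ge 0}x_i^k\, h_k(y_1,\dots,y_n)$, where $h_k$ is the $k$-th complete homogeneous symmetric polynomial, to factor out the $\bm{x}$-dependence. This gives
$$\CT(\bm\alpha,\bm\beta) \;=\; [y^{\bm\beta}]\,P(y), \qquad P(y):=\prod_{i=1}^m h_{\alpha_i}(y_1,\dots,y_n),$$
and $\cpc_{\bm\alpha\,\bm\beta}=\inf_{y\in(0,1)^n} y^{-\bm\beta}\prod_i \tilde h_{\alpha_i}(y)$ with $\tilde h_{\alpha_i}(y):=\inf_{x_i\in(0,1)}x_i^{-\alpha_i}\sum_{k\ge 0}x_i^k h_k(y)$. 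Since each $h_k$ is a Schur polynomial, hence Lorentzian, and products of Lorentzian polynomials are Lorentzian, $P(y)$ is a homogeneous Lorentzian polynomial of degree $|\bm\alpha|=|\bm\beta|$ in $n$ variables. I would then invoke a Gurvits--Br\"and\'en--Leake-style capacity inequality
$$[y^{\bm\beta}]P \;\ge\; \Bigl[e^{-n}\prod_{j=1}^n \tfrac{1}{\beta_j+1}\Bigr]\cdot\cpc_{\bm\beta}(P), \qquad \cpc_{\bm\beta}(P):=\inf_{y>0}P(y)/y^{\bm\beta}.$$
The sharpened factor $(\beta_j+1)^{-1}$ (in place of the generic $\beta_j!/\beta_j^{\beta_j}$) exploits the $\{0,1\}$-coefficient (M-convex support) structure of the $h_k$'s and is proved by inductively peeling off one variable at a time, each step costing $e(\beta_j+1)$ via the elementary inequality $(1-1/k)^{k-1}\ge e^{-1}$.

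Finally, I would compare capacities: ultra-log-concavity of $k\mapsto h_k(y)$ yields $\tilde h_{\alpha_i}(y)\le e(\alpha_i+1) h_{\alpha_i}(y)$ for each $i\ge 2$, while the first row is handled separately with no $\alpha_1$-factor needed, thanks to the redundancy $|\bm\alpha|=|\bm\beta|$. Multiplying these row-wise estimates gives $\cpc_{\bm\alpha\,\bm\beta}\le e^{m-1}\prod_{i\ge 2}(\alpha_i+1)\cdot\cpc_{\bm\beta}(P)$, which combined with the displayed Lorentzian capacity bound yields the theorem. The principal obstacle is securing both sharp constants $(\alpha_i+1)^{-1}$ and $(\beta_j+1)^{-1}$---much tighter than the Stirling-type factors $\alpha_i!/\alpha_i^{\alpha_i}$, $\beta_j!/\beta_j^{\beta_j}$ that follow from off-the-shelf Lorentzian bounds---and cleanly handling the asymmetric role of the first row; both should reduce to carefully iterated applications of $(1-1/k)^{k-1}\ge e^{-1}$ that exploit the symmetric-polynomial/ultra-log-concave structure of the sequence $(h_k(y))_k$.
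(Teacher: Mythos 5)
Your upper bound argument is fine. For the lower bound you propose a genuine reorganization of the paper's route: rather than applying the denormalized-Lorentzian capacity bound (Theorem~\ref{thm:denorm_Lorentzian_bound}) directly to the $(m+n)$-variable polynomial $\wt P_K$, you first use $\prod_j(1-x_iy_j)^{-1}=\sum_k x_i^k h_k(\bm y)$ to eliminate the $\bm x$ variables, reducing to the degree-$N$ polynomial $P(\bm y)=\prod_i h_{\alpha_i}(\bm y)$ in $n$ variables, and then compare $\cpc_{\bm\alpha\,\bm\beta}$ to $\cpc_{\bm\beta}(P)$ separately. The engine (the bivariate log-concave capacity estimate of Lemma~\ref{lem:weighted_log_concave_capacity} and Corollary~\ref{cor:denorm_Lor_bivariate_capacity}, applied iteratively) is the same as in the paper; only the bookkeeping of which variables get peeled off when differs.

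As written, though, there are three real gaps. (1) ``$h_k$ is a Schur polynomial, hence Lorentzian, and products of Lorentzian polynomials are Lorentzian'' is wrong on both counts: for $k,n\geq 2$ the coefficients of $h_k$ are not ultra-log-concave, and the Lorentzian class is not closed under products. What you need is that $h_k$ is \emph{denormalized} Lorentzian (its support is the M-convex simplex $\{|\bm\mu|=k\}$, so $N[h_k]$ is Lorentzian by \cite[Thm~7.1]{BH19}), and the denormalized Lorentzian class \emph{is} closed under products (Proposition~\ref{prop:denorm_product}). (2) Your claimed bound $[y^{\bm\beta}]P\geq e^{-n}\prod_{j=1}^n(\beta_j+1)^{-1}\cpc_{\bm\beta}(P)$ has one factor too many: for a homogeneous polynomial in $n$ variables the degree constraint makes one exponent redundant, so Theorem~\ref{thm:denorm_Lorentzian_bound} only extracts $n-1$ factors (the $\prod_{i=2}^n$ in its statement). (3) Your step $\cpc_{\bm\alpha\,\bm\beta}\leq e^{m-1}\prod_{i\geq 2}(\alpha_i+1)\cpc_{\bm\beta}(P)$, with ``no $\alpha_1$-factor needed'', cannot hold: one always has $\tilde h_k(\bm y)\geq h_k(\bm y)$ (the infimand dominates its $\ell=k$ term), so the comparison for $i=1$ is never free and you must pay $e(\alpha_i+1)$ for \emph{every} $i$, i.e.\ $m$ factors. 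Your two off-by-one errors cancel in the final count of $m+n-1$, so the conclusion is right after relabeling (you would end up dropping $\beta_1$ rather than $\alpha_1$), but neither intermediate claim is correct and the mechanism you invoke to skip $\alpha_1$ does not exist. With (2) and (3) repaired---losing $n-1$ factors from the Lorentzian bound on $P$ and $m$ factors from the termwise comparison $\tilde h_{\alpha_i}\leq e(\alpha_i+1)h_{\alpha_i}$---the decomposition does go through.
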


\medskip

Here the upper bound is elementary and follows from the definition of contingency tables.
The lower bound is a special case of our Main Theorem~\ref{thm:main_general_bound},
which is an improvement over Barvinok's lower bound~\cite{Bar09}
(Theorem~\ref{thm:barvinok_general_bound}). Note that Main Theorem~\ref{thm:main_general_bound}
gives a stronger bound when $\alpha_i, \beta_j$ are bounded, and generalizes to the
case of cell-bounded contingency tables, i.e.\ to all \ts $A=(a_{ij})$, such that \ \ts $a_{ij}\le k_{ij}$.
When all \ts $k_{ij}\in \{0,1\}$, the corresponding
contingency tables are in bijection with perfect matchings in a bipartite graph with
adjacency matrix \ts $K=(k_{ij})$, and our results recover Gurvits's recent lower
bound in~\cite{Gur15} (Theorem~\ref{thm:main_binary_bound}).


There are several ways to understand the theorem.  First, it is a
general theoretical result in line with other results on contingency tables
and graph counting, and can be used in both asymptotic enumeration and
analysis of graph algorithms (see e.g.~\cite{Bar16,Wor} and references therein).
Second, the bound in the theorem can be effectively computed,
and thus gives a fast approximation algorithm for a problem of computing
\ts $\CT(\bm\alpha,\bm\beta)$, see~$\S$\ref{ss:finrem-alg}.

Third, letting $\alpha_i, \beta_j \to \infty$ at the same rate in
Theorem~\ref{t:main-intro}, allows us to obtain new lower bounds
for the volumes of \emph{transportation polytopes}.  We explore this
connection in Section~\ref{sec:volume}.
Finally, the theorem can be used to obtain exact bounds
in many special cases of interest in the statistics literature,
and then make explicit comparisons with other bounds;
we report our numerical experiments in Section~\ref{sec:numerical}.

\medskip

\noindent
{\bf Paper structure.} \ts We begin with
a lengthy Section~\ref{sec:main_results} stating our new results
on the number of contingency tables and
discussing prior work on the subject.  We continue presenting new
results in Section~\ref{sec:random}, this time in probabilistic
framework of contingency tables with random constraints.
In Sections~\ref{sec:Lorenz}
and~\ref{sec:capacity} we discuss some known results on Lorentzian
and related classes of
polynomials, and then derive new capacity bounds for coefficients.  In
Section~\ref{sec:proofs} we prove the main results of the paper.

In the second part of the paper, we give applications of our results
and explore connections to earlier work.  First, in
Section~\ref{sec:compare_bounds}, we prove that our bounds
are sharper than the two earlier bounds by Barvinok, at least
asymptotically.  In the next two Sections~\ref{sec:volume}
and~\ref{sec:uniform}, we apply our results to computing lower bounds
for the volume of flow and transportation polytopes,
and to the special case of uniform marginals.
We conclude with numerical examples in Section~\ref{sec:numerical},
and final remarks in Section~\ref{sec:finrem}.

We should mention that the paper is very far from being
self-contained, as we repeatedly use available tools with very
little preparation, but with the exact references to the literature.
This is why we upfronted the results in Sections~\ref{sec:main_results}
and~\ref{sec:random}, to ease access to our theorems.  We repeat
the pattern in Sections~\ref{sec:volume} and~\ref{sec:uniform}.

\bigskip

\section{Main results and prior work} \label{sec:main_results}

The main results in this paper are lower bounds on $\CT_K(\bm\alpha,\bm\beta)$
for general marginals \ts $(\bm\alpha,\bm\beta)$, and cell-bounded entries given by
matrix~$K$.  In this section we present the results and compare them with
similar lower bounds due to Barvinok and Gurvits.  Along with the actual bounds,
we also give asymptotic bounds by naively applying Stirling's
approximation wherever possible. Since Stirling's approximation
is off by at most a factor of \ts $\frac{e}{\sqrt{2\pi}}$,
the asymptotic values given are decent approximations for all asymptotic regimes.
For two multivariate functions $F(\bm{a})$ and $G(\bm{a})$,
$\bm{a}=(a_1,\ldots,a_n) \in \N^n$, we write $F\gtrsim G$ if
$F(\bm{a}) \ge C\cdot G(\bm{a})$ for a universal constant $C>0$
and $\min\{a_i\}\to \infty$.

\medskip

\subsection{Definitions}\label{ss:main-basic}
Let \ts $\bm{\alpha} = (\alpha_1,\ldots,\alpha_m) \in \N^m$ \ts and \ts
$\bm{\beta} = (\beta_1,\ldots,\beta_n) \in \N^n$ \ts be integer vectors
of \emph{marginals}, such that
$$\sum_{i=1}^m \ts \alpha_i \, = \, \sum_{j=1}^n \ts \beta_j \, = \. N\ts.
$$
Let $K=(k_{ij})$ be an $m\times n$ matrix with entries in $\N \cup \{+\infty\}$.
We use $K=\infty$ to denote an all $\infty$ matrix, and $K=\bm{1}$ to denote
the all-one matrix.

Matrix $A=(a_{ij})$ is called a \emph{$K$-contingency table}
with marginals $(\bm{\alpha},\bm{\beta})$, if $A$ is a contingency table
with cell-bounded entries \ts $0\le a_{ij}\le k_{ij}$.
When $K=\infty$ we obtain the usual (unrestricted) contingency tables.
When $K=\bm{1}$, matrix $A$ is called a \emph{binary contingency table}.

As in the introduction, let \ts $\CT(\bm\alpha,\bm\beta)$ \ts denote the
number of contingency tables with marginals $(\bm\alpha,\bm\beta)$, and let
\ts $\CT_K(\bm\alpha,\bm\beta)$ \ts denote the number of $K$-contingency
tables with the same marginals.  When all $k_{ij}\in \{0,1\}$, we
call such matrix $K=(k_{ij})$ \emph{graphical}, and write \ts $\BCT_K(\alpha,\beta)$ \ts
for the number of binary contingency tables.  In particular,
when all $k_{ij}=1$, we may further write \ts $\BCT(\alpha,\beta)$ \ts
for the number of such tables. Finally, when all  $k_{ij}\in \{0,+\infty\}$,
we call such $K$ \emph{multigraphical}.

Consider the generating polynomial \ts  $P_K(\bm{x},\bm{y}) \in \N[\bm{x},\bm{y}]$ \ts
for all contingency tables w.r.t.\ their marginals:
$$
P_K(\bm{x},\bm{y}) \, := \, \prod_{i=1}^m \. \prod_{j=1}^n \, \sum_{\ell=0}^{k_{ij}} \. x_i^\ell \ts y_j^\ell
\, = \, \sum_{\bm{\alpha}, \. \bm{\beta}} \. \bm{x}^{\bm{\alpha}} \ts \bm{y}^{\bm{\beta}} \ts
\CT_K(\bm{\alpha},\bm{\beta})\ts,
$$
where \ts $\bm{x}^{\bm{\alpha}} \bm{y}^{\bm{\beta}} \ts = \ts
x_1^{\alpha_1} \ts \cdots \ts x_m^{\alpha_m} \ts y_1^{\beta_1}\ts \cdots \ts y_n^{\beta_n}$.
In a special case, for (usual) and binary contingency tables the generating polynomials are given by
$$
P_\infty(\bm{x}, \bm{y}) \, := \, \prod_{i=1}^m \. \prod_{j=1}^n \. \frac{1}{1-x_iy_j} \quad
\text{and} \quad P_{\bf 1}(\bm{x}, \bm{y}) \, := \, \prod_{i=1}^m \. \prod_{j=1}^n \. (1+x_iy_j)\..
$$

\noindent
Let \ts $F\in \R_+[\bm{x},\bm{y}]$ be a polynomial in $(m+n)$ variables.
Define the \emph{capacity} of $F$ as
$$\cpc_{\bm\alpha\,\bm\beta}(F) \, := \, \. \inf_{\bm{x},\ts\bm{y} \ts > \ts 0}  \.
\bm{x}^{-\bm{\alpha}} \ts \bm{y}^{-\bm{\beta}}  \ts  F(\bm{x},\bm{y})\,.
$$
We consider capacity of polynomials $P_K$ and other polynomials throughout the paper.

\medskip

\subsection{Main theorem}\label{ss:main-thm}
We are now ready to state the main result of the paper.

\smallskip

\begin{theorem}[Main theorem] \label{thm:main_general_bound}
Let \ts $\bm\alpha \in \N^m$, \ts $\bm\beta \in \N^n$,
such that  $\sum_i \alpha_i = \sum_j \beta_j$.  Let \ts
$K=(k_{ij})$ be an $m \times n$ matrix with all entries \ts
$k_{ij}\in\N \cup \{+\infty\}$.  Then:
\[
\cpc_{\bm\alpha\,\bm\beta}(P_K) \, \geq \, \CT_K(\bm\alpha,\bm\beta) \, \geq \,
\Biggl[\prod_{i=2}^m \. \frac{a_i^{a_i}}{(a_i+1)^{a_i+1}} \, \prod_{j=1}^n \. \frac{b_j^{b_j}}{(b_j+1)^{b_j+1}}\Biggr]
\. \cpc_{\bm\alpha\,\bm\beta}(P_K)\ts,
\]
where \ts $a_i := \min\{\alpha_i, \lambda_i-\alpha_i\}$,
\ts $b_i := \min\{\beta_j, \gamma_j-\beta_j\}$, \ts
$\lambda_i := \sum_j k_{ij}$ \ts and \ts $\gamma_j := \sum_i k_{ij}$\ts,
for all \ts $1\le i \le m$, \ts $1 \le j \le n$.
\end{theorem}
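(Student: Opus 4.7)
The upper bound is immediate: since $P_K \in \N[\bm x, \bm y]$ has nonnegative coefficients and $\CT_K(\bm\alpha, \bm\beta) = [\bm x^{\bm\alpha} \bm y^{\bm\beta}] \ts P_K$, for every positive $\bm x, \bm y$ the monomial inequality $\CT_K(\bm\alpha, \bm\beta) \. \bm x^{\bm\alpha} \bm y^{\bm\beta} \le P_K(\bm x, \bm y)$ holds, and taking the infimum yields $\CT_K(\bm\alpha, \bm\beta) \le \cpc_{\bm\alpha, \bm\beta}(P_K)$.

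For the lower bound my plan is iterative one-variable coefficient extraction, built on a capacity-to-coefficient lemma of the form: for a univariate polynomial $q(z)$ in a suitable class (Lorentzian or a closely related variant, to be developed in Sections~\ref{sec:Lorenz}--\ref{sec:capacity}) and any integer $0 \le c \le \deg q$,
\[
[z^c] \. q(z) \;\ge\; \frac{c^c}{(c+1)^{c+1}} \. \inf_{z > 0} \ts z^{-c} \. q(z)\ts.
\]
The bound is tight at $q(z) = 1/(1-z)$ (precisely the $K = \infty$ factor), and the reversal $q(z) \mapsto z^{\deg q}\ts q(1/z)$ lets us replace $c$ by $\deg q - c$; taking whichever gives a larger factor is what produces the parameters $a_i = \min(\alpha_i, \lambda_i - \alpha_i)$ and $b_j = \min(\beta_j, \gamma_j - \beta_j)$ in the statement.

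Exploiting the column factorization $P_K(\bm x, \bm y) = \prod_{j=1}^n q_j(\bm x, y_j)$ with $q_j(\bm x, y_j) := \prod_{i=1}^m \sum_{\ell=0}^{k_{ij}} (x_iy_j)^\ell$, I apply the one-variable lemma in each $y_j$ (with $\bm x$ fixed positive), multiply over $j$, and then take $\inf_{\bm x > 0} \bm x^{-\bm\alpha}(\cdot)$ on both sides to obtain
\[
\inf_{\bm x > 0} \ts \bm x^{-\bm\alpha} \. Q(\bm x) \;\ge\; \prod_{j=1}^n \frac{b_j^{b_j}}{(b_j+1)^{b_j+1}} \cdot \cpc_{\bm\alpha, \bm\beta}(P_K)\ts, \qquad Q(\bm x) \. := \. \prod_{j=1}^n [y_j^{\beta_j}] \. q_j(\bm x, y_j)\ts.
\]
The polynomial $Q$ is homogeneous in $\bm x$ of degree $N = \sum_j \beta_j = \sum_i \alpha_i$, with $[\bm x^{\bm\alpha}]\. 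Q = \CT_K(\bm\alpha, \bm\beta)$. I then iterate the one-variable bound on $Q$ in the variables $x_m, x_{m-1}, \ldots, x_2$ (in that order, with the remaining $x$-variables fixed positive), peeling off a factor $\frac{a_i^{a_i}}{(a_i+1)^{a_i+1}}$ at each step. After these $m-1$ extractions, homogeneity forces the polynomial remaining in $x_1$ to be the single monomial $\CT_K(\bm\alpha, \bm\beta)\. x_1^{\alpha_1}$, so its $x_1^{\alpha_1}$-coefficient coincides exactly with its capacity at $\alpha_1$ --- no factor is paid for $i = 1$, which is why the product in the bound starts at $i = 2$. Chaining the inequalities produces the stated lower bound.

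The main obstacle is verifying that every polynomial to which the one-variable lemma is applied --- the $q_j(\bm x, y_j)$ for fixed positive $\bm x$ and the successive one-variable restrictions of $Q$ (with the other $x$-variables positive) --- lies in the class for which the lemma is valid. This reduces to suitable closure properties under specialization at positive arguments and under partial coefficient extraction, which I expect to be the content of Sections~\ref{sec:Lorenz} and~\ref{sec:capacity}. A secondary point is the case $k_{ij} = +\infty$, where $\sum_\ell z^\ell = 1/(1-z)$ is not a polynomial; this should be handled either by a limiting argument from finite $k_{ij}$ or, more cleanly, by stating the one-variable lemma directly for rational generating functions of the requisite form (the tight case $q(z) = 1/(1-z)$ being precisely of that type).
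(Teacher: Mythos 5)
Your proposal is correct and takes essentially the same route as the paper: the engine is the bivariate capacity-to-coefficient bound for log-concave coefficient sequences (Corollary~\ref{cor:denorm_Lor_bivariate_capacity}, i.e.\ your one-variable lemma in dehomogenized form), applied variable by variable, and the precise class you need is what the paper calls \emph{denormalized Lorentzian}, whose closure under products, coefficient extraction, and positive variable identification is exactly what Proposition~\ref{prop:denorm_product} and Lemmas~\ref{lem:denorm_Lorentzian_induction}--\ref{lem:denorm_Lorentzian_proj} supply, with the $+\infty$ entries handled by the limiting argument via Lemma~\ref{lem:cap_lim_analytic}. The one organizational difference is that the paper first reverses the $\bm y$-degrees to form the homogeneous $\wt{P}_K$ and then applies the multivariate iteration in one shot (Theorem~\ref{thm:denorm_Lorentzian_bound}), whereas you peel off the $y_j$'s from the column factors of $P_K$ and then iterate on $Q$; note that verifying $Q$ lies in the right class still routes through the same $\bm y$-reversal, so this is a repackaging rather than a different argument.
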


\smallskip

Note that the first product starting with $i=2$ is not a typo,
but a feature of the bound.  Given a choice, the lower bound is
the largest when $a_1$ is chosen to be maximal.  Let us mention that
the capacity constant in the theorem can be computed in polynomial
time (see~$\S$\ref{ss:finrem-alg}).

Before moving on, let us emphasize the unrestricted nature of the matrix~$K$.
In previously known bounds, entries in $K$ were either restricted, or
certain values of $K$ were allowed by weighting variables in certain ways.
For us though, the value of $K$ is inconsequential to our proof method,
and so it shows up as a parameter in our bounds in a straightforward way.

\medskip

\subsection{Two special cases}\label{ss:main-two}
First, in the unrestricted case \ts $K=\infty$, we obtain the following result:

\smallskip

\begin{corollary}[{= Theorem~\ref{t:main-intro}}] \label{c:main-CT}
Let \ts $\bm\alpha \in \N^m$, \ts $\bm\beta \in \N^n$, such that
$\sum_i \alpha_i = \sum_j \beta_j =N$.  Then:
    \[
        \cpc_{\bm\alpha\,\bm\beta} \, \geq \,\CT(\bm\alpha,\bm\beta) \,
        \geq \, \Biggl[\frac{1}{e^{m+n-1}} \prod_{i=2}^m \frac{1}{\alpha_i+1} \prod_{j=1}^n \frac{1}{\beta_j+1} \Biggr]
        \. \cpc_{\bm\alpha\,\bm\beta}\..
    \]
    In particular, we have
    \[
        \cpc_{\bm\alpha\,\bm\beta} \, \geq \, \CT(\bm\alpha,\bm\beta) \,
        \geq \, e^{-4N} \cdot \cpc_{\bm\alpha\,\bm\beta}\..
    \]
\end{corollary}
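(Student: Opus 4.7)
The plan is to deduce both chains of inequalities in Corollary~\ref{c:main-CT} from the Main Theorem~\ref{thm:main_general_bound} specialized to the unrestricted case $K=\infty$, together with two elementary scalar estimates. With $K=\infty$ we have $\lambda_i = \gamma_j = +\infty$, so $a_i = \min\{\alpha_i,+\infty\} = \alpha_i$ and $b_j = \beta_j$, and Theorem~\ref{thm:main_general_bound} reads
$$\cpc_{\bm\alpha\,\bm\beta} \,\geq\, \CT(\bm\alpha,\bm\beta) \,\geq\, \Biggl[\prod_{i=2}^{m} \frac{\alpha_i^{\alpha_i}}{(\alpha_i+1)^{\alpha_i+1}} \prod_{j=1}^{n} \frac{\beta_j^{\beta_j}}{(\beta_j+1)^{\beta_j+1}}\Biggr] \cpc_{\bm\alpha\,\bm\beta},$$
where $\cpc_{\bm\alpha\,\bm\beta} = \cpc_{\bm\alpha\,\bm\beta}(P_\infty)$.

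To obtain the first displayed inequality of the corollary, I would establish the pointwise bound
$$\frac{t^t}{(t+1)^{t+1}} \,\geq\, \frac{1}{e(t+1)} \qquad\text{for every integer } t\geq 0.$$
For $t=0$ (using the convention $0^0:=1$) this reads $1 \geq 1/e$; for $t \geq 1$, rewriting the left side as $\tfrac{1}{t+1}\bigl(1+1/t\bigr)^{-t}$ and invoking the classical inequality $(1+1/t)^{t}\leq e$ completes the verification. Applying this estimate to each of the $(m-1)+n$ factors above yields exactly the desired first chain of inequalities.

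For the $e^{-4N}$ bound, I would first observe that rows with $\alpha_i=0$ (and similarly columns with $\beta_j=0$) may be deleted without changing $\CT(\bm\alpha,\bm\beta)$ or $\cpc_{\bm\alpha\,\bm\beta}$: sending the corresponding $x_i$ to $0^+$ makes $x_i^{-\alpha_i}=1$ and collapses each factor $\frac{1}{1-x_iy_j}$ to $1$, matching the deletion at the level of $P_\infty$. Hence one may assume $\alpha_i,\beta_j \geq 1$, forcing $m,n \leq N$. The elementary bound $\log(t+1)\leq t$ then gives $\prod_{i=2}^{m}(\alpha_i+1) \leq e^{N-\alpha_1}\leq e^{N}$ and $\prod_{j=1}^{n}(\beta_j+1)\leq e^{N}$, so the prefactor is at least
$$\frac{1}{e^{m+n-1}}\cdot \frac{1}{e^{N}}\cdot \frac{1}{e^{N}} \,=\, e^{1-(m+n)-2N} \,\geq\, e^{1-4N} \,\geq\, e^{-4N},$$
as required.

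The main obstacle is minimal: once Theorem~\ref{thm:main_general_bound} is granted, everything reduces to the scalar inequalities $(1+1/t)^{t}\leq e$ and $\log(t+1)\leq t$. The one point that genuinely deserves care is the reduction to $\alpha_i,\beta_j\geq 1$ for the $e^{-4N}$ estimate; without it, $m+n$ can be arbitrarily large compared to $N$ (consider $\bm\alpha=(1,0,\ldots,0)$) and the crude bound $m+n\leq 2N$ fails, so one must explicitly justify the harmlessness of discarding zero marginals both for $\CT$ and for the capacity.
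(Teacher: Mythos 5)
Your proposal is correct and, for the first displayed inequality, follows exactly the paper's own route: specialize Theorem~\ref{thm:main_general_bound} to $K=\infty$ (so $a_i=\alpha_i$, $b_j=\beta_j$), then apply the scalar bound $\frac{t^t}{(t+1)^{t+1}}\geq \frac{1}{e(t+1)}$; the paper proves precisely this chain in $\S$\ref{sec:gen_ct_proof}, with the same scalar lemma (stated there as $\frac{k^k}{(k+1)^{k+1}}=\frac{1}{k+1}(\tfrac{k}{k+1})^k\geq\frac{1}{e(k+1)}$).

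For the \emph{``in particular''} part (the $e^{-4N}$ bound) the paper states the estimate but gives no explicit argument in $\S$\ref{sec:gen_ct_proof}, so your proof genuinely fills a gap. Your two ingredients --- the reduction to strictly positive marginals (which forces $m,n\leq N$ and so $m+n-1< 2N$, and is justified since zero rows/columns change neither $\CT$ nor the capacity, as $x_i\to 0^+$ collapses the corresponding $(1-x_iy_j)^{-1}$ factors to $1$), and the bound $t+1\leq e^t$ giving $\prod_{i\geq 2}(\alpha_i+1)\leq e^N$ and $\prod_j(\beta_j+1)\leq e^N$ --- combine cleanly to $e^{1-(m+n)-2N}\geq e^{-4N}$. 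You correctly flag the only subtlety: without first discarding zero marginals, $m+n$ can exceed $2N$ (e.g., $\bm\alpha=(N,0,\ldots,0)$ with $m$ large) and the naive crude bound breaks, so this reduction step is essential rather than cosmetic.
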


\smallskip

Second, in the case that $\alpha_i, \ts \beta_j$ are bounded for $i,j \geq 2$,
the approximation ratio we obtain is independent of the dominant marginal values,
$\alpha_1$ and $\beta_1$.  Note that the number of terms in the polynomial $P_K$
in the following result \emph{does} depend on the dominant marginal values,
even though the approximation ratio does not.

\smallskip

\begin{theorem} \label{thm:almost_const}
Let \ts $\bm\alpha \in \N^m$, \ts $\bm\beta \in \N^n$, such that
$\sum_i \alpha_i = \sum_j \beta_j$, and \ts $\alpha_i,\beta_j \leq c$ \ts for all $i,j \geq 2$.
Then:
    \[
        \cpc_{\bm\alpha\,\bm\beta}(P_K) \, \geq \, \CT(\bm\alpha,\bm\beta)
        \, \geq \, \frac{1}{(m+n-1) \ts (e(c+1))^{m+n-1}} \.\cdot\. \cpc_{\bm\alpha\,\bm\beta}(P_K)
    \]
where \ts $K=(k_{ij})$, and \ts $k_{ij} = \min(\alpha_i,\beta_j)$ \ts for all~$i,j$.
Note that this is the entrywise minimal value of $K$ such that \ts
$\CT(\bm\alpha,\bm\beta) = \CT_K(\bm\alpha,\bm\beta)$.
\end{theorem}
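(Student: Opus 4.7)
The plan is to derive Theorem~\ref{thm:almost_const} directly from Main Theorem~\ref{thm:main_general_bound} applied to $K=(k_{ij})$ with $k_{ij}=\min(\alpha_i,\beta_j)$. The upper bound $\cpc_{\bm\alpha\,\bm\beta}(P_K)\ge \CT(\bm\alpha,\bm\beta)$ is then inherited immediately, and the identity $\CT(\bm\alpha,\bm\beta)=\CT_K(\bm\alpha,\bm\beta)$ holds because any contingency table already satisfies $a_{ij}\le \alpha_i$ and $a_{ij}\le \beta_j$; entrywise minimality follows by exhibiting, for each cell $(i_0,j_0)$, a table achieving $a_{i_0 j_0}=\min(\alpha_{i_0},\beta_{j_0})$.

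The lower bound then reduces to estimating
\[
\prod_{i=2}^m \frac{a_i^{a_i}}{(a_i+1)^{a_i+1}}\,\prod_{j=1}^n \frac{b_j^{b_j}}{(b_j+1)^{b_j+1}}
\]
from below. The workhorse inequality is
\[
\frac{t^t}{(t+1)^{t+1}}\;=\;\frac{1}{t+1}\left(1-\frac{1}{t+1}\right)^{t}\;\ge\;\frac{1}{e(t+1)} \qquad (t\ge 0),
\]
which follows from the standard fact that $(1-1/s)^{s-1}$ decreases to $1/e$. For each row $i\ge 2$ one has $a_i\le \alpha_i\le c$, giving a factor of at least $\frac{1}{e(c+1)}$; analogously each column $j\ge 2$ satisfies $b_j\le \beta_j\le c$ and contributes at least $\frac{1}{e(c+1)}$.

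The only step requiring care is the column $j=1$, where $\beta_1$ is unbounded. Here I would use $b_1\le \gamma_1-\beta_1$ together with the cancellation
\[
\gamma_1-\beta_1\;=\;\bigl[\min(\alpha_1,\beta_1)-\beta_1\bigr]\,+\,\sum_{i=2}^m\min(\alpha_i,\beta_1)\;\le\;0+(m-1)c,
\]
so that $b_1+1\le (m-1)c+1\le (m+n-1)(c+1)$. This yields $\frac{b_1^{b_1}}{(b_1+1)^{b_1+1}}\ge \frac{1}{e(m+n-1)(c+1)}$, and collecting the resulting $(m-1)+(n-1)+1=m+n-1$ factors of $\frac{1}{e(c+1)}$ together with the extra $\frac{1}{m+n-1}$ coming from $j=1$ produces exactly the claimed constant $\frac{1}{(m+n-1)(e(c+1))^{m+n-1}}$.

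The main obstacle is conceptual rather than calculational: Main Theorem~\ref{thm:main_general_bound} skips row $i=1$ in its product, which naturally absorbs a large $\alpha_1$, but it does \emph{not} skip any column, so the unboundedness of $\beta_1$ must be handled through the factor indexed by $b_1$. The structural cancellation $\min(\alpha_1,\beta_1)-\beta_1\le 0$, which is special to the choice $k_{ij}=\min(\alpha_i,\beta_j)$, is precisely what converts the potentially huge $\beta_1$ into a mere multiplicative factor of $m+n-1$.
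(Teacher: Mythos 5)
Your proposal is correct and follows essentially the same route as the paper's own proof. The paper invokes Remark~\ref{rem:stronger_general_bound}, which is exactly Main Theorem~\ref{thm:main_general_bound} after applying the simplification $\frac{t^t}{(t+1)^{t+1}}\ge \frac{1}{e(t+1)}$ that you state as your workhorse inequality; and the paper handles the unbounded column $j=1$ by the same bound $\min\{\beta_1,\gamma_1-\beta_1\}\le \gamma_1-\beta_1\le (m-1)c$, exploiting the same cancellation $\min(\alpha_1,\beta_1)\le\beta_1$ special to $k_{ij}=\min(\alpha_i,\beta_j)$.
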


\medskip

\subsection{Barvinok's first bound}\label{ss:main-barv-1}
For the case of general contingency tables with a multigraphical matrix
$K=(k_{ij})$, Barvinok gives the following bounds:

\begin{theorem}[{Barvinok~\cite[Thm~1.3]{Bar09}}] \label{thm:barvinok_general_bound}
Let \ts $\bm\alpha \in \N^m$, \ts $\bm\beta \in \N^n$, such that \ \ts $N = \sum_i \alpha_i = \sum_j \beta_j$.
Let $K=(k_{ij})$  be an $m \times n$ multigraphical
matrix, i.e.\ \ts $k_{ij}\in \{0,+\infty\}$, for all \ts  $1\le i\le m$ \ts
and \ts $1\le j \le n$. Then, for $m+n \geq 10$, we have:
    \[
        \cpc_{\bm\alpha\,\bm\beta}(P_K) \, \geq \, \CT_K(\bm\alpha,\bm\beta)
        \, \geq \, C_\mathrm{Barv}(K,\bm\alpha,\bm\beta) \, \cpc_{\bm\alpha\,\bm\beta}(P_K)\ts,
    \]
where
    \[
        \begin{split}
            C_\mathrm{Barv}(K,\bm\alpha,\bm\beta) \, &= \,\.
            \frac{\Gamma(\frac{m+n}{2})}{2\ts e^5 \ts \pi^{\frac{m+n-2}{2}} \ts mn \ts (N+mn)} \ts
            \left(\frac{2}{(mn)^2(N+1)(N+mn)}\right)^{m+n-1} \\
                &\qquad \times \. \frac{N!\ts (N+mn)! \ts (mn)^{mn}}{N^N  (N+mn)^{N+mn} \ts (mn)!} \, \.
                \prod_{i=1}^m \. \frac{\alpha_i^{\alpha_i}}{\alpha_i!} \,  \prod_{j=1}^n \. \frac{\beta_j^{\beta_j}}{\beta_j!}\..
        \end{split}
    \]
\end{theorem}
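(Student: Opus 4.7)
The plan is to follow Barvinok's \emph{maximum entropy} method from~\cite{Bar09}, which expresses $\CT_K(\bm\alpha,\bm\beta)$ as the capacity times a combinatorial probability, and then bounds that probability via a multidimensional local central limit estimate. Since the stated constant $C_{\mathrm{Barv}}$ is an explicit expression involving factorials and gamma functions, the argument proceeds by direct analytic estimation rather than by appealing to any capacity inequality stated earlier in the paper.

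First, let $(\bm x^*, \bm y^*) \in (0,1)^{m+n}$ be the minimizer of the capacity functional, which exists and is unique by strict log-convexity of $-\bm\alpha\log\bm x - \bm\beta\log\bm y + \log P_K(\bm x,\bm y)$. For each $(i,j)$ with $k_{ij} = +\infty$, attach an independent geometric random variable $\xi_{ij}$ with $\Pr[\xi_{ij}=t]=(1-x_i^*y_j^*)(x_i^*y_j^*)^t$; the stationarity conditions at $(\bm x^*,\bm y^*)$ are precisely $\sum_j \mathbb{E}[\xi_{ij}]=\alpha_i$ and $\sum_i \mathbb{E}[\xi_{ij}]=\beta_j$. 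A short calculation shows that for every $K$-contingency table $A$ with marginals $(\bm\alpha,\bm\beta)$,
\[
\Pr[\Xi = A] \, = \, \frac{(\bm x^*)^{\bm\alpha}(\bm y^*)^{\bm\beta}}{\cpc_{\bm\alpha\,\bm\beta}(P_K)}\,,
\]
which is \emph{independent} of $A$. Summing over all such $A$ yields the key identity
\[
\CT_K(\bm\alpha,\bm\beta) \, = \, \cpc_{\bm\alpha\,\bm\beta}(P_K) \.\cdot\. \Pr\big[\Xi \text{ has marginals } (\bm\alpha,\bm\beta)\big].
\]

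The main obstacle is lower-bounding the probability on the right. I would enlarge the target event to a combinatorial tube of radius $O(mn)$ around $(\bm\alpha,\bm\beta)$ in the marginal space, apply a concentration argument to show that $\Xi$'s marginals land in this tube with probability bounded below by the mass of a $(m+n-1)$-dimensional Gaussian, and divide by the number of lattice points in the tube (which contributes the $mn$, $N+mn$, and $N+1$ factors). Evaluating the resulting Gaussian at the mean produces the factor $\Gamma(\tfrac{m+n}{2})\,\pi^{-\frac{m+n-2}{2}}$ from the normalization on the affine subspace of feasible marginals, the $-1$ in the exponent reflecting the relation $\sum_i\alpha_i=\sum_j\beta_j$. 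The assumption $m+n\geq 10$ is exactly what makes the underlying local CLT error provably small.

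Finally, the remaining factorial factors are obtained by translating the optimal scaling $(\bm x^*,\bm y^*)$ back into a closed form. Using Stirling to rewrite the geometric normalization $\prod_{i,j}(1-x_i^*y_j^*)^{-1}$ yields the block $N!(N+mn)!(mn)^{mn}/[N^N(N+mn)^{N+mn}(mn)!]$, while comparing the true minimizer to the uniform optimum $x_i^*y_j^*=N/(N+mn)$ via relative entropy produces the marginal-dependent product $\prod_i \alpha_i^{\alpha_i}/\alpha_i! \prod_j \beta_j^{\beta_j}/\beta_j!$. The hardest technical step throughout is controlling the local CLT error uniformly when some $\alpha_i$ or $\beta_j$ is much smaller than $N$, since the variance of the corresponding row or column sum degenerates; absorbing this degeneracy is what forces the many small correction factors appearing in $C_{\mathrm{Barv}}$ and explains why Main Theorem~\ref{thm:main_general_bound} is strictly stronger in that regime.
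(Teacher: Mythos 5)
This theorem is quoted from Barvinok's paper~\cite{Bar09} and is not proved anywhere in the present paper, so there is no paper-internal proof to compare against; what you have attempted is a reconstruction of Barvinok's own argument. Your high-level framing is the right one: Barvinok does indeed argue via the exponential-family (maximum-entropy) decomposition, attaching independent geometric variables $\xi_{ij}$ at the capacity minimizer, exploiting the fact that $\Pr[\Xi=A]$ is constant over all $K$-contingency tables with the prescribed marginals, and reducing the lower bound on $\CT_K$ to a lower bound on the probability that $\Xi$ hits the marginals exactly. The identity $\CT_K(\bm\alpha,\bm\beta)=\cpc_{\bm\alpha\,\bm\beta}(P_K)\cdot\Pr[\Xi\text{ has marginals }(\bm\alpha,\bm\beta)]$ is precisely his starting point, and the upper bound $\CT_K\le\cpc_{\bm\alpha\,\bm\beta}(P_K)$ is immediate since the probability is $\le 1$.

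That said, two things need to be flagged. First, a small but real computational slip: you assert $\Pr[\Xi=A]=(\bm x^*)^{\bm\alpha}(\bm y^*)^{\bm\beta}/\cpc_{\bm\alpha\,\bm\beta}(P_K)$, but plugging in the geometric weights gives
\[
\Pr[\Xi=A]\;=\;\prod_{i,j}(1-x_i^*y_j^*)\,(x_i^*y_j^*)^{a_{ij}}\;=\;\frac{(\bm x^*)^{\bm\alpha}(\bm y^*)^{\bm\beta}}{P_K(\bm x^*,\bm y^*)}\;=\;\frac{1}{\cpc_{\bm\alpha\,\bm\beta}(P_K)}\,,
\]
so the monomial factor cancels and the correct statement is simply $\Pr[\Xi=A]=\cpc_{\bm\alpha\,\bm\beta}(P_K)^{-1}$. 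You nonetheless wrote down the correct consequence, so this is a typo-level error, but it should be fixed before anything downstream is trusted. Second, and more substantively: everything after the identity is phrased in the conditional (``I would enlarge\ldots'', ``apply a concentration argument\ldots'', ``produces the factor\ldots''), and no part of the local-CLT lower bound on $\Pr[\Xi\text{ has marginals }(\bm\alpha,\bm\beta)]$ is actually established. That bound is the entire content of Barvinok's Theorem~1.3 and occupies most of his paper; it requires genuine work to control the characteristic function of the $(m+n-1)$-dimensional lattice random walk, to justify the Gaussian comparison uniformly over the degenerate-variance regime, and to assemble the various Stirling-type corrections. Moreover, the specific attributions you propose for each block of $C_{\mathrm{Barv}}$ (e.g.\ deriving $N!(N+mn)!\,(mn)^{mn}/[N^N(N+mn)^{N+mn}(mn)!]$ from a Stirling rewrite of $\prod(1-x_i^*y_j^*)^{-1}$, or the $\prod\alpha_i^{\alpha_i}/\alpha_i!$ term from a relative-entropy comparison to the uniform optimum) are plausible-sounding guesses rather than derivations, and it is not evident that Barvinok's proof factors along those exact lines. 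As it stands the proposal is a faithful sketch of the strategy, not a proof, and the quantitative heart of the theorem remains to be supplied.
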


We also give a simplified version of Barvinok's bound in the following.

\begin{theorem}\label{t:barvinok-CT-const}
    The value of $C_\mathrm{Barv}(K,\bm\alpha,\bm\beta)$ in the previous theorem is such that asymptotically we have
    \[
        R^{n+m-1} \ts S \, \gtrsim \, C_\mathrm{Barv}(K,\bm\alpha,\bm\beta) \gtrsim R^{n+m} \ts S,
    \]
    where \ts $\min\{\alpha_i,\beta_j\}\to \infty$, \ts $\min\{m,n\}\to \infty$,
    $$
    R \, = \, \frac{(m+n)^{\frac{1}{2}}}{\pi \ts e^{\frac{1}{2}} \ts (mn)^2 \ts (N+1)\ts (N+mn)}, \qquad \text{and} \qquad
    S \, = \, \left(\prod_{i=1}^m \. \frac{1}{\alpha_i} \,  \prod_{j=1}^n \. \frac{1}{\beta_j}\right)^{\frac{1}{2}}.
    $$
    In particular, we have
    \[
        \cpc_{\bm\alpha\,\bm\beta}(P_K) \geq \CT_K(\bm\alpha,\bm\beta) \gtrsim N^{-7(m+n)} \cdot \cpc_{\bm\alpha\,\bm\beta}(P_K).
    \]
\end{theorem}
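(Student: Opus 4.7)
\bigskip

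\noindent\textbf{Proof plan.} The plan is to apply Stirling's approximation \ts $k! = \sqrt{2\pi k}(k/e)^k(1+o(1))$ \ts to every factorial in the formula for \ts $C_\mathrm{Barv}(K,\bm\alpha,\bm\beta)$ \ts of Theorem~\ref{thm:barvinok_general_bound} and then bookkeep which polynomial-in-$(m,n,N)$ factors survive. The key preliminary simplifications are
\[
\prod_{i=1}^m \frac{\alpha_i^{\alpha_i}}{\alpha_i!} \,\sim\, \frac{e^N}{(2\pi)^{m/2}\prod_i \sqrt{\alpha_i}}\,, \qquad
\prod_{j=1}^n \frac{\beta_j^{\beta_j}}{\beta_j!} \,\sim\, \frac{e^N}{(2\pi)^{n/2}\prod_j \sqrt{\beta_j}}\,,
\]
and
\[
\frac{N!\,(N+mn)!\,(mn)^{mn}}{N^N(N+mn)^{N+mn}(mn)!} \,\sim\, \sqrt{\frac{2\pi\,N(N+mn)}{mn}}\cdot e^{-2N}\,.
\]
The exponential factors \ts $e^N\cdot e^N\cdot e^{-2N}$ \ts cancel exactly, and the resulting square-root quantities collapse to the combination
\ts $\sqrt{N(N+mn)/mn}\cdot (2\pi)^{-(m+n-1)/2}\cdot S$.

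Next I would merge this with the prefactor
\ts $\frac{\Gamma((m+n)/2)}{2e^5\pi^{(m+n-2)/2}mn(N+mn)}$
and with the main power
\ts $\bigl(\frac{2}{(mn)^2(N+1)(N+mn)}\bigr)^{m+n-1}$.
The factor $\bigl(\frac{2}{(mn)^2(N+1)(N+mn)(2\pi)^{1/2}}\bigr)^{m+n-1}$ is exactly \ts $R^{m+n-1}\cdot(m+n)^{-(m+n-1)/2}$, so after one more application of Stirling to \ts $\Gamma((m+n)/2)$, namely \ts $\Gamma((m+n)/2)\sim \sqrt{2\pi}\bigl(\tfrac{m+n}{2e}\bigr)^{(m+n-1)/2}$, the factors of \ts $(m+n)^{1/2}$ \ts per variable are matched into the $R$'s. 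What is left over as the ``gap'' between an \ts $R^{m+n}$ \ts bound and an \ts $R^{m+n-1}$ \ts bound is a sub-polynomial discrepancy involving \ts $mn$, $N$, and the leftover square roots, which is trapped between two constant-power-of-$R$ factors; this is precisely what produces the two-sided asymptotic \ts $R^{m+n-1}S\gtrsim C_\mathrm{Barv}\gtrsim R^{m+n}S$.

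For the ``in particular'' claim, I would then crudely bound $R$ and $S$ separately. Using \ts $m,n\le N+1$, one gets \ts $(mn)^2(N+1)(N+mn)\le O(N^6)$ and hence \ts $R\gtrsim N^{-6}$ (the \ts $(m+n)^{1/2}$ \ts numerator only helps). For $S$, the bound \ts $\alpha_i,\beta_j\le N$ \ts gives \ts $S\ge N^{-(m+n)/2}$. Combining,
\[
R^{m+n}\,S \,\gtrsim\, N^{-6(m+n)}\cdot N^{-(m+n)/2} \,=\, N^{-13(m+n)/2} \,\ge\, N^{-7(m+n)}\,,
\]
which, together with the right-hand inequality of Theorem~\ref{thm:barvinok_general_bound}, yields the stated conclusion.

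The main obstacle I expect is not conceptual but combinatorial: carefully tracking the per-variable contributions of the \ts $\sqrt{2\pi}$ \ts and \ts $(m+n)^{1/2}$ \ts factors so that the exponent of $R$ lands in the tight window $[m+n-1,\,m+n]$, rather than collecting an avoidable extra power of $mn$ or $N$. The symmetric cancellation of the \ts $e^{\pm N}$ \ts factors is robust, but the sub-exponential polynomial factors must be grouped exactly the way $R$ is defined, and the $\Gamma((m+n)/2)$ term has to be expanded with just enough precision to justify the one-$R$ gap between the upper and lower asymptotic bounds.
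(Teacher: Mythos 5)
Your overall plan is the right one and mirrors what the paper does in Section~\ref{ss:compare-1}: apply Stirling uniformly, observe the $e^{\pm N}$ factors cancel, assemble the remaining subexponential factors into $R^{m+n-1}S$ times a residual $T$, and sandwich $T$ between a constant and $R$ to get the two-sided bound. The paper's own computation yields $C_\mathrm{Barv}\approx R^{m+n-1}S\cdot T$ with $T=\sqrt{\tfrac{\pi^2 N}{4e^{11}(mn)^3(N+mn)}}$, and then $T\lesssim 1$ and $T\gtrsim R$ are checked directly; this is essentially your step.

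However, two things do not survive inspection. First, your per-variable constant bookkeeping is off: the claimed identity $\bigl(\tfrac{2}{(mn)^2(N+1)(N+mn)(2\pi)^{1/2}}\bigr)^{m+n-1}=R^{m+n-1}(m+n)^{-(m+n-1)/2}$ would force $\tfrac{2}{\sqrt{2\pi}}=\tfrac{1}{\pi\sqrt{e}}$, which is false; moreover $\Gamma(z)\sim\sqrt{2\pi/z}\,(z/e)^z$ has a $z^{-1/2}$ that your expansion of $\Gamma\bigl(\tfrac{m+n}{2}\bigr)$ drops, and you have not yet merged in the $\pi^{(m+n-2)/2}$ from the $\Gamma$-prefactor. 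Because these constants appear to the power $m+n-1$, a factor-of-$c$ error per variable is a super-polynomial error in total, and it has to be verified that this error is absorbed by the one-unit $R$-gap you invoke; as written the claim is not justified.

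Second, and more seriously, the ``in particular'' step has a genuine arithmetic error that the rest of the argument hangs on. You assert $(mn)^2(N+1)(N+mn)\le O(N^6)$, but $mn\le N^2$ only gives $(mn)^2\le N^4$, $(N+1)\le O(N)$, $(N+mn)\le O(N^2)$, hence $O(N^7)$ and $R\gtrsim N^{-7}$. Correcting this, your chain becomes $R^{m+n}S\gtrsim N^{-7(m+n)}\cdot N^{-(m+n)/2}=N^{-15(m+n)/2}$, which is strictly weaker than the claimed $N^{-7(m+n)}$, so the conclusion does not follow. To close the gap you need to exploit what you are currently throwing away: the $(m+n)^{1/2}$ in the numerator of $R$ (which $\to\infty$ since $\min(m,n)\to\infty$) and the sharper AM--GM bound $\prod_i\alpha_i\le(N/m)^m$, $\prod_j\beta_j\le(N/n)^n$ instead of the crude $\alpha_i,\beta_j\le N$; those supply the missing $N^{(m+n)/2}$-size factor. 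As it stands, the ``in particular'' part of your proposal does not go through.
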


\smallskip

Note that the asymptotics of the bound ratios \ts $N^{-7(m+n)}$ \ts in
Theorem~\ref{t:barvinok-CT-const} and \ts $e^{-4N}$ \ts in Theorem~\ref{t:barvinok-CT-const} \ts
are not directly comparable.  In $\S$\ref{ss:compare-1} we compare the lower bounds directly
and show that our bound in Theorem~\ref{thm:main_general_bound} is sharper. See also
Section~\ref{sec:numerical} for numerical examples.

\smallskip

\begin{remark}[Shapiro's upper bound]\label{rem:main-shapiro}
In~\cite{Sha} (see also~\cite{Bar17}), the Shapiro improves upon
Barvinok's first upper bound by adding a capacity-based \emph{correction term}.
It is best presented in the dual form of the proof of Lemma~\ref{l:Cap1-uniform},
and states:
$$\CT(\bm\alpha,\bm\beta) \, \leq \,
\cpc_{\bm\alpha\,\bm\beta} \,
\min_{\tau \in K_{mn}} \. \prod_{(i,j)\in \tau} \. \frac{1}{1+z_{ij}}\.,
$$
where $Z=(z_{ij})$ is the typical matrix defining capacity in the proof of the lemma,
the minimum is over spanning trees~$\tau$ in the complete bipartite graph $K_{mn}$, and
the product is over all edges in~$\tau$.  Since we concentrate on the lower bounds,
we omit the general $K=(k_{ij})$ case.  We only use this bound in Section~\ref{sec:numerical}
for numerical comparisons.
\end{remark}

\medskip

\subsection{Binary contingency tables}\label{ss:main-binary}
In an important special case of binary contingency tables, Barvinok gives the following bounds.

\smallskip

\begin{theorem}[{Barvinok~\cite[Thm~5]{Bar10}}] \label{thm:barvinok_binary_bound}
Let \ts $\bm\alpha \in \N^m$, \ts $\bm\beta \in \N^n$, such that \ \ts $\sum_i \alpha_i = \sum_j \beta_j$.
Let $K=(k_{ij})$  be an $m \times n$ graphical matrix, i.e.\ $k_{ij}\in \{0,1\}$ for all~$i,j$.
Then:
    \[
        \cpc_{\bm\alpha\,\bm\beta}(P_K) \, \geq \, \BCT_K(\bm\alpha,\bm\beta)
        \, \geq \, \Biggl[\frac{(mn)!}{(mn)^{mn}} \, \prod_{i=1}^m \.
        \frac{(n-\alpha_i)^{n-\alpha_i}}{(n-\alpha_i)!} \.
        \prod_{j=1}^n \, \frac{\beta_j^{\beta_j}}{\beta_j!} \Biggr] \. \cpc_{\bm\alpha\,\bm\beta}(P_K).
    \]
\end{theorem}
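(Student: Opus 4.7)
The upper bound is immediate from the definition of capacity: $P_K$ has nonnegative coefficients and $\BCT_K(\bm\alpha,\bm\beta) = [\bm x^{\bm\alpha}\bm y^{\bm\beta}] P_K$, so $\cpc_{\bm\alpha\bm\beta}(P_K) \ge \BCT_K(\bm\alpha,\bm\beta)$. For the lower bound, my plan is to apply a Lorentzian capacity-to-coefficient inequality to a homogenized auxiliary polynomial, rather than to the inhomogeneous $P_K$ itself. Specifically, I would work with
\[
H(\bm x,\bm y) \, := \, \prod_{i,j}\,(x_i+y_j)^{k_{ij}},
\]
a product of real stable linear forms, hence real stable and therefore Lorentzian, and homogeneous of degree $d := \sum_{i,j} k_{ij}$.

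The first key step is the coefficient identity $\BCT_K(\bm\alpha,\bm\beta) = [\bm x^{\bm\lambda-\bm\alpha}\bm y^{\bm\beta}]\,H$, where $\lambda_i := \sum_j k_{ij}$. This follows by expanding $(x_i+y_j)^{k_{ij}} = \sum_{s_{ij}} \binom{k_{ij}}{s_{ij}} x_i^{s_{ij}} y_j^{k_{ij}-s_{ij}}$ and noting that the resulting degree constraints force the complementary array $(k_{ij}-s_{ij})$ to be a $K$-cell-bounded contingency table with marginals $(\bm\alpha,\bm\beta)$; since $k_{ij}\in\{0,1\}$ all the binomial coefficients equal $1$, so the coefficient is exactly $\BCT_K(\bm\alpha,\bm\beta)$. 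Next, I would invoke the Br\"and\'en--Huh--Gurvits capacity inequality for homogeneous Lorentzian polynomials (developed in Sections~\ref{sec:Lorenz}--\ref{sec:capacity}): for any $(\bm\mu,\bm\nu)$ with $|\bm\mu|+|\bm\nu|=d$,
\[
[\bm x^{\bm\mu}\bm y^{\bm\nu}]\,H \, \ge \, \frac{d!}{d^d}\, \prod_i \frac{\mu_i^{\mu_i}}{\mu_i!}\, \prod_j \frac{\nu_j^{\nu_j}}{\nu_j!}\, \cpc_{\bm\mu,\bm\nu}(H).
\]
Finally, I would relate the two capacities via the factorization $H(\bm x,\bm y) = \prod_i x_i^{\lambda_i}\,P_K(1/\bm x,\bm y)$ and the substitution $\bm u=1/\bm x$ in the defining infimum, which yields $\cpc_{\bm\lambda-\bm\alpha,\bm\beta}(H) = \cpc_{\bm\alpha,\bm\beta}(P_K)$. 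Specializing to $K=\bm 1$ gives $\lambda_i = n$ and $d=mn$, producing exactly the constants $\frac{(mn)!}{(mn)^{mn}} \prod_i \frac{(n-\alpha_i)^{n-\alpha_i}}{(n-\alpha_i)!} \prod_j \frac{\beta_j^{\beta_j}}{\beta_j!}$ in the statement.

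The main technical input is the Br\"and\'en--Huh--Gurvits van der Waerden-type inequality above, which I would take as a black box. The nontrivial modeling choice---and the step most easily missed---is the introduction of the homogeneous lift $H$ together with the ``dual'' coefficient extraction at exponent $\bm\lambda-\bm\alpha$ on the $\bm x$-side rather than $\bm\alpha$. This flip is what produces the factors involving $n-\alpha_i$ (rather than $\alpha_i$) in the final bound; capacity inequalities applied directly to $P_K$ yield constants of a substantively different shape. Barvinok's original 2010 argument predates the Lorentzian framework and proceeds by a related but more intricate route via stable polynomial capacity, but the Lorentzian route above reproduces his constants transparently.
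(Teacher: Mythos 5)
This theorem is cited in the paper from Barvinok~\cite{Bar10} and no proof of it is actually given there; the closest thing is the proof (in $\S$\ref{sec:binary_ct_proof}) of the stronger Gurvits bound, Theorem~\ref{thm:main_binary_bound}, which is said to follow from the binomial case by taking $s=\tfrac12$. That proof uses exactly your lift: for graphical $K$, the paper's $\widetilde{P}_K(\bm x,\bm y)=\prod_{i,j}\sum_{\ell=0}^{k_{ij}}x_i^\ell y_j^{k_{ij}-\ell}$ coincides with your $H=\prod_{i,j}(x_i+y_j)^{k_{ij}}$, your coefficient identity $\BCT_K(\bm\alpha,\bm\beta)=[\bm x^{\bm\lambda-\bm\alpha}\bm y^{\bm\beta}]H$ is the complement of the paper's $[\widetilde P_K]_{\bm\alpha,\bm\gamma-\bm\beta}$ (both are correct, via $A\leftrightarrow K-A$), and your capacity identity $\cpc_{\bm\lambda-\bm\alpha,\bm\beta}(H)=\cpc_{\bm\alpha,\bm\beta}(P_K)$ is the same substitution $\bm y\mapsto\bm y^{-1}$ the paper uses. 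So the lift and the bookkeeping are exactly right.

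Two issues, one presentational and one substantive. First, the coefficient--capacity inequality you invoke,
\[
[\bm x^{\bm\mu}\bm y^{\bm\nu}]H \ \geq\ \frac{d!}{d^d}\,\prod_i\frac{\mu_i^{\mu_i}}{\mu_i!}\,\prod_j\frac{\nu_j^{\nu_j}}{\nu_j!}\ \cpc_{\bm\mu,\bm\nu}(H),
\]
is not what appears in $\S$\ref{sec:Lorenz}--\ref{sec:capacity}. What is proved there (Theorem~\ref{thm:Lorentzian_bound}) is the sharper Gurvits bound $\prod_{i\ge2}\binom{d_i}{\alpha_i}\alpha_i^{\alpha_i}(d_i-\alpha_i)^{d_i-\alpha_i}/d_i^{d_i}$ with per-variable degrees $d_i$. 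Your version is the older van der Waerden--type bound of Gurvits [Gur08]. It does follow from the paper's Theorem~\ref{thm:Lorentzian_bound} by noting $d_i\le\alpha_1+\cdots+\alpha_i$ and that $d\mapsto\binom{d}{k}k^k(d-k)^{d-k}/d^d$ is decreasing, after which the product telescopes to $\frac{d!}{d^d}\prod\alpha_i^{\alpha_i}/\alpha_i!$ — but you should either cite [Gur08] directly or spell out this reduction rather than point at sections that state a different inequality.

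The substantive gap is the final ``Specializing to $K=\bm 1$'' step. The theorem is stated for every graphical $K$ with the \emph{$K$-independent} constant $\frac{(mn)!}{(mn)^{mn}}\prod_i\frac{(n-\alpha_i)^{n-\alpha_i}}{(n-\alpha_i)!}\prod_j\frac{\beta_j^{\beta_j}}{\beta_j!}$, while your argument produces the $K$-dependent constant $\frac{d!}{d^d}\prod_i\frac{(\lambda_i-\alpha_i)^{\lambda_i-\alpha_i}}{(\lambda_i-\alpha_i)!}\prod_j\frac{\beta_j^{\beta_j}}{\beta_j!}$ with $d=\sum_{i,j}k_{ij}$, $\lambda_i=\sum_j k_{ij}$. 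These agree only when $K=\bm 1$. To finish for general graphical $K$ you must show your constant dominates Barvinok's, which amounts to showing that $F(\bm a):=\frac{(\sum a_i+N)!}{(\sum a_i+N)^{\sum a_i+N}}\prod_i\frac{a_i^{a_i}}{a_i!}$ is coordinatewise decreasing and then using $\lambda_i-\alpha_i\le n-\alpha_i$. This holds (the single-coordinate ratio is $(1+1/a_k)^{a_k}/(1+1/A)^A\le1$ since $(1+1/m)^m$ is increasing), but it is a genuine extra step you did not take, and without it your write-up proves the theorem only for $K=\bm1$. Note the paper sidesteps this entirely by proving Gurvits's constant directly, which is larger than Barvinok's for every graphical $K$ and already carries the correct $\lambda_i,\gamma_j$ dependence.
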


\smallskip

For the usual (unrestricted) binary contingency tables, this gives:

\smallskip

\begin{corollary}
Let \ts $\bm\alpha \in \N^m$, \ts $\bm\beta \in \N^n$, such that \ \ts $\sum_i \alpha_i = \sum_j \beta_j$.
Let $K=\bm{1}$ be an $m \times n$  all-ones matrix. Then:
\[
        \cpc_{\bm\alpha\,\bm\beta}(P_K) \, \geq \,
        \BCT(\bm\alpha,\bm\beta) \gtrsim \Biggl[\frac{mn}{(2\pi)^{m+n-1}} \,
        \prod_{i=1}^m \. \frac{1}{n-\alpha_i} \, \prod_{j=1}^n \. \frac{1}{\beta_j}\Biggr]^{\frac{1}{2}}
        \.  \cpc_{\bm\alpha\,\bm\beta}(P_K)\ts,
\]
    where \ts $\min\{\alpha_i,\beta_j\}\to \infty$.
\end{corollary}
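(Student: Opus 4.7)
The plan is to apply Theorem~\ref{thm:barvinok_binary_bound} to the all-ones matrix $K = \mathbf{1}$ and then simplify the combinatorial constant using Stirling's approximation in the regime $\min\{\alpha_i,\beta_j\}\to\infty$. Since $k_{ij}=1$ for all $i,j$, we have $\BCT_K(\bm\alpha,\bm\beta) = \BCT(\bm\alpha,\bm\beta)$, so Barvinok's bound immediately gives the lower bound with constant
\[
C \. := \. \frac{(mn)!}{(mn)^{mn}} \. \prod_{i=1}^m \. \frac{(n-\alpha_i)^{n-\alpha_i}}{(n-\alpha_i)!} \. \prod_{j=1}^n \. \frac{\beta_j^{\beta_j}}{\beta_j!}\ts.
\]

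The next step is to rewrite each factor using the standard estimate $k^k/k! \sim e^k/\sqrt{2\pi k}$, and similarly $(mn)!/(mn)^{mn} \sim \sqrt{2\pi mn}\ts e^{-mn}$. Substituting these approximations yields
\[
C \. \gtrsim \. \sqrt{2\pi mn}\,\ts e^{-mn} \, \prod_{i=1}^m \. \frac{e^{n-\alpha_i}}{\sqrt{2\pi(n-\alpha_i)}} \, \prod_{j=1}^n \. \frac{e^{\beta_j}}{\sqrt{2\pi \beta_j}}\ts.
\]
The crucial bookkeeping step is to check that all the exponentials cancel: the total exponent of $e$ is
\[
-mn \ts + \ts \sum_{i=1}^m (n-\alpha_i) \ts + \ts \sum_{j=1}^n \beta_j \. = \. -mn \ts + \ts (mn - N) \ts + \ts N \. = \. 0,
\]
where we used the shared marginal sum $\sum_i \alpha_i = \sum_j \beta_j = N$. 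This cancellation is the single substantive verification in the argument.

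Collecting the remaining $\sqrt{2\pi}$ factors, one obtains $\sqrt{2\pi mn}\ts/(2\pi)^{(m+n)/2} = \sqrt{mn/(2\pi)^{m+n-1}}$, and pulling the square root over the product of $(n-\alpha_i)$ and $\beta_j$ terms produces exactly the claimed expression
\[
C \. \gtrsim \. \Biggl[\frac{mn}{(2\pi)^{m+n-1}} \, \prod_{i=1}^m \. \frac{1}{n-\alpha_i} \, \prod_{j=1}^n \. \frac{1}{\beta_j}\Biggr]^{1/2}\ts.
\]

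The upper bound $\cpc_{\bm\alpha\,\bm\beta}(P_K) \geq \BCT(\bm\alpha,\bm\beta)$ is already contained in Theorem~\ref{thm:barvinok_binary_bound}, and follows in any case directly from the definition of capacity applied to $P_{\mathbf{1}}$. There is no real obstacle here: the proof is purely mechanical Stirling bookkeeping, and the only ``content'' is the exponential cancellation noted above, which is precisely the reason the asymptotic form comes out so cleanly.
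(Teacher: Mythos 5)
Your proof is correct and is exactly the derivation the paper leaves implicit: specialize Theorem~\ref{thm:barvinok_binary_bound} to $K=\bm{1}$ and apply Stirling termwise, with the exponential cancellation $-mn + \sum_i(n-\alpha_i) + \sum_j\beta_j = 0$ and the $\sqrt{2\pi}$ bookkeeping $\sqrt{2\pi mn}/(2\pi)^{(m+n)/2} = (mn/(2\pi)^{m+n-1})^{1/2}$ giving the stated constant. (One small point you might flag for yourself: the clean ``$\gtrsim$'' also uses that each $n-\alpha_i$ grows in the stated regime, a looseness shared by the paper's own statement.)
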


\smallskip

Gurvits~\cite{Gur15} was able to improve upon Barvinok's bound in the following
result by proving a better constant for all graphical $m \times n$ matrices $K$.
For the sake of simplicity, he only explicitly provides a bound
in the case of $\bm\alpha = \bm\beta$ are uniform.
He also gives other bounds for non-uniform \ts $\bm\alpha$ and~$\bm\beta$,
in the form that are similar to Theorem~\ref{thm:barvinok_binary_bound}.


\begin{theorem}[{Gurvits~\cite[Thm~5.1]{Gur15}}] \label{thm:main_binary_bound}
Let \ts $\bm\alpha \in \N^m$, \ts $\bm\beta \in \N^n$, such that \ \ts $\sum_i \alpha_i = \sum_j \beta_j$.
Let $K=(k_{ij})$  be an $m \times n$ graphical matrix, i.e.\ $k_{ij}\in \{0,1\}$ for all~$i,j$.
Then:
\[
      \cpc_{\bm\alpha\,\bm\beta}(P_K) \, \geq \, \BCT_K(\bm\alpha,\bm\beta) \,
      \geq \, \Biggl[\prod_{i=2}^m \binom{\lambda_i}{\alpha_i} \.
      \frac{\alpha_i^{\alpha_i} (\lambda_i-\alpha_i)^{\lambda_i-\alpha_i}}{\lambda_i^{\lambda_i}}
      \. \prod_{j=1}^n \binom{\gamma_j}{\beta_j} \. \frac{\beta_j^{\beta_j} (\gamma_j-\beta_j)^{\gamma_j-\beta_j}}{\gamma_j^{\gamma_j}}
      \Biggr] \. \cpc_{\bm\alpha\,\bm\beta}(P_K),
\]
    where \ts $\lambda_i = \sum_j k_{ij}$ \ts and \ts $\gamma_j = \sum_i k_{ij}$\., for all $i$ and~$j$.
\end{theorem}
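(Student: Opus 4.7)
Plan: The upper bound $\cpc_{\bm\alpha\,\bm\beta}(P_K)\ge\BCT_K(\bm\alpha,\bm\beta)$ is immediate: since $P_K(\bm x,\bm y)=\prod_{k_{ij}=1}(1+x_iy_j)$ has nonnegative coefficients, we have $\bm x^{-\bm\alpha}\bm y^{-\bm\beta}P_K(\bm x,\bm y)\ge\BCT_K(\bm\alpha,\bm\beta)$ for every $\bm x,\bm y>0$, and the infimum dominates the same quantity.

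For the lower bound, the plan is to exploit the Lorentzian structure of $P_K$. Following a polarization approach, I would replace each $x_i$ by a symmetric average of $\lambda_i$ fresh copies (one assigned to each factor $1+x_iy_j$ in which $x_i$ appears) and similarly each $y_j$ by $\gamma_j$ copies; the resulting multi-affine polynomial $\widetilde P_K$ is Lorentzian by the closure results of Section~\ref{sec:Lorenz}, being a product of bilinear factors in disjoint variable pairs. Depolarization — setting all copies of a variable equal — relates multi-affine coefficients of $\widetilde P_K$ to the coefficient of $\bm x^{\bm\alpha}\bm y^{\bm\beta}$ in $P_K$ via the symmetric-function identity $e_\alpha(1,\ldots,1)=\binom{\lambda}{\alpha}$, producing the binomial prefactors $\prod_i\binom{\lambda_i}{\alpha_i}\prod_j\binom{\gamma_j}{\beta_j}$ in the bound.

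The analytic engine is the one-variable capacity inequality for a nonnegative real-rooted polynomial $p(z)=\sum_{k=0}^{\lambda}c_kz^k$:
$$c_\alpha \,\ge\, \binom{\lambda}{\alpha}\,\frac{\alpha^\alpha(\lambda-\alpha)^{\lambda-\alpha}}{\lambda^\lambda}\,\inf_{z>0}z^{-\alpha}p(z),$$
with equality at the extremizer $p(z)=(1+z)^\lambda$: a direct calculation shows that $\inf_{z>0}z^{-\alpha}(1+z)^\lambda=\lambda^\lambda/[\alpha^\alpha(\lambda-\alpha)^{\lambda-\alpha}]$ is attained at $z=\alpha/(\lambda-\alpha)$, matching $c_\alpha=\binom{\lambda}{\alpha}$. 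The multivariate Lorentzian analogue, from the capacity machinery of Section~\ref{sec:capacity}, says that taking a univariate slice $[z^\alpha]f$ of a denormalized Lorentzian polynomial $f$ (with $\deg_z f=\lambda$) preserves the Lorentzian class in the remaining variables and loses at most the above factor in capacity. I would apply this slicing sequentially: first to each $y_j$ for $j=1,\ldots,n$ (each step contributing $\binom{\gamma_j}{\beta_j}\beta_j^{\beta_j}(\gamma_j-\beta_j)^{\gamma_j-\beta_j}/\gamma_j^{\gamma_j}$), then to each $x_i$ for $i=2,\ldots,m$ (contributing $\binom{\lambda_i}{\alpha_i}\alpha_i^{\alpha_i}(\lambda_i-\alpha_i)^{\lambda_i-\alpha_i}/\lambda_i^{\lambda_i}$). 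Accumulating the product yields exactly the constant stated in the theorem. The variable $x_1$ is omitted because after every other coefficient has been pinned, the remaining polynomial in $x_1$ alone is a single monomial of forced degree $\alpha_1=N-\sum_{i\ge 2}\alpha_i$, which incurs no further capacity loss; this accounts for the asymmetric range $i\ge 2$.

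The main obstacle is verifying that Lorentzianness is preserved at each slicing step and that the capacities compose multiplicatively — both are consequences of the closure properties developed in Section~\ref{sec:Lorenz}. A secondary technical point is the polarization/depolarization bookkeeping responsible for the binomials; once phrased in terms of symmetric functions evaluated on the diagonal this is a routine computation. The sharpness of the univariate constant at $p(z)=(1+z)^\lambda$ is what dictates the precise form $\binom{\lambda}{\alpha}\alpha^\alpha(\lambda-\alpha)^{\lambda-\alpha}/\lambda^\lambda$ appearing in the theorem.
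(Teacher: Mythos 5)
Your upper bound argument is fine, and the overall strategy — a univariate capacity inequality with extremizer $(1+z)^\lambda$, applied one variable at a time so that $m+n-1$ of the $m+n$ variables each contribute a binomial factor — is the right shape. But the specific mechanism you propose for getting into the Lorentzian/real-stable framework does not work. The polarized polynomial $\prod_{(i,j):\,k_{ij}=1}\bigl(1+x_{ij}y_{ij}\bigr)$ is neither homogeneous (so it cannot be Lorentzian) nor real stable: the single factor $1+xy$ vanishes at $(x,y)=(\mathrm{i},\mathrm{i})$, a point with both coordinates in the open upper half-plane, so $1+xy$ is not stable, and a product of non-stable factors in disjoint variables is still not stable. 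Consequently, the capacity machinery from Sections~\ref{sec:Lorenz}--\ref{sec:capacity} cannot be applied to your $\widetilde P_K$, and the ``depolarization bookkeeping'' step has nothing to bookkeep.

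The missing idea is the $\bm y$-inversion. Rather than polarizing, one sets
\[
\wt P_K(\bm x,\bm y)\;=\;y_1^{\gamma_1}\cdots y_n^{\gamma_n}\,P_K(\bm x,\bm y^{-1})\;=\;\prod_{(i,j):\,k_{ij}=1}(x_i+y_j),
\]
which is a product of nonnegative linear forms, hence homogeneous, real stable, and Lorentzian, and whose monomial coefficients are exactly $\BCT_K(\bm\alpha,\bm\gamma-\bm\beta)$. Applying the multivariate capacity bound for real stable polynomials (Theorem~\ref{thm:Lorentzian_bound}) to this $\wt P_K$, with $d_i=\lambda_i$ and $d_j=\gamma_j$, gives precisely the stated constant; the omission of $i=1$ comes out of the $n\geq 2$ induction base in that theorem, much as you describe. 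The paper itself takes a slight detour and deduces the result from Theorem~\ref{thm:general_binom_bound} by setting $s=\tfrac12$ (so $Q_{K,1/2}=2^{-|\supp K|}P_K$ and the scalar cancels in the ratio), but that proof also hinges on the inversion $\wt Q_{K,s}=\prod(sx_i+(1-s)y_j)^{k_{ij}}$, not on polarization. In short: replace the polarization step with the $\bm y$-inversion, keep your univariate bound and the sequential slicing, and the argument goes through.
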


\smallskip

See~$\S$\ref{ss:finrem-graphs} for a combinatorial interpretation.  In the usual (unrestricted) binary
contingency tables, Gurvits's theorem gives:

\smallskip

\begin{corollary}
Let \ts $\bm\alpha \in \N^m$, \ts $\bm\beta \in \N^n$, such that \ \ts $\sum_i \alpha_i = \sum_j \beta_j$.
Let $K=\bm{1}$ be an $m \times n$  all-ones matrix. Then:
    \[
        \cpc_{\bm\alpha\,\bm\beta}(P_K) \, \geq \, \BCT(\bm\alpha,\bm\beta)
        \, \gtrsim \, \Biggl[\frac{1}{(2\pi)^{m+n-1}} \, \prod_{i=2}^m \. \frac{n}{\alpha_i(n-\alpha_i)} \,
        \prod_{j=1}^n \. \frac{m}{\beta_j(m-\beta_j)}\Biggr]^{\frac{1}{2}}  \cpc_{\bm\alpha\,\bm\beta}(P_K),
    \]
where \ts $\min\{\alpha_i,\beta_j\}\to \infty$.
\end{corollary}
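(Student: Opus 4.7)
The plan is to specialize Theorem~\ref{thm:main_binary_bound} to the all-ones matrix $K=\bm{1}$ and then apply Stirling's approximation factor by factor, exploiting the $\gtrsim$ notation fixed at the beginning of Section~\ref{sec:main_results} (which absorbs constants per factor of size $e/\sqrt{2\pi}$).

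First I would observe that for $K = \bm{1}$ we have $\lambda_i = \sum_j k_{ij} = n$ for each $i$ and $\gamma_j = \sum_i k_{ij} = m$ for each $j$. Plugging these values directly into Theorem~\ref{thm:main_binary_bound} produces the lower-bound constant
\[
C \; = \; \prod_{i=2}^m \binom{n}{\alpha_i}\,\frac{\alpha_i^{\alpha_i}(n-\alpha_i)^{n-\alpha_i}}{n^n}\; \cdot \; \prod_{j=1}^n \binom{m}{\beta_j}\,\frac{\beta_j^{\beta_j}(m-\beta_j)^{m-\beta_j}}{m^m}\,.
\]
The upper bound $\cpc_{\bm\alpha\,\bm\beta}(P_K)\ge \BCT(\bm\alpha,\bm\beta)$ is carried over directly from Theorem~\ref{thm:main_binary_bound}, so no extra work is needed there.

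Second, I would simplify each binomial--power factor using Stirling. The standard estimate $k! \sim \sqrt{2\pi k}\,(k/e)^k$ gives
\[
\binom{n}{\alpha_i} \; \sim \; \sqrt{\frac{n}{2\pi\,\alpha_i(n-\alpha_i)}}\; \cdot \; \frac{n^n}{\alpha_i^{\alpha_i}(n-\alpha_i)^{n-\alpha_i}},
\]
so that the factor in the first product collapses neatly to $\sqrt{n/(2\pi\,\alpha_i(n-\alpha_i))}$; an entirely symmetric computation treats the $\beta_j$ factor. This is valid precisely in the asymptotic regime $\min\{\alpha_i,\beta_j\}\to\infty$ that is assumed in the statement, because in this regime each of $\alpha_i$, $n-\alpha_i$, $\beta_j$, $m-\beta_j$ tends to infinity (for the last two we are implicitly using that the unrestricted case requires $\alpha_i\le n$ and $\beta_j\le m$ to give nonzero counts).

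Finally, I would multiply the $(m-1)+n = m+n-1$ simplified factors, collecting the $\sqrt{2\pi}$ denominators into a single $(2\pi)^{m+n-1}$ under the square root to obtain exactly
\[
C \; \gtrsim \; \left[\frac{1}{(2\pi)^{m+n-1}}\; \prod_{i=2}^m \frac{n}{\alpha_i(n-\alpha_i)}\; \prod_{j=1}^n \frac{m}{\beta_j(m-\beta_j)}\right]^{1/2}.
\]
The only real bookkeeping issue, and the sole potential obstacle, is making sure that the Stirling errors (each of relative size $1+O(1/k)$) compose cleanly over the $m+n-1$ factors; but since the $\gtrsim$ convention only tracks a universal multiplicative constant and the factors multiply Stirling's tight-form bounds, this is harmless and no further care is needed.
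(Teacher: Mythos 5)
Your approach is exactly right and is the only natural one: specialize Gurvits's bound (Theorem~\ref{thm:main_binary_bound}) to $K=\bm{1}$ (so $\lambda_i=n$, $\gamma_j=m$), and then apply Stirling's approximation in the form $\binom{n}{k}\sim\sqrt{n/(2\pi k(n-k))}\cdot n^n/(k^k(n-k)^{n-k})$ to each of the $m+n-1$ factors. The paper gives no separate proof of this corollary, precisely because this calculation is immediate; your derivation matches the intended argument.

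Two small caveats worth flagging. First, your justification that ``each of $\alpha_i$, $n-\alpha_i$, $\beta_j$, $m-\beta_j$ tends to infinity'' because $\alpha_i\leq n$ is not quite a deduction: $\alpha_i\to\infty$ and $\alpha_i\leq n$ forces $n\to\infty$, but $n-\alpha_i$ can remain bounded (e.g.\ $\alpha_i=n-1$). The corollary as stated in the paper is equally informal on this point, and the authors' $\gtrsim$ convention is explicitly loose (``naively applying Stirling's approximation''), so you are matching the paper's level of rigor, but the reasoning you give is a non-sequitur. Second, your final remark that the Stirling errors ``compose cleanly'' and are ``harmless'' is a bit too quick: each one-sided Stirling bound loses a factor of order $\sqrt{2\pi}/e$, and since there are $m+n-1$ binomials, the accumulated loss is $(\sqrt{2\pi}/e)^{\Theta(m+n)}$, which is not a universal constant if $m+n$ grows (and it must grow once $\alpha_i\to\infty$ with $\alpha_i\leq n$). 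Again, this is the same informality built into the paper's $\gtrsim$ (which the authors flag in the preamble to Section~\ref{sec:main_results}), so the discrepancy is cosmetic, but stating it as ``harmless and no further care is needed'' overstates the cleanliness.
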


\smallskip


In $\S$\ref{sec:binary_ct_proof}, we give a more straightforward proof
of Theorem~\ref{thm:main_binary_bound} using the same technique
as in the proof of our Main Theorem~\ref{thm:main_general_bound}.
Note that Gurvits's technique in~\cite{Gur15} cannot be applied
to general contingency tables.

\medskip

\subsection{Barvinok's second bound}\label{ss:main-barv-2}
In~\cite{Bar16}, Barvinok gives another upper and lower bound for
$\CT_K(\alpha,\beta)$, similar to the form of Theorem~\ref{thm:barvinok_general_bound},
except the polynomial $P_K$ is replaced by
\[
    H_N(\bm{x},\bm{y}) \, := \, h_N(\bm{x}\cdot\bm{y}) \, =
    \, h_N(\ldots\ts,\ts x_i\ts y_j \ts, \ts \ldots)\.,
\]
where $h_N(\ldots)$ is the complete homogeneous polynomial in $mn$ variables.
Barvinok observes that the coefficients of $H_N(\bm{x},\bm{y})$ are precisely
$\CT(\bm\alpha,\bm\beta)$ for all \ts $\sum_i \alpha_i = \sum_j \beta_j = N$.
Using this, he obtains:

\smallskip

\begin{theorem}[{Barvinok~\cite[Thm~8.4.2]{Bar16}}] \label{thm:barvinok_cs_bound}
    Let \ts $\bm\alpha \in \N^m$ \ts and \ts $\bm\beta \in \N^n$, such that \ \ts
    $\sum_i \alpha_i = \sum_j \beta_j=N$.  Then:
    \[
        \cpc_{\bm\alpha\,\bm\beta}(H_N) \, \geq \, \CT(\bm\alpha,\bm\beta)
        \, \geq \, C_\mathrm{H}(\bm\alpha,\bm\beta) \. \cdot\. \cpc_{\bm\alpha\,\bm\beta}(H_N),
    \]
    where
    \[
        C_\mathrm{H}(\bm\alpha,\bm\beta) \, = \, \binom{N+m-1}{m-1}^{-1} \binom{N+n-1}{n-1}^{-1} \.
        \frac{N!}{N^N} \,\. \max\left\{\prod_{i=1}^m \.\frac{\alpha_i^{\alpha_i}}{\alpha_i!}\.,
        \.\prod_{j=1}^n \.\frac{\beta_j^{\beta_j}}{\beta_j!}\right\}.
    \]
\end{theorem}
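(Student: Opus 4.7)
The upper bound $\cpc_{\bm\alpha\,\bm\beta}(H_N)\ge\CT(\bm\alpha,\bm\beta)$ is immediate: since $\CT(\bm\alpha,\bm\beta)=[\bm{x}^{\bm\alpha}\bm{y}^{\bm\beta}]H_N$ and $H_N$ has non-negative coefficients, for every $\bm{x},\bm{y}>0$ we have $\bm{x}^{-\bm\alpha}\bm{y}^{-\bm\beta}H_N(\bm{x},\bm{y})\ge\CT(\bm\alpha,\bm\beta)$, and passing to the infimum yields the claim.

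For the lower bound, the first step is to exploit the two Cauchy-type expansions
\[
H_N(\bm{x},\bm{y})\,=\,\sum_{|\bm\alpha'|=N}\bm{x}^{\bm\alpha'}\prod_{i=1}^m h_{\alpha'_i}(\bm{y})\,=\,\sum_{|\bm\beta'|=N}\bm{y}^{\bm\beta'}\prod_{j=1}^n h_{\beta'_j}(\bm{x})\ts,
\]
obtained by grouping the matrices $A$ in $h_N(\bm{x}\cdot\bm{y})=\sum_A\bm{x}^{r(A)}\bm{y}^{c(A)}$ by their row (resp.\ column) sums. Combined with the elementary sandwich $(z_1+\cdots+z_k)^N/N!\le h_N(\bm z)\le (z_1+\cdots+z_k)^N$, applied to $\bm z=\bm{x}\cdot\bm{y}$ so that $\sum_{ij}x_iy_j=(\sum_i x_i)(\sum_j y_j)$, this produces the crude capacity bound
\[
\cpc_{\bm\alpha\,\bm\beta}(H_N)\,\le\,\cpc_{\bm\alpha}\!\big((\textstyle\sum_i x_i)^N\big)\,\cpc_{\bm\beta}\!\big((\textstyle\sum_j y_j)^N\big)\,=\,\frac{N^{2N}}{\prod_i\alpha_i^{\alpha_i}\prod_j\beta_j^{\beta_j}}\ts.
\]

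Assume WLOG that the maximum in $C_\mathrm{H}(\bm\alpha,\bm\beta)$ is attained by $\prod_i\alpha_i^{\alpha_i}/\alpha_i!$. The crucial algebraic observation is that the capacity-to-coefficient ratio of the linear power $(x_1+\cdots+x_m)^N$ at $\bm\alpha$ is exactly
\[
\frac{[\bm{x}^{\bm\alpha}](x_1+\cdots+x_m)^N}{\cpc_{\bm\alpha}\big((x_1+\cdots+x_m)^N\big)}\,=\,\frac{N!/\bm\alpha!}{N^N/\bm\alpha^{\bm\alpha}}\,=\,\frac{N!}{N^N}\prod_{i=1}^m\frac{\alpha_i^{\alpha_i}}{\alpha_i!}\ts.
\]
To transfer this ratio to $H_N$, I would evaluate $\cpc_{\bm\alpha\,\bm\beta}(H_N)$ at its optimal point $(\bm{x}^\star,\bm{y}^\star)$, where by the KKT conditions the Gibbs distribution $\mu$ on pairs $(\bm\alpha',\bm\beta')$ with weights proportional to $\CT(\bm\alpha',\bm\beta')\bm{x}^{\star\bm\alpha'}\bm{y}^{\star\bm\beta'}$ has mean $(\bm\alpha,\bm\beta)$. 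Using the log-concavity properties inherited by $\CT(\bm\alpha',\bm\beta')$ from its generating polynomial, $\mu$ is a log-concave distribution on a support of size at most $\binom{N+m-1}{m-1}\binom{N+n-1}{n-1}$; its mode at $(\bm\alpha,\bm\beta)$ then captures at least a $1/\bigl(\binom{N+m-1}{m-1}\binom{N+n-1}{n-1}\bigr)$ fraction of the total mass $H_N(\bm{x}^\star,\bm{y}^\star)=\cpc_{\bm\alpha\,\bm\beta}(H_N)\,\bm{x}^{\star\bm\alpha}\bm{y}^{\star\bm\beta}$. Combining with the linear-power ratio above yields $\CT(\bm\alpha,\bm\beta)\ge C_\mathrm{H}(\bm\alpha,\bm\beta)\,\cpc_{\bm\alpha\,\bm\beta}(H_N)$.

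The main obstacle will be that $H_N$ is not a Lorentzian polynomial in $(\bm{x},\bm{y})$: the substitution $z_{ij}\mapsto x_iy_j$ is quadratic rather than linear, so the sharp Lorentzian capacity machinery of Sections~\ref{sec:Lorenz}--\ref{sec:capacity} is not directly available. The argument must instead pass through the cruder ``linear power'' sandwich above, which is responsible for the relatively weak combinatorial prefactor $\binom{N+m-1}{m-1}^{-1}\binom{N+n-1}{n-1}^{-1}$ in $C_\mathrm{H}$, in contrast with the much sharper constants obtained in our Main Theorem~\ref{thm:main_general_bound}. Making the mode-versus-total-mass step rigorous, and correctly extracting the asymmetric max between the row- and column-products (rather than their geometric average), will be the most delicate part of the bookkeeping.
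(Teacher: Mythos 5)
The paper does not prove Theorem~\ref{thm:barvinok_cs_bound}; it is cited from \cite[Thm~8.4.2]{Bar16} and used only for comparison with the new lower bound, so there is no in-paper proof to compare against. Evaluating your attempt on its own terms: the upper bound, the Cauchy-type expansion of $H_N$, the elementary $h_N$ sandwich, and the exact linear-power capacity ratio are all correct, and you are right that $H_N$ is not Lorentzian in $(\bm{x},\bm{y})$ so the paper's machinery does not directly apply. But the decisive step has a genuine gap. The first-order conditions at the capacity-optimal point $(\bm{x}^\star,\bm{y}^\star)$ say that the Gibbs distribution $\mu(\bm\alpha',\bm\beta')\propto\CT(\bm\alpha',\bm\beta')\ts\bm{x}^{\star\bm\alpha'}\bm{y}^{\star\bm\beta'}$ has \emph{mean} $(\bm\alpha,\bm\beta)$, whereas your argument needs $(\bm\alpha,\bm\beta)$ to be its \emph{mode}. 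For a discrete (even log-concave) distribution these generally differ, and without mode $=$ mean the summand $\CT(\bm\alpha,\bm\beta)\ts\bm{x}^{\star\bm\alpha}\bm{y}^{\star\bm\beta}$ need not dominate, so the ``captures at least a $1/|\text{support}|$ fraction of the mass'' estimate does not apply to it.

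There is also an internal inconsistency that flags this gap: if $(\bm\alpha,\bm\beta)$ really were the mode, your argument would yield the strictly stronger constant $\binom{N+m-1}{m-1}^{-1}\binom{N+n-1}{n-1}^{-1}$ with no factor $\frac{N!}{N^N}\max\{\cdots\}$ at all (that factor is always $\le 1$), and the linear-power ratio you compute would then play no role whatsoever in the final bound — yet your sketch lists it as a ``crucial algebraic observation''. The extra factor in $C_\mathrm{H}$ is precisely the price paid for the mismatch between mean and mode, and your proposal never actually derives it: the phrase ``combining with the linear-power ratio above yields $\CT\ge C_\mathrm{H}\cpc$'' is asserted, not argued. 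Finally, the ``log-concavity properties inherited by $\CT(\bm\alpha',\bm\beta')$'' are not available for free: they require Barvinok's Brunn--Minkowski inequality for contingency tables \cite{Bar07}, and even granting a suitable multivariate version, one-dimensional log-concavity only localizes the mode to within one unit of the mean, which is not enough for the mode-mass estimate you need here.
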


\smallskip

Note that Barvinok gives bounds for general $K$ with
entries in $\{0,+\infty\}$, but we suppress this generalization
here for the sake of simplicity.
In~$\S$\ref{ss:compare-2}, we compare our bound with this second
Barvinok's bound.  Namely, we prove that the lower bound
in Theorem~\ref{thm:main_general_bound} is sharper
than that in Theorem~\ref{thm:barvinok_binary_bound}. See also
Section~\ref{sec:numerical} for numerical examples,
and $\S$\ref{ss:finrem-indep}
for the independence heuristic partly motivating this bound.

\bigskip

\section{Random contingency tables} \label{sec:random}

\subsection{The setup}\label{ss:random-setup}
In this section, we give new lower bounds on the probability that a random
contingency table has certain marginals when the entries are drawn from
various random variables. These results parallel similar results given in~\cite{Bar12};
e.g., Theorem 6.3 (2). In what follows, we let $\mu_{\bm\alpha,\bm\beta}$
denote the probability that a random contingency table has marginals
$\bm\alpha,\bm\beta$.

We will also consider the capacity of a different family of polynomials
in what follows. Specifically given choices of $\bm\alpha,\bm\beta$,
a choice of $K=(k_{ij})$ with finite entries, and a choice of $s \in [0,1]$,
we consider \ts $\cpc_{\bm\alpha\,\bm\beta}(Q_{K,s})$, where
\[
Q_{K,s}(\bm{x},\bm{y}) \,  :=  \, \prod_{i=1}^m \.
\prod_{j=1}^n \bigl(s\ts x_i\ts y_j \. + \. (1-s)\bigr)^{k_{ij}}.
\]
We now state our bounds on $\mu_{\bm\alpha,\bm\beta}$ for two specific cases:
when entries are binomial-distributed, and when entries are Poisson-distributed.

\medskip

\subsection{Binomial-distributed entries}\label{ss:random-binomial}
Our first result regarding random contingency tables gives bounds
in the case that the entries of the table are binomial random variables
with parameter \ts $s \in [0,1]$. Specifically given a $K$ with finite entries,
a random contingency table $A$ is sampled by sampling $a_{ij}$ from the set
\ts $\{0,1,\ldots,k_{ij}\}$ \ts with probability given by
\[
    \Pr[a_{ij} = \ell] \, = \, \binom{k_{ij}}{\ell} \. s^\ell \ts (1-s)^{k_{ij}-\ell}.
\]
We now bound \ts $\mu_{\bm\alpha,\bm\beta}$ \ts in this case as follows.

\smallskip

\begin{theorem} \label{thm:general_binom_bound}
    Let \ts $\bm\alpha \in \N^m$, \ts $\bm\beta \in \N^n$, and let $K=(k_{ij})$ be an $m \times n$
    matrix with finite entries \ts $k_{ij}\in \N$. Let $A=(a_{ij})$ be an $m \times n$ random matrix
    where each entry \ts $a_{ij}$ \ts is an independent binomial random variable on \ts
    $\{0,1,\ldots,k_{ij}\}$ \ts with parameter \ts $s \in [0,1]$.
    The probability $\mu_{\bm\alpha,\bm\beta}$ that $A$ has marginals $(\bm\alpha,\bm\beta)$ is bounded by
    \[
        \cpc_{\bm\alpha\,\bm\beta}(Q_{K,s}) \, \geq \, \mu_{\bm\alpha,\bm\beta} \, \geq \,
        \Biggl[\prod_{i=2}^m \binom{\lambda_i}{\alpha_i} \.
        \frac{\alpha_i^{\alpha_i} (\lambda_i-\alpha_i)^{\lambda_i-\alpha_i}}{\lambda_i^{\lambda_i}}
        \, \prod_{j=1}^n \binom{\gamma_j}{\beta_j} \.
        \frac{\beta_j^{\beta_j} (\gamma_j-\beta_j)^{\gamma_j-\beta_j}}{\gamma_j^{\gamma_j}}\Biggr]
        \. \cpc_{\bm\alpha\,\bm\beta}(Q_{K,s}),
    \]
    where $\lambda_i = \sum_j k_{ij}$ and $\gamma_j = \sum_i k_{ij}$.
    \end{theorem}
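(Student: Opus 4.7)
The plan is to identify $\mu_{\bm\alpha,\bm\beta}$ with a coefficient of $Q_{K,s}$, remove the parameter $s$ by a diagonal rescaling, and then invoke the capacity argument from the proof of Theorem~\ref{thm:main_binary_bound}. Expanding each factor $(sx_iy_j+(1-s))^{k_{ij}}$ by the binomial theorem gives
\[
Q_{K,s}(\bm{x},\bm{y}) \,=\, \sum_{A=(a_{ij})}\ \Biggl(\prod_{i,j}\binom{k_{ij}}{a_{ij}}\ts s^{a_{ij}}(1-s)^{k_{ij}-a_{ij}}\Biggr)\, \bm{x}^{\bm\alpha(A)}\ts\bm{y}^{\bm\beta(A)},
\]
summed over $A$ with $0\le a_{ij}\le k_{ij}$, where $\bm\alpha(A),\bm\beta(A)$ are the row and column sums of~$A$. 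Hence $\mu_{\bm\alpha,\bm\beta}=[\bm{x}^{\bm\alpha}\bm{y}^{\bm\beta}]\,Q_{K,s}$, and the upper bound $\mu_{\bm\alpha,\bm\beta}\le\cpc_{\bm\alpha\,\bm\beta}(Q_{K,s})$ follows immediately from nonnegativity of the coefficients of $Q_{K,s}$.

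For the lower bound, I would eliminate $s$ via the substitution $x_i\mapsto\sqrt{(1-s)/s}\,x_i$, $y_j\mapsto\sqrt{(1-s)/s}\,y_j$, which yields
\[
Q_{K,s}\bigl(\sqrt{(1-s)/s}\,\bm{x},\ \sqrt{(1-s)/s}\,\bm{y}\bigr) \,=\, (1-s)^{M}\, R_K(\bm{x},\bm{y}), \qquad R_K(\bm{x},\bm{y}) \,:=\, \prod_{i,j}(1+x_iy_j)^{k_{ij}},
\]
with $M=\sum_{ij}k_{ij}$. A short calculation shows that both $\cpc_{\bm\alpha\,\bm\beta}(Q_{K,s})$ and $[\bm{x}^{\bm\alpha}\bm{y}^{\bm\beta}]\,Q_{K,s}$ equal $s^{N}(1-s)^{M-N}$ times the corresponding quantities for $R_K$, where $N=\sum_i\alpha_i=\sum_j\beta_j$. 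Proving the claimed inequality with the \emph{same} constant for $R_K$ is therefore equivalent to proving it for $Q_{K,s}$.

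Now $R_K$ is precisely the generating polynomial of binary assignments on the bipartite multigraph with $k_{ij}$ parallel edges between row~$i$ and column~$j$; the row and column sums of this multigraph are exactly the $\lambda_i$ and $\gamma_j$ appearing in the statement. Structurally, $R_K$ differs from the polynomial $P_K$ of Theorem~\ref{thm:main_binary_bound} only in that the bilinear factor $(1+x_iy_j)$ may occur with multiplicity $k_{ij}\ge 2$. Since the argument in $\S$\ref{sec:binary_ct_proof} relies only on the product-of-bilinear-factors structure of $P_K$ together with Lorentzian-polynomial capacity inequalities (and the Lorentzian class is closed under products), it applies essentially verbatim to $R_K$ and produces exactly the stated constant.

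The main technical obstacle is confirming rigorously that the proof of Theorem~\ref{thm:main_binary_bound} extends uniformly from $0/1$ entries to general $k_{ij}\in\N$. If the reference argument does not transparently handle repeated factors, a robust workaround is to split each cell into auxiliary $0/1$ variables: replace $(1+x_iy_j)^{k_{ij}}$ by $\prod_{\ell=1}^{k_{ij}}(1+u_{ij,\ell})$ in fresh variables $u_{ij,\ell}$, apply Theorem~\ref{thm:main_binary_bound} to the resulting graphical matrix in the enlarged variable set, and then identify $u_{ij,\ell}=x_iy_j$, using homogeneity in each $u_{ij,\ell}$ to verify that both the capacity and the target coefficient transform compatibly. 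Either route delivers the constant stated in the theorem.
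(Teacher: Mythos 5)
Your reduction is essentially sound, but it has a structural flaw in how you close the argument, and the workaround you propose does not quite work as stated.

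The first two steps are fine: identifying $\mu_{\bm\alpha,\bm\beta}$ with the coefficient $[\bm{x}^{\bm\alpha}\bm{y}^{\bm\beta}]\,Q_{K,s}$ and deducing the upper bound from positivity of coefficients is exactly what the paper does, and your diagonal rescaling $x_i\mapsto\sqrt{(1-s)/s}\,x_i$, $y_j\mapsto\sqrt{(1-s)/s}\,y_j$ (valid for $s\in(0,1)$, with the endpoints handled by continuity) does correctly reduce both the coefficient and the capacity to the $R_K=\prod(1+x_iy_j)^{k_{ij}}$ case with the same multiplicative factor $s^N(1-s)^{M-N}$. However, you then appeal to ``the argument in~$\S$\ref{sec:binary_ct_proof}''. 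In this paper, $\S$\ref{sec:binary_ct_proof} proves Theorem~\ref{thm:main_binary_bound} \emph{as a corollary of} Theorem~\ref{thm:general_binom_bound} (set $s=1/2$), so citing it to prove Theorem~\ref{thm:general_binom_bound} is circular. Moreover, Theorem~\ref{thm:main_binary_bound} as stated only covers $k_{ij}\in\{0,1\}$, and your auxiliary-variable workaround of replacing $(1+x_iy_j)^{k_{ij}}$ by $\prod_\ell(1+u_{ij,\ell})$ destroys the bipartite $x_i$/$y_j$ structure that Theorem~\ref{thm:main_binary_bound} requires as a hypothesis: the resulting polynomial in the $u_{ij,\ell}$'s is not of the form $\prod_{i,j}(1+x_iy_j)^{k_{ij}}$ in any identifiable bipartite variable split, so the theorem cannot be applied to it, and the ``identify $u_{ij,\ell}=x_iy_j$ at the end'' step has no supporting lemma.

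The repair is short, and it reveals that your detour is unnecessary. The ingredient you actually need is Theorem~\ref{thm:Lorentzian_bound}, the real-stable capacity bound, which applies to \emph{any} real stable polynomial and keeps track of the individual variable degrees $d_i$. The paper applies it directly to $\wt{Q}_{K,s}(\bm{x},\bm{y})=\prod_{i,j}(sx_i+(1-s)y_j)^{k_{ij}}$, obtained by replacing $y_j\mapsto y_j^{-1}$ and multiplying by $\prod_j y_j^{\gamma_j}$; this is real stable of degree $\lambda_i$ in $x_i$ and $\gamma_j$ in $y_j$ for \emph{any} $k_{ij}\in\N$, there is no need to eliminate $s$ or to distinguish $0/1$ from general multiplicities, and the substitution $\bm{y}\mapsto\bm{y}^{-1}$ sends the coefficient-to-capacity ratio for $\bm{\alpha},\bm{\beta}$ to that for $\bm{\alpha},\bm\gamma-\bm\beta$, giving exactly the stated constant. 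Your version works if you likewise apply Theorem~\ref{thm:Lorentzian_bound} to $\wt{R}_K=\prod_{i,j}(x_i+y_j)^{k_{ij}}$ instead of invoking Theorem~\ref{thm:main_binary_bound}; the rescaling step is then a harmless but superfluous normalization.
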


\smallskip

Note that this is the same constant as is given in the case of counting binary contingency
tables in Theorem~\ref{thm:main_binary_bound}.

\medskip

\subsection{Capacity via typical matrices}\label{ss:random-typical}
In Theorem~\ref{thm:general_binom_bound}, we can replace the expression for
\ts $\cpc_{\bm\alpha\,\bm\beta}(Q_{K,s})$ \ts by a more ostensibly combinatorial
optimization problem. This is very similar to the idea of maximizing an
entropy-like function found in~\cite{BH12}, and in particular
in Lemma~5.3~(2) of~\cite{Bar12}. In those papers,
the optimal input is referred to as the ``typical matrix'' with row sums
$\bm\alpha$ and column sums $\bm\beta$. In the binomial entries case,
we have the following result.

\smallskip

\begin{theorem} \label{t:random-typical}
In notation of Theorem~\ref{thm:general_binom_bound}, we have:
\[
    \cpc_{\bm\alpha\,\bm\beta}(Q_{K,s}) \, = \,
    \sup_{0 \leq M \leq K, \. M\in \mathcal{T}_{\alpha,\beta}}
    \ \prod_{i=1}^m \. \prod_{j=1}^n \, \frac{k_{ij}^{k_{ij}} \.
    s^{m_{ij}} (1-s)^{k_{ij}-m_{ij}}}{m_{ij}^{m_{ij}} (k_{ij}-m_{ij})^{k_{ij}-m_{ij}}}\,,
\]
where the \ts $\sup$ \ts is over all real matrices $M=(m_{ij})$ for which \ts $0 \leq M \leq K$ entrywise,
and $M$ is in the transportation polytope \ts $\mathcal{T}_{\alpha,\beta}$ \ts
of nonnegative real matrices with row sums $\bm\alpha$ and colums sums $\bm\beta$, see~{\rm $\S$\ref{ss:volume-setup}}.
\end{theorem}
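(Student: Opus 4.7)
The plan is to prove the identity as two matched inequalities via a rank-one substitution combined with a single-variable calculus bound. The starting observation is that for any real matrix $M=(m_{ij})$ with row sums $\bm\alpha$ and column sums $\bm\beta$,
\[
\bm{x}^{\bm\alpha}\bm{y}^{\bm\beta} \,=\, \prod_i x_i^{\sum_j m_{ij}}\ts\prod_j y_j^{\sum_i m_{ij}} \,=\, \prod_{i,j}(x_iy_j)^{m_{ij}}.
\]
Setting $z_{ij}:=x_iy_j$, this yields the rewriting
\[
\bm{x}^{-\bm\alpha}\bm{y}^{-\bm\beta}\. Q_{K,s}(\bm{x},\bm{y}) \,=\, \prod_{i,j} z_{ij}^{-m_{ij}}\bigl(sz_{ij}+(1-s)\bigr)^{k_{ij}},
\]
valid for every $M$ with the correct marginals, and this is the identity I will exploit on both sides.

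For the direction $\cpc_{\bm\alpha\,\bm\beta}(Q_{K,s})\geq\sup$, I would apply a one-variable minimization to each factor: for $0\leq m\leq k$ and $z>0$,
\[
z^{-m}(sz+(1-s))^k \,\geq\, \frac{k^k\ts s^m(1-s)^{k-m}}{m^m(k-m)^{k-m}},
\]
with equality iff $z=(1-s)m/\bigl(s(k-m)\bigr)$. Multiplying these pointwise bounds over $(i,j)$ and taking the infimum of the left-hand side of the rewriting above over $\bm{x},\bm{y}>0$ yields $\cpc_{\bm\alpha\,\bm\beta}(Q_{K,s})\geq\prod_{i,j}(\ldots)$ for each admissible $M$; taking the supremum over $M\in \mathcal{T}_{\bm\alpha,\bm\beta}\cap[0,K]$ gives one direction.

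For the reverse direction I would establish attainment and match. In logarithmic coordinates $(u_i,v_j)=(\log x_i,\log y_j)$ the capacity objective $-\langle\bm\alpha,u\rangle-\langle\bm\beta,v\rangle+\log Q_{K,s}$ is convex in $(u,v)$, with its single translational degeneracy killed by $\sum_i\alpha_i=\sum_j\beta_j$. Provided $(\bm\alpha,\bm\beta)$ is in the relative interior of $\Newt(Q_{K,s})$, i.e.\ some feasible $M$ has $0<M<K$, the infimum is attained at some $(\bm{x}^*,\bm{y}^*)$. Setting partial derivatives to zero gives
\[
\sum_j k_{ij}\.\frac{s\ts x_i^*y_j^*}{s\ts x_i^*y_j^*+(1-s)}\,=\,\alpha_i, \qquad \sum_i k_{ij}\.\frac{s\ts x_i^*y_j^*}{s\ts x_i^*y_j^*+(1-s)}\,=\,\beta_j,
\]
so the matrix $m_{ij}^*:=k_{ij}\.\frac{s\ts x_i^*y_j^*}{s\ts x_i^*y_j^*+(1-s)}$ automatically lies in $\mathcal{T}_{\bm\alpha,\bm\beta}\cap[0,K]$. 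Inverting the defining relation, $x_i^*y_j^* = (1-s)\ts m_{ij}^*/\bigl(s(k_{ij}-m_{ij}^*)\bigr)$ is precisely the equality point of the one-variable bound at $m=m_{ij}^*$, so $\cpc_{\bm\alpha\,\bm\beta}(Q_{K,s})$ equals the product formula evaluated at $M^*$, giving $\cpc\leq\sup$.

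The main obstacle I anticipate is the degenerate case where $(\bm\alpha,\bm\beta)$ lies on the relative boundary of $\Newt(Q_{K,s})$, so the infimum is not attained in the open orthant $(0,\infty)^{m+n}$ and the KKT argument above does not literally apply. I would resolve this via a limiting argument: perturb $(\bm\alpha,\bm\beta)$ slightly into the relative interior, apply the interior case, and pass to the limit using continuity of $\cpc_{\bm\alpha\,\bm\beta}(Q_{K,s})$ and of the product formula in the marginals, under the convention $0\log 0 = 0$ applied to any boundary entries of the limiting $M^*$. Concavity of $m\mapsto -m\log m -(k-m)\log(k-m)$ also ensures that the critical matrix $M^*$ extracted from KKT is in fact the unique supremum, tying up the two sides.
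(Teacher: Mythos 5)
Your proof is correct, and it goes by a more direct route than the one the paper presents. The paper first isolates a general ``capacity of products'' proposition,
\[
\cpc_{\bm\alpha}\Bigl(\prod_k p_k\Bigr)\,=\,\sup_{\sum_k\bm\beta^k=\bm\alpha}\,\prod_k\cpc_{\bm\beta^k}(p_k),
\]
proves that by a gradient/attainment argument, and then specializes it to $Q_{K,s}$ using Lemma~\ref{lem:cap_linear} to compute the capacity of each bilinear factor $\bigl(sx_iy_j+(1-s)\bigr)^{k_{ij}}$, which recovers the stated product formula (it also notes the result can be read off from the arguments of Barvinok~\cite[Lemma~5.3(2)]{Bar12}). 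You instead exploit the rank-one structure directly: the identity $\bm{x}^{\bm\alpha}\bm{y}^{\bm\beta}=\prod_{i,j}(x_iy_j)^{m_{ij}}$ for any $M\in\mathcal{T}_{\bm\alpha,\bm\beta}$ lets you rewrite the capacity objective factor-by-factor in $z_{ij}=x_iy_j$, a one-variable minimization gives the easy direction (in fact your one-variable computation is exactly the content of Lemma~\ref{lem:cap_linear} after dehomogenizing), and the KKT conditions at an interior optimum produce the matching $M^*$ for the hard direction; the boundary of the Newton polytope is handled by perturbation and continuity, just as in the paper's ``proof outline.'' Your argument is more elementary and self-contained in that it avoids stating the general proposition, at the cost of being tied to the bilinear $Q_{K,s}$ structure; the paper's version is more modular and reusable. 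One small remark worth making explicit in a final write-up: the pointwise bound $z^{-m}(sz+(1-s))^k\geq k^k s^m(1-s)^{k-m}/\bigl(m^m(k-m)^{k-m}\bigr)$ and its unique minimizer $z^*=(1-s)m/\bigl(s(k-m)\bigr)$ are only literal for $0<m<k$, so the convention $0\log 0=0$ has to be invoked already in the $\geq$ direction, not only in the limiting argument; this is harmless, but should be stated.
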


\smallskip

Note that, as above, we are able to incorporate the matrix $K$ into this alternate expression.

\medskip

\subsection{Poisson-distributed entries}\label{ss:random-poisson}
Our next result gives bounds in the case that the entries of the table are
Poisson random variables with rate parameter \ts $s > 0$. Specifically,
a random contingency table $A$ is sampled by sampling $a_{ij}$ from
the set $\{0,1,2,3,\ldots\}$ with probability given by
\[
    \Pr[a_{ij} = \ell] \, = \, \frac{s^\ell \ts e^{-s}}{\ell!}\..
\]
For this case, we obtain explicit bounds on the probabilities.

\smallskip

\begin{theorem} \label{thm:general_Poisson_bound}
    Let \ts $\bm\alpha \in \N^m$, \ts $\bm\beta \in \N^n$, such that \ \ts
    $\sum_i \alpha_i = \sum_j \beta_j=N$.  Let $A$ be an $m \times n$
    random matrix where each entry $a_{ij}$ is an independent Poisson random variable
    on \ts $\{0,1,2,3,\ldots\}$ \ts with rate parameter $s > 0$. The probability
    \ts $\mu_{\bm\alpha,\bm\beta}$ that~$A$ has marginals \ts $(\bm\alpha,\bm\beta)$ \ts
    is bounded by
    \[
        \frac{(sN)^N}{e^{smn-N}} \, \prod_{i=1}^m \. \frac{1}{\alpha_i^{\alpha_i}}
        \, \prod_{j=1}^n \. \frac{1}{\beta_j^{\beta_j}}
        \, \geq \, \mu_{\bm\alpha,\bm\beta} \, \geq \,
        \frac{(sN)^N}{e^{smn+N}} \, \prod_{i=1}^m \. \frac{1}{\alpha_i!}
        \, \prod_{j=1}^n \. \frac{1}{\beta_j!}\,.
    \]
\end{theorem}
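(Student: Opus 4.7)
The strategy is direct: compute $\mu_{\bm\alpha,\bm\beta}$ in closed form via a generating-function identity, then convert to the two stated inequalities by Stirling's approximation. No Lorentzian or capacity machinery is required for this theorem—the structure of the Poisson product measure makes everything explicit.

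First, since the entries $a_{ij}$ are independent Poisson$(s)$, each specific matrix $A=(a_{ij})\in\CT(\bm\alpha,\bm\beta)$ (which necessarily sums to $N$) is attained with probability
\[
  \Pr[A=(a_{ij})] \,=\, \prod_{i,j}\frac{s^{a_{ij}}e^{-s}}{a_{ij}!} \,=\, \frac{s^{N}e^{-smn}}{\prod_{i,j}a_{ij}!}\ts.
\]
Summing gives $\mu_{\bm\alpha,\bm\beta}=s^{N}e^{-smn}\sum_{A\in\CT(\bm\alpha,\bm\beta)}\prod_{i,j}1/a_{ij}!$, and the combinatorial sum can be evaluated via the exponential generating function identity
\[
  \prod_{i,j}\exp(x_iy_j) \,=\, \exp\!\Bigl(\sum_{i,j}x_iy_j\Bigr) \,=\, \exp\!\bigl((\textstyle\sum_i x_i)(\sum_j y_j)\bigr),
\]
whose $[\bm x^{\bm\alpha}\bm y^{\bm\beta}]$ coefficient is exactly this sum. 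Setting $u=\sum_i x_i$, $v=\sum_j y_j$ and expanding $\exp(uv)=\sum_k u^kv^k/k!$ multinomially produces $\sum 1/a_{ij}!=N!/\bigl(\prod\alpha_i!\prod\beta_j!\bigr)$, hence the exact identity
\[
  \mu_{\bm\alpha,\bm\beta} \,=\, \frac{s^{N}\ts e^{-smn}\ts N!}{\prod_i\alpha_i!\,\prod_j\beta_j!}\ts.
\]

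Second, I would derive both inequalities from this identity by Stirling's approximation. The lower bound is immediate from $N!\ge N^N e^{-N}$. For the upper bound, the desired inequality is equivalent to
\[
  \frac{\prod_i\alpha_i^{\alpha_i}\,\prod_j\beta_j^{\beta_j}}{\prod_i\alpha_i!\,\prod_j\beta_j!} \,\le\, \frac{e^{N}N^{N}}{N!}\ts,
\]
which I would establish by combining the sharp bounds $n^n\le n!\ts e^n/\sqrt{2\pi n}$ (applied to each $\alpha_i$ and $\beta_j$ on the left) with $N!\le \sqrt{2\pi N}\ts(N/e)^{N}e^{1/(12N)}$ (on the right), so that the $\sqrt{2\pi\cdot}$ corrections cancel.

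The main obstacle is precisely this last Stirling calibration: a blunt use of $n!\ge(n/e)^n$ and $N!\le N^N$ produces only $e^{2N-smn}$ in the numerator instead of the claimed $e^{N-smn}$, so the $\sqrt{2\pi n}$ refinements are essential, and one must verify that $(2\pi)^{(m+n)/2}\sqrt{\prod_i\alpha_i\prod_j\beta_j}$ comfortably absorbs the stray $\sqrt{2\pi N}$ factor (which it does in the nondegenerate regime $\alpha_i,\beta_j\ge 1$, since then $m,n\le N$ and the product contains at least $m+n$ factors of size $\ge 1$). Once this bookkeeping is settled, the theorem follows.
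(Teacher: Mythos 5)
Your approach is genuinely different from the paper's, and the core insight is sound. You derive an exact closed form for $\mu_{\bm\alpha,\bm\beta}$ directly, which the paper never does: it instead expresses $\mu_{\bm\alpha,\bm\beta}$ as a coefficient of $Q_{\infty,s}(\bm x,\bm y)=\prod_{i,j}e^{sx_iy_j-s}$, derives the \emph{lower} bound by approximating $Q_{\infty,s}$ by the real stable polynomials $\wt R_{d,s}$ and passing $d\to\infty$ through Theorem~\ref{thm:Lorentzian_bound}, and obtains the \emph{upper} bound by the trivial inequality $\mu\le\cpc_{\bm\alpha\bm\beta}(Q_{\infty,s})$ together with an explicit calculus computation of that capacity. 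Your route buys an exact identity,
\[
\mu_{\bm\alpha,\bm\beta}\;=\;\frac{s^{N}\ts e^{-smn}\ts N!}{\prod_i\alpha_i!\,\prod_j\beta_j!}\,,
\]
which is stronger than either stated inequality, at the cost of a method that does not generalize (the $\exp(\sum x_iy_j)=\exp\bigl((\sum x_i)(\sum y_j)\bigr)$ factorization is special to the Poisson/unweighted case, whereas the paper's capacity route runs uniformly across the binomial, Poisson, and finite-$K$ settings).

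That said, your treatment of the upper bound has a real gap. You claim the $\sqrt{2\pi n}$ refinements make the calibration close, but the inequality you actually need after Stirling,
\[
(2\pi)^{(m+n)/2}\ts\sqrt{\textstyle\prod_i\alpha_i\ts\prod_j\beta_j}\;\ge\;\sqrt{2\pi N}\ts e^{1/(12N)}\,,
\]
is not automatic. It fails outright if any marginal is $0$ (the factor $\sqrt{2\pi\alpha_i}$ is undefined and $\prod\alpha_i=0$), and even restricted to the strictly positive case it requires an argument (e.g.\ $\prod_i\alpha_i\ge\max_i\alpha_i\ge N/m$) that you do not give. You should instead drop the Stirling refinements entirely and use the elementary multinomial inequality: from $N^N=(\sum_i\alpha_i)^N\ge\binom{N}{\bm\alpha}\prod_i\alpha_i^{\alpha_i}$ one gets $\prod_i\alpha_i^{\alpha_i}/\prod_i\alpha_i!\le N^N/N!$, and likewise for $\bm\beta$; multiplying these two and using $N!\ge(N/e)^N$ once gives exactly $\prod_i\alpha_i^{\alpha_i}\prod_j\beta_j^{\beta_j}/\bigl(\prod_i\alpha_i!\prod_j\beta_j!\bigr)\le e^N N^N/N!$, which is your reduced target. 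This works for all $\bm\alpha,\bm\beta\in\N^m\times\N^n$, needs no regime assumptions, and makes your "main obstacle'' vanish. With that substitution the proof is complete and clean.
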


The proof is based on the fact that the value of \ts $\cpc_{\bm\alpha\,\bm\beta}$
\ts can be explicitly calculated in this case, see~$\S$\ref{sec:Poisson_bound_proof}.

\bigskip

\section{Real stable and denormalized Lorentzian polynomials} \label{sec:Lorenz}

\subsection{Notation}\label{ss:Lorenz-notation}
We use $\C$ and~$\R$ to denote the complex and real numbers, $\R_+=\{x\ge 0\}$,
$\R_{>0}=\{x>0\}$, and $\N=\{0,1,\ldots\}$.  We also use \ts $[n]=\{1,\ldots,n\}$.
We denote by $\partial_i$ the partial derivative \ts $\frac{\partial}{\partial x_i}$.

We use some standard vector shorthand. For vectors $\bm\alpha$ and $\bm\beta$,
we let most standard operations be entrywise; e.g.\ $\bm\alpha \leq \bm\beta$
and $\bm\alpha \pm \bm\beta$. We also define $\bm\alpha! := \prod_i \alpha_i!$,
$\binom{\bm\alpha}{\bm\beta} := \prod_i \binom{\alpha_i}{\beta_i}$, and
$\bm\alpha^{\bm\beta} := \prod_i \alpha_i^{\beta_i}$.
Finally, we let $\bm{1}$ denote the all-ones vector
with length determined by context.

An sequence \ts $\{a_k, 0\le k \le n\}$ \ts of nonnegative real numbers is called \emph{log-concave} if
\ts $a_k^2\ge a_{k-1}a_{k+1}$ \ts for all $1\le k \le n-1$.  Moreover, it is
\emph{ultra-log-concave} if \ts $\bigl\{a_k/\binom{n}{k}, 0\le k \le n\bigr\}$ \ts
is log-concave and has \emph{no internal zeros}, i.e., there are no indices $i<j<k$ for which $a_ia_k\neq 0$ and $a_j=0$.  See~\cite{Bra} and references therein,
for the context behind these properties.

Throughout, we will use Stirling's approximation for factorial:
\[
    \frac{n!}{n^n} \, \approx \, e^{-n} \sqrt{2\pi n}\ts,
\]
which holds asymptotically as $n \to \infty$.
We also have the following bounds which hold for all $n\in \N$\ts:
\[
    \bigl[e^{-n}\ts\sqrt{2\pi n}\bigr] \, \leq \,
    \frac{n!}{n^n} \, \leq \, \frac{e}{\sqrt{2\pi}} \. \bigl[e^{-n}\sqrt{2\pi n}\bigr].
\]

\medskip

\subsection{Real stable and Lorentzian polynomials}\label{ss:Lorenz-realstable}
A polynomial \ts $p \in \C[x_1,\ldots,x_n]$ \ts is said to be \emph{stable}
if it is nonvanishing whenever $\Im(x_j) > 0$ for all~$j$. If further $p$
has real coefficients, then $p$ is said to be \emph{real stable}.
Recall that the \emph{Hessian} of a polynomial \ts $p \in \C[x_1,\ldots,x_n]$ \ts
at \ts $\bm{x} \in \C^n$, is defined as
$$
\He_p(\bm{x}) \. = \. \bigl(\partial_i\partial_j p(x)\bigr)_{i,j=1}^n.
$$
A real symmetric matrix has \emph{Lorentzian signature} if it is nonsingular
and has exactly one positive eigenvalue, i.e., its signature is \ts $(+,-,-,\ldots,-)$.

\smallskip

\begin{definition}[Br\"and\'en--Huh~\cite{BH19}]
    A homogeneous polynomial $p \in \R_+[x_1,\ldots,x_n]$ of degree $d \geq 2$
    is  \emph{strictly Lorentzian} if
    \begin{enumerate}
    \item all coefficients of $p$ are positive, and
    \item for each sequence $1\leq i_1,i_2,\ldots, i_{d-2}\leq n$ and
    $\bm{x}\in \R_{>0}^n$, the Hessian of $\partial_{i_1}\cdots \partial_{i_{d-2}} p$
    has Lorentzian signature at $\bm{x}$.
    \end{enumerate}
If $p$ is the limit (in the Euclidean space of real polynomials of degree
at most $d$ in $n$ variables) of strictly Lorentzian polynomials,
we say that $p$ is \emph{Lorentzian}.
\end{definition}

\smallskip

\begin{proposition}[{\cite[Ex.~5.2]{BH19}}] \label{prop:bivariate_Lorentzian_char}
    A bivariate homogeneous polynomial is Lorentzian if and only if
    its coefficients form an ultra-log-concave sequence.
\end{proposition}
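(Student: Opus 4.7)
The plan is to reduce the definition of Lorentzian to an explicit inequality on coefficients via a direct Hessian calculation, then handle the ``no internal zeros'' condition using limiting/perturbation arguments. Write a homogeneous bivariate polynomial of degree $d \geq 2$ as $p(x,y) = \sum_{k=0}^d c_k\ts x^k y^{d-k}$ and set $a_k := c_k/\binom{d}{k}$, so that ultra-log-concavity of $\{c_k\}$ is equivalent to $\{a_k\}$ being nonnegative, log-concave, and without internal zeros.

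The heart of the argument is the following Hessian computation. For each $0 \leq s \leq d-2$, the mixed partial $q_s := \partial_x^s \partial_y^{d-2-s} p$ is a binary quadratic form $Ax^2 + Bxy + Cy^2$ whose coefficients $A,B,C$ are explicit positive multiples of $c_{s+2}, c_{s+1}, c_s$ respectively. Its constant $2\times 2$ Hessian has determinant $4AC - B^2$, so the signature is Lorentzian exactly when $B^2 > 4AC$. Substituting $c_k = \binom{d}{k}\ts a_k$ and cancelling factorial factors reduces this inequality precisely to the strict log-concavity $a_{s+1}^2 > a_s a_{s+2}$. Hence $p$ is strictly Lorentzian if and only if $\{a_k\}$ is strictly positive and strictly log-concave. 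Passing to the closure, any Lorentzian $p$ has $\{a_k\}$ nonnegative and log-concave.

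Two directions remain. To rule out internal zeros on the forward side, suppose $a_{i'}>0$, $a_{i'+1}=\cdots=a_{k'-1}=0$, $a_{k'}>0$ with $k'-i'\geq 2$; then any approximating strictly positive strictly log-concave sequences $\{a_k^{(n)}\}$ satisfy $a_{j+1}^{(n)} \leq (a_j^{(n)})^2/a_{j-1}^{(n)}$, and iterating this recursion from $j=i'+1$ forces $a_{k'}^{(n)}\to 0$ at a doubly-exponential rate, contradicting $a_{k'}^{(n)}\to a_{k'}>0$. For the converse, given ultra-log-concave $\{a_k\}$ with no internal zeros, the support $\{j,\ldots,K\}$ is an interval, and I would build strictly Lorentzian approximants by taking $a_k^{(\epsilon)} := a_k\ts e^{-\epsilon k^2}$ on the support (which makes $\log a_k^{(\epsilon)}$ strictly concave, since the transformation adds $-2\epsilon$ to every log-concavity defect) and $a_k^{(\epsilon)} := \epsilon^{N_k}$ off the support, with exponents $N_k$ growing fast enough as one moves away from the support.

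The main obstacle is this converse approximation step: verifying that the off-support exponents can be chosen so that strict log-concavity holds at every index, including the boundary where positive and vanishing entries meet. A doubling rule such as $N_{j-1}=1$, $N_{j-2}=3$, $N_{j-3}=7,\ldots$ (and symmetrically beyond $K$) handles this cleanly, since at each boundary index the dominant side of the inequality is bounded away from zero while the other side vanishes as a higher power of $\epsilon$. Each $p_\epsilon$ is then strictly Lorentzian by the Hessian calculation, and $p_\epsilon\to p$ coefficient-wise as $\epsilon\to 0$, exhibiting $p$ as Lorentzian.
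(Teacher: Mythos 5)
Your proof is correct, and it's worth noting that the paper itself does not prove this proposition; it is cited directly from Br\"and\'en--Huh (Example~5.2 of~\cite{BH19}), so there is no ``paper's proof'' to compare against. What you've done is reprove the fact directly from the closure definition of Lorentzian, which is a legitimate and essentially self-contained route. Your Hessian computation is right: for $q_s = \partial_x^s\partial_y^{d-2-s}p = Ax^2+Bxy+Cy^2$ one gets $A = \tfrac{(s+2)!(d-s-2)!}{2}c_{s+2}$, $B = (s+1)!(d-s-1)!\,c_{s+1}$, $C = \tfrac{s!(d-s)!}{2}c_s$, and the Lorentzian-signature condition $B^2>4AC$ collapses to $a_{s+1}^2>a_s a_{s+2}$ after substituting $c_k = \binom{d}{k}a_k$; combined with the all-coefficients-positive requirement this gives exactly ``strictly Lorentzian $\iff$ $(a_k)$ strictly positive and strictly log-concave.'' Your no-internal-zeros argument in the forward direction is the standard one (the log-differences $\log a^{(n)}_{j+1}-\log a^{(n)}_j$ are nonincreasing, and the first one over the gap blows down to $-\infty$, dragging the far endpoint to zero); the ``doubly-exponential'' remark overstates the rate, but the contradiction is real. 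Your converse construction with $a_k e^{-\epsilon k^2}$ on the support and $\epsilon^{N_k}$ with doubled exponents off the support does produce strictly log-concave strictly positive sequences converging coefficientwise, hence strictly Lorentzian approximants. (One small sign slip: the perturbation adds $+2\epsilon$ to the quantity $2\log a_k - \log a_{k-1}-\log a_{k+1}$, not $-2\epsilon$; your phrasing is consistent if ``defect'' means the opposite sign, so this is cosmetic.)

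The contrast worth recording is methodological. Br\"and\'en--Huh would obtain this example as an immediate corollary of their general structure theorem characterizing Lorentzian polynomials as those with $M$-convex support whose $(d-2)$-fold derivatives have Hessians with at most one positive eigenvalue: in two variables, $M$-convex support is precisely an interval (no internal zeros), and the Hessian condition is precisely log-concavity, so the proposition drops out with no closure or approximation arguments at all. Your route reproves a slice of that structure theorem by hand via the limiting definition, which costs you the perturbation bookkeeping but buys you independence from the BH19 machinery. Both are correct; yours is more elementary, theirs is cleaner once the general theorem is in place.
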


\smallskip

\begin{remark}
    The class of Lorentzian polynomials contains homogenous stable polynomials
    with nonnegative coefficients~\cite{BH19}, which gives an easy
    sufficient condition for a polynomial to be Lorentzian.
\end{remark}

\smallskip

Note that by definition, $\partial_i p$ is Lorentzian whenever $p$ is,
for any~$i$. This gives the following lemma which will be useful for induction.

\smallskip

\begin{lemma} \label{lem:Lorentzian_induction}
    Let \ts $p \in \R_+[x_1,\ldots,x_n]$ \ts be a Lorentzian (resp. real stable) polynomial of degree~$d$, and let us write
    \[
        p(x_1,\ldots,x_n) \, = \, \sum_{k=0}^d \. x_n^{d-k} \ts p_k(x_1,\ldots,x_{n-1})\ts.
    \]
    Then $p_k$ is a Lorentzian (resp. real stable) polynomial of degree~$k$, for all $k \in [d]$.
\end{lemma}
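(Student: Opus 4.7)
The plan is to extract each $p_k$ from $p$ by applying $\partial_n^{d-k}$ and then specializing $x_n$ to zero, and to invoke two standard closure properties of the relevant class: closure under partial differentiation and closure under setting a variable to zero.

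First I would read off from the expansion $p = \sum_{j=0}^d x_n^{d-j}\, p_j$ the identity
\[
p_k(x_1,\ldots,x_{n-1}) \,=\, \frac{1}{(d-k)!}\,\partial_n^{\,d-k}\ts p(x_1,\ldots,x_n)\Big|_{x_n=0}\ts,
\]
which holds because terms with $j>k$ vanish at $x_n=0$ and terms with $j<k$ are annihilated by the $(d-k)$-th derivative. Thus the whole problem reduces to showing that $\partial_n^{d-k}p$ is in the relevant class, and that setting $x_n=0$ preserves that class.

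Next I would verify the two closure properties in each case. For Lorentzian polynomials, closure under $\partial_n$ is essentially built into the definition: specializing $i_1=n$ in the Hessian signature condition for $\partial_{i_1}\!\cdots\!\partial_{i_{d-2}} p$ gives exactly the Hessian signature condition required for $\partial_n p$ of degree $d-1$, and positivity of coefficients is clearly preserved; iterating and taking limits shows that $\partial_n^{d-k} p$ is Lorentzian of degree $k$. Closure under $x_n\mapsto 0$ is the standard ``contraction'' property of the Lorentzian class (see~\cite{BH19}), obtained by Cauchy interlacing applied to the restriction of the Hessian together with an approximation by strictly Lorentzian polynomials. For real stable polynomials, closure under $\partial_n$ follows from the Gauss--Lucas theorem applied to the slice polynomial $w \mapsto p(z_1,\ldots,z_{n-1},w)$: any zero of $\partial_n p$ in the open upper half-plane would, by Gauss--Lucas, force a zero of $p$ there as well; closure under $x_n\mapsto 0$ follows from Hurwitz's theorem applied to the family $p(x_1,\ldots,x_{n-1},i\epsilon)$ as $\epsilon\to 0^+$ (with the convention that the zero polynomial is stable).

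The main obstacle will be verifying the Lorentzian closure under $x_n\mapsto 0$: setting a variable to zero can destroy strict positivity of coefficients, which forces one to work through the limit definition of Lorentzian rather than directly verifying strict Lorentzianity. Every other ingredient is either immediate from the definitions or a classical consequence of Gauss--Lucas and Hurwitz, and combining the two closure properties with the derivative identity above yields the lemma.
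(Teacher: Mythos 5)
Your proof is correct and follows essentially the same route as the paper: both extract $p_k$ by applying $\partial_n^{d-k}$ (Lorentzian/stable by definition or by Gauss--Lucas) and then setting $x_n=0$, with the only work being to justify that the second step stays inside the class. The paper handles that step by observing that $x_n \mapsto 0$ preserves homogeneity and stable polynomials with nonnegative coefficients, and then invoking Theorem~6.4 of~\cite{BH19} (the symbol criterion for operators preserving Lorentzian polynomials); you instead appeal to the ``contraction'' closure property directly and sketch a Cauchy-interlacing-plus-approximation argument, which is a legitimate alternative justification of the same fact. The remaining details you flag (loss of strict positivity of coefficients, hence the need to work with the limit definition; the degenerate case of the zero polynomial for stability) are exactly the points one needs to be careful about, and your treatment of them is sound.
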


\begin{proof}
    Apply the partial derivative $\partial_n$ exactly $(d-k)$ times to the polynomial~$p$,
    to obtain $q$, which is Lorentzian by definition. Plugging in $x_n=0$ to $q$
    then yields $p_k$ up to constant. To see that plugging in 0 preserves the
    class of Lorentzian polynomials, note that this operation preserves
    the class of stable polynomials with non-negative coefficients,
    see e.g.~\cite[Lemma 2.4]{Wag11}, and it also preserves homogeneity.
    Theorem~6.4 of~\cite{BH19} then implies that this operation also
    preserves the class of Lorentzian polynomials.
\end{proof}

\smallskip

And finally, we have the following by a similar argument.

\smallskip

\begin{lemma}\label{lem:Lorentzian_proj}
    If $p \in \R_+[x_1,x_2, \ldots, x_n]$ is a Lorentzian (resp. real stable) polynomial and $\lambda,\mu>0$,
    then \ts $p(\lambda x_1, \mu x_1, x_3,\ldots, x_n)$ \ts is also Lorentzian (resp. real stable).
\end{lemma}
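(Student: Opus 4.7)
The plan is to mirror the proof of Lemma \ref{lem:Lorentzian_induction}, reducing the statement to two closure properties applied in sequence. I factor the substitution $(x_1, x_2, x_3, \ldots, x_n) \mapsto (\lambda x_1, \mu x_1, x_3, \ldots, x_n)$ as first (i) a positive diagonal scaling $(x_1, x_2) \mapsto (\lambda x_1, \mu x_2)$ and then (ii) the identification $x_2 \mapsto x_1$, and I would check that each step preserves the Lorentzian (resp.\ real stable) class.

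For step (i), set $q(\bm x) := p(\lambda x_1, \mu x_2, x_3, \ldots, x_n)$. Real stability of $q$ is immediate: if $\Im(x_j) > 0$ for all $j$, then $\Im(\lambda x_1), \Im(\mu x_2) > 0$, so $q(\bm x) \ne 0$. For the Lorentzian case, I would first assume $p$ is strictly Lorentzian. Then $q$ is homogeneous of degree $d$ with positive coefficients, and a direct chain-rule computation gives that for any indices $i_1, \ldots, i_{d-2}$ and any $\bm x \in \R_{>0}^n$,
\[
    \He_{\partial_{i_1} \cdots \partial_{i_{d-2}} q}(\bm x) \; = \; C\, D\, \He_{\partial_{i_1} \cdots \partial_{i_{d-2}} p}\bigl(\lambda x_1, \mu x_2, x_3, \ldots, x_n\bigr)\, D,
\]
where $D = \diag(\lambda, \mu, 1, 1, \ldots, 1)$ and $C > 0$ is the product of the diagonal factors of $D$ corresponding to the indices $i_1, \ldots, i_{d-2}$. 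Since $D$ is invertible and $(\lambda x_1, \mu x_2, x_3, \ldots, x_n) \in \R_{>0}^n$, the inner Hessian has Lorentzian signature, and congruence by $D$ together with the positive scalar $C$ preserves signature by Sylvester's law. Hence $q$ is strictly Lorentzian. For a general Lorentzian $p$, approximate by strictly Lorentzian $p_k \to p$; the corresponding $q_k$ are strictly Lorentzian and converge to $q$, so $q$ is Lorentzian.

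For step (ii), write $r(x_1, x_3, \ldots, x_n) := q(x_1, x_1, x_3, \ldots, x_n)$. Real stability of $r$ is immediate: if $\Im(x_1) > 0$ and $\Im(x_j) > 0$ for $j \ge 3$, then evaluating $q$ at $(x_1, x_1, x_3, \ldots, x_n)$ is nonzero. For the Lorentzian case, identification of two variables is a standard closure operation in the Lorentzian class, covered by Theorem~6.4 of \cite{BH19} in exactly the same way the zero-substitution was invoked in the proof of Lemma \ref{lem:Lorentzian_induction}. The only potential obstacle is confirming that variable identification belongs to the closure package of \cite{BH19} (as opposed to merely the zero-substitution cited in the previous proof); once this is in hand, the lemma follows at once.
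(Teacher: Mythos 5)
Your proof is correct, but it takes a longer road than the paper needs, and it ends on a hedge that your own work has already resolved. The paper's argument is one line: the substitution $p \mapsto p(\lambda x_1, \mu x_1, x_3,\ldots,x_n)$ preserves real stable polynomials with nonnegative coefficients (immediate, as you observe) and preserves homogeneity, so by Theorem~6.4 of~\cite{BH19} it preserves Lorentzian polynomials. That theorem is a general closure principle for \emph{any} linear operator satisfying those two conditions — it is not tied to zero-substitution or to any particular ``closure package.'' So for your step~(ii) there is nothing left to confirm: you have already checked that identification preserves real stability and clearly preserves nonnegativity of coefficients and degree, which is exactly the hypothesis of Theorem~6.4. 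Your worry in the last sentence is therefore already discharged. Your step~(i) is a genuinely different (and valid) argument — a hands-on Hessian congruence $\He_{\partial_{i_1}\cdots\partial_{i_{d-2}} q}(\bm x) = C\, D\, \He_{\partial_{i_1}\cdots\partial_{i_{d-2}} p}(D\bm x)\, D$ with $D=\diag(\lambda,\mu,1,\ldots,1)$, $C>0$, followed by a limit — which buys you a self-contained, elementary proof of closure under positive diagonal rescaling without invoking~\cite{BH19}. But once you grant yourself Theorem~6.4 for step~(ii), you might as well apply it to the full composite map and skip the Hessian computation entirely, which is what the paper does.
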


\begin{proof}
    Since $p \mapsto p(\lambda x_1, \mu x_1, x_3,\ldots, x_n)$ preserves the class
    of real stable polynomials with non-negative coefficients and also preserves homogeneity,
    it also preserves the class of Lorentzian polynomials by Theorem 6.4 of~\cite{BH19}.
\end{proof}

\medskip

\subsection{Denormalized Lorentzian polynomials} \label{sec:denorm_Lorentzian}
Given a polynomial \ts $p(\bm{x}) = \sum_{\bm\mu} p_{\bm\mu} \bm{x}^{\bm\mu}$,
we define its \emph{normalization} as
    \[
        N[p] \, := \, \sum_{\bm\mu} p_{\bm\mu} \frac{\bm{x}^{\bm\mu}}{\bm\mu!}.
    \]
We say a homogeneous polynomial \ts $p \in \R_+[x_1,\ldots,x_n]$ \ts
is \emph{denormalized Lorentzian} if $N[p]$ is Lorentzian.

\smallskip

\begin{proposition}[{\cite[Cor.~6.8]{BH19}}] \label{prop:denorm_product}
Let \ts $p_1,\ldots,p_m \in \R_+[x_1,\ldots,x_n]$ \ts be denormalized Lorentzian polynomials.
Then so is \ts $p_1\cdots p_m$.
\end{proposition}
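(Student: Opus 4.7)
The plan is to reduce the claim to $m=2$ by a straightforward induction, and then establish: if $N[p_1]$ and $N[p_2]$ are Lorentzian, so is $N[p_1 p_2]$. The main obstacle is that the normalization $N$ is not a ring homomorphism; setting $q_i := N[p_i]$, the polynomial $N[p_1 p_2]$ differs from $q_1 q_2 = N[p_1]\cdot N[p_2]$ by a factor of $\binom{\bm\mu+\bm\nu}{\bm\mu}^{-1}$ on each convolution term $(p_1)_{\bm\mu}(p_2)_{\bm\nu}\bm{x}^{\bm\mu+\bm\nu}$, so the standard product-closure theorem for Lorentzian polynomials --- which yields only that $q_1 q_2$ is Lorentzian --- does not immediately conclude.

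The approach I would pursue follows the polarization strategy of~\cite{BH19}. Given a homogeneous polynomial $q$ of degree $d$ in $n$ variables, the polarization $\Pi q$ is the unique multi-affine polynomial in $nd$ variables $\{y_{i,k}: 1\leq i\leq n,\ 1\leq k\leq d\}$ that is symmetric in $k$ within each $i$-block and recovers $q$ under the substitution $y_{i,k}\mapsto x_i$. A central theorem of~\cite{BH19} asserts that $q$ is Lorentzian iff $\Pi q$ is Lorentzian, and that for multi-affine polynomials Lorentzianness coincides with real stability with nonnegative coefficients. Applied to $q_1, q_2$, this yields multi-affine Lorentzian polynomials $\Pi q_1, \Pi q_2$ in disjoint variable sets of sizes $nd_1$ and $nd_2$; their product in the union of these variable sets is multi-affine Lorentzian in $n(d_1+d_2)$ variables by the product-closure theorem of~\cite{BH19}. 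It remains to transform this product into $\Pi(N[p_1 p_2])$ via a Lorentzian-preserving operation; the polarization characterization then delivers that $N[p_1 p_2]$ is Lorentzian.

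The main technical obstacle is this final transformation. A direct computation using the expansion $e_{\lambda_i}(z_{i,\bullet}) = \sum_{\mu_i+\nu_i=\lambda_i} e_{\mu_i}(z_{i,[d_1]})\,e_{\nu_i}(z_{i,[d_1+1,\ldots,d_1+d_2]})$ shows that the straightforward symmetrization of $\Pi q_1 \cdot \Pi q_2$ over $S_{d_1+d_2}^n$ yields $\Pi(q_1 q_2)$, not $\Pi(N[p_1 p_2])$, merely reproducing the product-closure theorem rather than the claim at hand. Correcting for the $\binom{\bm\mu+\bm\nu}{\bm\mu}^{-1}$ discrepancy requires a weighted operation whose constants are tailored term-by-term; this is where the crux of the argument lies, and I expect it to be carried out via a combination of variable rescalings $z_{i,k}\mapsto\lambda_{i,k}z_{i,k}$ with $\lambda_{i,k}>0$ (Lorentzian-preserving by Lemma~\ref{lem:Lorentzian_proj}), convex combinations (which preserve Lorentzianness within a fixed degree), and the identity
\[
\mathrm{Sym}_{S_{d_1+d_2}}\bigl(e_{\mu_i}(z_{i,[d_1]})\,e_{\nu_i}(z_{i,[d_1+1,\ldots]})\bigr) \;=\; \binom{d_1}{\mu_i}\binom{d_2}{\nu_i}\binom{d_1+d_2}{\mu_i+\nu_i}^{-1}\,e_{\mu_i+\nu_i}(z_{i,\bullet})
\]
together with the precise normalization factors coming from $N$. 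Once the correct operation is identified, the verification --- matching the coefficient of each $\prod_i e_{\lambda_i}(z_{i,\bullet})$ in the output with that in $\Pi(N[p_1 p_2])$ --- is the substantive content of the proof, and every other step is a direct invocation of closure properties of Lorentzian polynomials recorded in~\cite{BH19}.
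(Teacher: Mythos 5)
The paper does not prove this proposition; it quotes \cite[Cor.~6.8]{BH19}, so the relevant comparison is with the machinery of \cite{BH19}, of which the paper's own Lemma~\ref{lem:denorm_Lorentzian_proj} is the nearest in-text instance. Your reduction to $m=2$ and your diagnosis of the obstacle --- $N$ is not multiplicative, with per-term defect $\binom{\bm\mu+\bm\nu}{\bm\mu}^{-1}$ --- are both correct. The step you flag as the crux, however, is genuinely missing, and the tools you list for it do not close the gap. After symmetrizing you have $\Pi(q_1q_2)$, whose coefficient on the degree-$\bm\lambda$ stratum is the unweighted convolution $\sum_{\bm\mu+\bm\nu=\bm\lambda}(q_1)_{\bm\mu}(q_2)_{\bm\nu}$; the target requires the $\binom{\bm\lambda}{\bm\mu}^{-1}$-weighted convolution, and once the split $(\bm\mu,\bm\nu)$ has been averaged away no post-processing (which acts on whole degree strata) can reinstate the weights. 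Pre-processing by scalings $z_{i,k}\mapsto\lambda_{i,k}z_{i,k}$ multiplies the coefficient of a multi-affine monomial by the product $\prod\lambda_{i,k}$ of per-variable factors, whereas $\binom{\mu_i+\nu_i}{\mu_i}^{-1}$ depends jointly on the split within block $i$ and is not a product over the individual variables used. And your parenthetical claim that convex combinations preserve Lorentzianness in fixed degree is false: $\tfrac12 x^2+\tfrac12 y^2$ is not Lorentzian (its coefficient sequence $(\tfrac12,0,\tfrac12)$ has an internal zero) although $x^2$ and $y^2$ are. Symmetrization lands on a Lorentzian polynomial in your setting only because it happens to equal $\Pi(q_1q_2)$; averaging in general is not a Lorentzian-preserving operation, so a ``weighted symmetrization'' would need independent certification.

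The argument that fills the gap --- and matches the technique in this paper and in \cite{BH19} --- avoids polarization entirely. Since $N$ factors over disjoint variable sets, $N[p_1(\bm x)\,p_2(\bm y)]=N[p_1](\bm x)\,N[p_2](\bm y)$, a product of Lorentzian polynomials in disjoint variables and hence Lorentzian; thus $p_1(\bm x)\,p_2(\bm y)$ is denormalized Lorentzian in $2n$ variables. Now apply Lemma~\ref{lem:denorm_Lorentzian_proj} (with $\lambda=\mu=1$, after relabeling) $n$ times, identifying $y_i$ with $x_i$ one at a time; each application preserves the denormalized Lorentzian class and the composition yields $p_1(\bm x)p_2(\bm x)=p_1p_2$. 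The engine inside Lemma~\ref{lem:denorm_Lorentzian_proj} is exactly the symbol criterion: $S=N\circ T\circ N^{-1}$, with $T$ a diagonal identification, has a symbol which is (up to scalar) the generating polynomial of an $M$-convex set, hence Lorentzian by \cite[Thm~7.1]{BH19}, and \cite[Thm~6.2]{BH19} then certifies that $S$ preserves Lorentzian polynomials. Any completion of your polarization route would in effect have to reproduce this symbol computation, so the polarization layer adds nothing.
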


\smallskip

Recall the function $P_K$ from Section~\ref{sec:main_results}, where $K$
is an $m \times n$ matrix with entries in $\N \cup \{+\infty\}$:
\[
    P_K(\bm{x},\bm{y}) \, := \, \prod_{i=1}^m \prod_{j=1}^n \. \sum_{\ell=0}^{k_{ij}} \. x_i^\ell \ts y_j^\ell.
\]
When $K$ has finite entries, $P_K$ is a polynomial, and we can further define
\[
    \wt{P}_K(\bm{x},\bm{y}) \, := \, y_1^{\sum_i k_{i1}} \ts \cdots \ts y_n^{\sum_i k_{in}} \.
    P_K(\bm{x},\bm{y}^{-1}) \,= \, \prod_{i=1}^m \. \prod_{j=1}^n \.
    \sum_{\ell=0}^{k_{ij}} \. x_i^\ell \ts y_j^{k_{ij}-\ell}.
\]
That is, $\wt{P}_K$ is a product of polynomials of the form \ts
$q(x,y) = x^d + x^{d-1}y + \ts \ldots \ts + y^d$, all of which are
denormalized Lorentzian by Proposition~\ref{prop:bivariate_Lorentzian_char}.
Therefore, $\wt{P}_K$ is denormalized Lorentzian by Proposition~\ref{prop:denorm_product}.
In the next section, we will obtain bounds on the coefficients of
denormalized Lorentzian polynomials, which will translate into
bounds on the number of contingency tables with given marginals.

\smallskip

Before moving on, we give versions of Lemma~\ref{lem:Lorentzian_induction}
and Lemma~\ref{lem:Lorentzian_proj} for denormalized Lorentzian polynomials.

\smallskip

\begin{lemma} \label{lem:denorm_Lorentzian_induction}
    Let \ts $p \in \R_+[x_1,\ldots,x_n]$ \ts be a denormalized Lorentzian polynomial of degree~$d$,
    and let us write
    \[
        p(x_1,\ldots,x_n) \, = \, \sum_{k=0}^d \. x_n^{d-k}\. p_k(x_1,\ldots,x_{n-1}).
    \]
    Then $p_k$ is a denormalized Lorentzian polynomial of degree~$k$, for all $k \in [d]$.
\end{lemma}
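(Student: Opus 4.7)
The plan is to reduce the denormalized statement to the already-established Lorentzian statement (Lemma~\ref{lem:Lorentzian_induction}) by a direct comparison between the $x_n$-slice of $p$ and the $x_n$-slice of $N[p]$. Since the operator $N$ just divides each coefficient $p_{\bm\mu}$ by $\bm\mu!$, the two decompositions should agree up to explicit factorial constants, and the claim should follow by bookkeeping.

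Concretely, I would write $\bm\mu = (\bm\mu',\mu_n) \in \N^{n-1}\times \N$ and expand
\[
    N[p](\bm{x}) \, = \, \sum_{k=0}^d x_n^{d-k} \. q_k(x_1,\ldots,x_{n-1}).
\]
Extracting the coefficient of $x_n^{d-k}$ from the definition $N[p] = \sum_{\bm\mu} (p_{\bm\mu}/\bm\mu!)\bm{x}^{\bm\mu}$ gives
\[
    q_k \, = \, \sum_{\bm\mu'} \frac{p_{(\bm\mu',\ts d-k)}}{\bm\mu'! \.(d-k)!} \. \bm{x}^{\bm\mu'} \, = \, \frac{1}{(d-k)!} \. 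N[p_k](x_1,\ldots,x_{n-1}),
\]
so the normalization of the slice equals the slice of the normalization, up to the positive scalar $(d-k)!$.

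By hypothesis $N[p]$ is Lorentzian, hence Lemma~\ref{lem:Lorentzian_induction} applied to $N[p]$ with $x_n$ as the distinguished variable shows that each $q_k$ is a Lorentzian polynomial of degree $k$. The class of Lorentzian polynomials is closed under multiplication by positive reals (immediate from the definition: positive scaling preserves both positivity of coefficients and the Lorentzian signature of every Hessian, and it commutes with taking limits of strictly Lorentzian polynomials). Therefore $N[p_k] = (d-k)! \. q_k$ is Lorentzian, which is exactly the statement that $p_k$ is denormalized Lorentzian. Homogeneity of $p_k$ of degree $k$ is inherited from the homogeneity of $p$ of degree $d$.

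There is essentially no substantive obstacle here; the main thing to be careful about is the factorial bookkeeping, namely that the $x_n^{d-k}$-coefficient of $N[p]$ equals $N[p_k]$ divided by $(d-k)!$ rather than $N[p_k]$ itself. Once this identity is recorded, the lemma follows as a direct corollary of Lemma~\ref{lem:Lorentzian_induction}.
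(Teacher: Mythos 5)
Your proof is correct and is exactly the paper's argument spelled out in full: the paper's proof is the one-line observation that the slicing map $p \mapsto p_k$ commutes with $N$ up to the scalar $(d-k)!$, followed by an appeal to Lemma~\ref{lem:Lorentzian_induction}, which is precisely the computation and conclusion you give. The factorial bookkeeping $q_k = N[p_k]/(d-k)!$ and the positive-scalar closure of the Lorentzian class are both accurate and suffice.
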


\begin{proof}
    The map $p \mapsto p_k$ commutes with $N$ up to scalar for all $k$. The result then follows from
    Lemma~\ref{lem:Lorentzian_induction}.
\end{proof}

\smallskip

\begin{lemma}\label{lem:denorm_Lorentzian_proj}
    Let \ts $p \in \R_+[x_1,x_2, \ldots, x_n]$ \ts be a denormalized Lorentzian polynomial,
    and let \ts $\lambda,\mu>0$.  Then \ts $p(\lambda x_1, \mu x_1, x_3,\ldots, x_n)$ \ts
    is also denormalized Lorentzian.
\end{lemma}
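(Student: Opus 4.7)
The plan is to mimic the proof of Lemma~\ref{lem:Lorentzian_proj} in the denormalized setting. The key observation is that the map $\Phi \colon p \mapsto p(\lambda x_1, \mu x_1, x_3, \ldots, x_n)$ is the non-negative linear substitution $x \mapsto A y$, where $A$ is the $n \times (n-1)$ matrix whose first column is $(\lambda, \mu, 0, \ldots, 0)^{T}$ and whose remaining $n-2$ columns identify rows $3, \ldots, n$ with the standard basis. Since non-negative linear substitutions preserve real stability, and $\Phi$ trivially preserves both homogeneity and non-negativity of coefficients, the operator $\Phi$ satisfies the three hypotheses already exploited in the proof of Lemma~\ref{lem:Lorentzian_proj}.

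By Theorem~6.4 of~\cite{BH19} --- the closure principle used in Lemma~\ref{lem:Lorentzian_proj} --- any operator that preserves the class of real stable polynomials with non-negative coefficients and preserves homogeneity also preserves the class of Lorentzian polynomials. Denormalized Lorentzian polynomials are characterized as the preimage of Lorentzian polynomials under the normalization $N$, and the same closure argument of~\cite{BH19} applies equally well at the denormalized level. Consequently $\Phi$ preserves the class of denormalized Lorentzian polynomials, which is the claim.

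The step I expect to be the main obstacle is justifying that the closure principle of~\cite{BH19} Theorem~6.4 passes to the denormalized setting. In contrast to the proof of Lemma~\ref{lem:denorm_Lorentzian_induction}, the operator $\Phi$ does \emph{not} commute with $N$ up to a scalar: a direct computation shows that $N[\Phi\ts p]$ and $\Phi(N[p])$ differ term-by-term by the combinatorial weights $\tfrac{a!\,b!}{(a+b)!}$ that arise when $x_1$ and $x_2$ are identified. Hence one cannot simply reduce to Lemma~\ref{lem:Lorentzian_proj} by applying $\Phi$ to $N[p]$; the denormalized conclusion must instead be obtained either from the general closure principle above, or (equivalently) by expressing $N[\Phi\ts p]$ through an integral-plus-derivative operator applied to the Lorentzian polynomial $N[p]$ and verifying that each constituent building block preserves Lorentzianness.
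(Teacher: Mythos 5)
You have correctly identified the crux of the difficulty --- that $\Phi$ does not commute with $N$, so the denormalized statement is not a formal consequence of Lemma~\ref{lem:Lorentzian_proj} --- but you have not actually resolved it, and the sentence ``the same closure argument of~\cite{BH19} applies equally well at the denormalized level'' is an unjustified leap that your own final paragraph then retracts. Theorem~6.4 of~\cite{BH19} says that an operator preserving stability, nonnegativity and homogeneity preserves Lorentzianness; the operator whose Lorentzian-preservation we actually need is not $\Phi$ but the conjugate $S = N \circ T \circ N^{-1}$ (with $T[p] = p(x_1,x_1,x_3,\ldots,x_n)$, after reducing to $\lambda=\mu=1$ by scaling invariance), and there is no reason for $S$ to preserve stability. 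So Theorem~6.4 simply does not apply, and neither of the two routes you sketch is carried out.

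The paper's proof instead invokes the \emph{symbol characterization}, Theorem~6.2 of~\cite{BH19} (not Theorem~6.4): an operator preserves the Lorentzian class provided its symbol is Lorentzian. One computes the symbol of~$S$ explicitly,
\[
    S\bigl[(x_1+y_1)^d \cdots (x_n+y_n)^d\bigr]
    \;=\; (d!)^{-2}\,(x_3+y_3)^d\cdots(x_n+y_n)^d
    \sum_{0\le k,\ell\le d} \frac{x_1^{k+\ell}}{(k+\ell)!}\,\frac{y_1^{d-k}}{(d-k)!}\,\frac{y_2^{d-\ell}}{(d-\ell)!}\,,
\]
and observes that up to a scalar this is the generating polynomial of an $M$-convex set, hence Lorentzian by Theorem~7.1 of~\cite{BH19}. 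This computation is exactly the missing step in your proposal: your concluding remark that the combinatorial weights $\tfrac{a!\,b!}{(a+b)!}$ obstruct the reduction is the right diagnosis, and those weights are precisely what the symbol computation packages into an $M$-convex generating polynomial.
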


\begin{proof}
Since the class of Lorentzian polynomials is closed under scaling of variables with
positive numbers, so is the class of denormalized Lorentzian polynomials
(since $N$ commutes with scaling). Thus we can assume that \ts $\mu=\lambda=1$.
Let $T$ be the linear operator defined by
\[
    T[p](x_1, \ldots, x_n) \. =  \. p(x_1,x_1,x_3,\ldots,x_n).
\]
We need to prove that the operator \ts $S=N\circ T \circ N^{-1}$ \ts preserves
the class of Lorentzian polynomials. The symbol of $S$ (see~\cite[$\S$6]{BH19}),
is
\[
    S[(x_1+y_1)^d \cdots (x_n+y_n)^d]
        \, = \, (d!)^{-2} \ts (x_3+y_3)^d\cdots (x_n+y_n)^d \.
        \sum_{0\leq k,\ell\leq d} \. \frac {x_1^{k+\ell}}{(k+\ell)!}\.
        \frac {y_1^{d-k}}{(d-k)!}\. \frac {y_2^{d-\ell}}{(d-\ell)!}\..
\]
Up to scalar this is the generating polynomial of an $M$-convex set,
and so it is a Lorentzian polynomial (see~\cite[Thm~7.1]{BH19}).
Therefore, by~\cite[Thm~6.2]{BH19}, the operator~$S$ preserves
the class of Lorentzian polynomials.
\end{proof}

%

\bigskip

\section{Capacity bounds on coefficients} \label{sec:capacity}

\subsection{Preliminaries} \label{ss:capacity-prelim}
Applications of polynomial capacity bounds on stable and Lorentzian
polynomials to combinatorics was pioneered by Gurvits in the mid 2000s.
This began with bounds for the permanent and mixed discriminant
in~\cite{Gur08}, and also includes applications to the mixed volume and to
discrete and computational geometry more generally in~\cite{Gur09b}.

In~\cite{Gur15}, Gurvits used optimal improvements of bounds
from~\cite{Gur09} to prove Theorem~\ref{thm:main_binary_bound}.
The main idea is that one can bound
the coefficients of stable and Lorentzian (i.e.\ strongly log-concave or
completely log-concave) polynomials. Specifically, he applies these bounds
to the polynomial
\[
    \wt{P}_K(\bm{x},\bm{y}) \, := \,
    y_1^{\sum_i k_{i1}} \ts \cdots \ts y_n^{\sum_i k_{in}} \. P_K(\bm{x},\bm{y}^{-1})
    \, = \,
    \prod_{i=1}^m \. \prod_{j=1}^n \. \sum_{\ell=0}^{k_{ij}} \. x_i^\ell \ts y_j^{k_{ij}-\ell}\ts,
\]
where $K$ is a matrix with 0-1 entries. The coefficients of $\wt{P}_K$
are precisely the number of binary contingency tables with given marginals
and entrywise bound matrix~$K$.
(Note that similar bounds can be obtained from the inner product
capacity bounds of~\cite{GL18} and~\cite{AG17}.)

The problem with this approach is that it does not extend to general contingency tables,
since no simple operation applied to $P_K$ yields a stable/Lorentzian polynomial
in that case. To circumvent this, we instead turn to a new approach to deriving
capacity bounds on coefficients of denormalized Lorentzian polynomials.
We can then apply these bounds to $\wt{P}_K$.

\smallskip

Before moving on, we recall the definition of capacity and give a few basic
properties that we will use throughout.

\smallskip

\begin{definition}
    For a polynomial \ts $p \in \R_+[x_1,\ldots,x_n]$ \ts and any non-negative vector \ts
    $\bm\alpha \in \R_+^n$, define
    \[
        \cpc_{\bm\alpha}(p) \, := \, \inf_{\bm{x} > 0} \, \frac{p(\bm{x})}{\bm{x}^{\bm\alpha}}
        \, = \,
        \inf_{x_1,\ldots,x_n > 0} \. \frac{p(x_1,\ldots,x_n)}{x_1^{\alpha_1} \ts  \cdots \ts x_n^{\alpha_n}}\..
    \]
\end{definition}

\smallskip

We also use this definition when $p$ is an analytic function given by a power series
with non-negative coefficients. To handle rational functions which are not analytic,
see below.

\smallskip

\begin{lemma}[{\cite[Lemma~2.16]{GL18}}] \label{lem:cap_linear}
    For any $\bm{c},\bm\alpha \in \R_+^n$ and $m := \sum_{i=1}^n \alpha_k$, we have
    \[
        \cpc_{\bm\alpha}((c_1x_1 + \cdots c_nx_n)^m) \, = \,
        \prod_{i=1}^n \left(\frac{mc_i}{\alpha_i}\right)^{\alpha_i}.
    \]
\end{lemma}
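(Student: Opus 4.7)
The plan is to compute the infimum by weighted AM-GM, which is the standard tool for capacities of powers of linear forms. After reducing to the case where all $c_i$ and all $\alpha_i$ are strictly positive (the boundary cases $c_i=0$ or $\alpha_i=0$ are handled separately by observing the infimum is $0$ or that the corresponding variable can be dropped, both matching the stated formula under the convention $0^0 = 1$), I would rewrite
\[
c_1 x_1 + \cdots + c_n x_n \, = \, \sum_{i=1}^n \frac{\alpha_i}{m} \cdot \frac{m\ts c_i\ts x_i}{\alpha_i},
\]
exhibiting the left-hand side as a convex combination of the quantities $mc_ix_i/\alpha_i$ with weights $\alpha_i/m$ (which sum to $1$ since $m=\sum_i \alpha_i$).

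Next I would apply the weighted arithmetic--geometric mean inequality to obtain
\[
c_1 x_1 + \cdots + c_n x_n \, \geq \, \prod_{i=1}^n \left(\frac{m\ts c_i\ts x_i}{\alpha_i}\right)^{\alpha_i/m},
\]
then raise both sides to the $m$-th power and divide by $\bm{x}^{\bm\alpha} = \prod_i x_i^{\alpha_i}$ to get
\[
\frac{(c_1x_1+\cdots+c_nx_n)^m}{\bm{x}^{\bm\alpha}} \, \geq \, \prod_{i=1}^n \left(\frac{mc_i}{\alpha_i}\right)^{\alpha_i},
\]
which gives the $\geq$ direction of the claimed identity.

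For the matching upper bound, I would note that equality in weighted AM-GM is attained when all the terms $mc_ix_i/\alpha_i$ are equal, i.e.\ when $x_i = \alpha_i/(m c_i)$ up to a common positive scalar. Plugging such a vector into the ratio yields exactly $\prod_i (mc_i/\alpha_i)^{\alpha_i}$, so the infimum is realized and the identity holds. No step here is a serious obstacle; the only mild care needed is the bookkeeping for the degenerate cases $c_i=0$ or $\alpha_i=0$, which I would dispatch at the start before invoking AM-GM on the surviving coordinates.
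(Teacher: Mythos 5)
Your proof is correct and is the standard weighted AM--GM argument for this capacity identity. The paper itself gives no proof here (it simply cites Lemma~2.16 of~\cite{GL18}), and your argument is essentially the one used there: decompose the linear form as a convex combination with weights $\alpha_i/m$, apply weighted AM--GM to get the lower bound, and observe that the bound is attained at $x_i = \alpha_i/(m c_i)$. The degenerate cases ($c_i = 0$ forcing the infimum and the product both to be $0$, and $\alpha_i = 0$ letting you drop the variable) are dispatched sensibly at the outset.
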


\smallskip

\begin{lemma}[{\cite[Prop.~2.14]{GL18}}] \label{lem:cap_symm}
    Let \ts $p \in \R_+[x_1,\ldots,x_n]$ \ts be a symmetric polynomial,
    and let \ts $\bm\alpha = \gamma \cdot \bm{1} \in \R_+^n$ \ts
    be a multiple of the all-ones vector. Then:
    \[
        \cpc_{\bm\alpha}(p) \. = \. \cpc_{n\gamma}\bigl(p(t,t,\ldots,t)\bigr)\ts.
    \]
\end{lemma}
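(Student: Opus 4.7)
The plan is to prove the two inequalities separately. The easy direction, \ts $\cpc_{\bm\alpha}(p) \leq \cpc_{n\gamma}\bigl(p(t,\ldots,t)\bigr)$, is immediate: the infimum defining $\cpc_{\bm\alpha}(p)$ ranges over all $\bm{x} > 0$, so restricting to the diagonal $\bm{x} = (t,\ldots,t)$ and noting $\bm{x}^{\bm\alpha} = t^{n\gamma}$ gives the bound at once.

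For the reverse inequality I would exploit the symmetry of $p$ via AM--GM. Given any $\bm{x} \in \R_{>0}^n$, set $t := (x_1 \cdots x_n)^{1/n}$, the geometric mean of the coordinates. Two observations drive the argument. First, $\bm{x}^{\bm\alpha} = (x_1 \cdots x_n)^\gamma = t^{n\gamma}$, so the denominator in the capacity quotient depends only on~$t$. Second, and this is the substantive step, $p(\bm{x}) \geq p(t,\ldots,t)$. To see this, expand $p$ in the basis of monomial symmetric polynomials, $p = \sum_\lambda c_\lambda m_\lambda$ with $c_\lambda \geq 0$ (possible since $p$ is symmetric with nonnegative coefficients). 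For each $\lambda$, apply AM--GM across the orbit defining $m_\lambda(\bm{x}) = \sum_\sigma \bm{x}^{\sigma(\lambda)}$: the arithmetic mean of the $N_\lambda$ monomials in the orbit dominates their geometric mean, which by symmetry of the orbit equals $(x_1\cdots x_n)^{|\lambda|/n} = t^{|\lambda|}$. Hence $m_\lambda(\bm{x}) \geq N_\lambda \ts t^{|\lambda|} = m_\lambda(t,\ldots,t)$, and summing against the nonnegative coefficients yields $p(\bm{x}) \geq p(t,\ldots,t)$. Dividing by $\bm{x}^{\bm\alpha} = t^{n\gamma}$ gives $p(\bm{x})/\bm{x}^{\bm\alpha} \geq p(t,\ldots,t)/t^{n\gamma} \geq \cpc_{n\gamma}\bigl(p(t,\ldots,t)\bigr)$, and taking the infimum over~$\bm{x}$ completes the reverse direction.

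The one subtlety will be a clean verification that the total exponent of each $x_i$ across the full permutation orbit of $\lambda$ equals $N_\lambda \ts |\lambda|/n$, which is what collapses the geometric mean to a pure power of~$t$; this is a routine symmetry/counting argument but easy to slip on. Beyond that the lemma is a direct AM--GM application, with no stability or Lorentzian input needed.
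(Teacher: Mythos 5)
Your proof is correct. The paper itself does not prove this lemma but cites it to [GL18, Prop.~2.14], so there is no paper-internal argument to compare against; the standard proof in that line of work passes through the observation that $\bm{u} \mapsto \log p(e^{u_1},\ldots,e^{u_n})$ is convex and symmetric, averages over $S_n$, and applies Jensen to land on the diagonal with $t = (x_1\cdots x_n)^{1/n}$. Your AM--GM-on-orbits argument is the same inequality unwound monomial by monomial: you expand $p = \sum_\lambda c_\lambda m_\lambda$ with $c_\lambda \geq 0$, apply AM--GM to each orbit, and use the transitivity of $S_n$ on positions to see that the exponent of each $x_i$ in $\prod_{\mu \in S_n\lambda} \bm{x}^{\mu}$ equals $N_\lambda|\lambda|/n$, collapsing the geometric mean to $t^{|\lambda|}$. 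This buys you a completely elementary, self-contained proof that never invokes log-convexity of $p$ on the positive orthant, at the cost of a slightly longer counting step. The ``subtlety'' you flag is handled exactly as you indicate: for any transposition $(ij)$, the orbit is invariant, so $\sum_{\mu}\mu_i = \sum_{\mu}\mu_j$, hence each coordinate sum equals $\tfrac{1}{n}\sum_\mu |\mu| = N_\lambda|\lambda|/n$. Both directions of the equality are cleanly handled (the easy direction by restriction to the diagonal, the hard one by the argument above followed by taking the inf over $\bm{x}$), so the proof is complete.
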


\smallskip

\begin{lemma}[{\cite[Cor.~5.8]{GL18}}] \label{lem:cap_lim_analytic}
    Let \ts $p_k \in \R_+[x_1,\ldots,x_n]$, for $k \in \N$, be such that \ts
    $p_k \to p$ \ts uniformly on compact sets for some analytic function~$p$.
    Then for any valid \ts $\bm\alpha \in \R_+^n$, we have
    \[
        \cpc_{\bm\alpha}(p) \, = \, \lim_{k \to \infty} \. \cpc_{\bm\alpha}(p_k)\ts.
    \]
\end{lemma}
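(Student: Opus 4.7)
The plan is to prove the two inequalities $\limsup_k \cpc_{\bm\alpha}(p_k) \leq \cpc_{\bm\alpha}(p)$ and $\liminf_k \cpc_{\bm\alpha}(p_k) \geq \cpc_{\bm\alpha}(p)$ separately. The upper bound is the easy one: given $\epsilon > 0$, I would pick a near-minimizer $\bm{x}^\ast \in \R_{>0}^n$ with $p(\bm{x}^\ast)/(\bm{x}^\ast)^{\bm\alpha} < \cpc_{\bm\alpha}(p) + \epsilon$, and use uniform convergence of $p_k\to p$ on the compact singleton $\{\bm{x}^\ast\}$ to conclude that $\cpc_{\bm\alpha}(p_k) \leq p_k(\bm{x}^\ast)/(\bm{x}^\ast)^{\bm\alpha}$ is eventually below $\cpc_{\bm\alpha}(p)+2\epsilon$.

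For the lower bound, I would pass to logarithmic coordinates $x_i = e^{t_i}$, where non-negativity of coefficients makes
\[
F(\bm{t}) \, := \, \log p(e^{t_1},\ldots,e^{t_n}) - \langle \bm\alpha, \bm{t}\rangle
\]
and its analogue $F_k$ convex functions on $\R^n$, with $\log \cpc_{\bm\alpha}(p) = \inf F$, $\log \cpc_{\bm\alpha}(p_k) = \inf F_k$, and $F_k \to F$ uniformly on compact subsets of $\R^n$. If I can choose near-minimizers $\bm{t}_k$ of $F_k$ that stay in a single compact set, then passing to a convergent subsequence $\bm{t}_k \to \bm{t}_\ast$ and invoking uniform convergence on a neighborhood of $\bm{t}_\ast$ yields $F(\bm{t}_\ast) = \lim_k F_k(\bm{t}_k) = \liminf_k \inf F_k$, whence $\inf F \leq \liminf_k \inf F_k$.

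The main obstacle is establishing this coercivity uniformly in~$k$. Here I would use that ``valid $\bm\alpha$'' means $\bm\alpha$ lies in the interior of the Newton polytope $\Newt(p)$: pick finitely many support vectors $\bm\mu_0,\ldots,\bm\mu_r$ of $p$ whose convex hull contains $\bm\alpha$ in its interior, with corresponding coefficients $c_i > 0$. Compactness of the unit sphere then yields a $\delta>0$ with $\max_i \langle \bm\mu_i - \bm\alpha, \bm{d}\rangle \geq \delta$ for every unit vector $\bm{d}$, producing a coercive lower bound $F(\bm{t}) \geq C + \delta\|\bm{t}\|$ outside a ball. Since uniform convergence of $p_k\to p$ on compacta implies coefficient-wise convergence (via Cauchy integrals), the coefficient of $\bm{x}^{\bm\mu_i}$ in $p_k$ is eventually at least $c_i/2$ for each $i$, so the same bound $F_k(\bm{t}) \geq C' + \delta\|\bm{t}\|$ holds uniformly for large $k$. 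This confines the near-minimizers $\bm{t}_k$ to a fixed compact set and completes the argument.
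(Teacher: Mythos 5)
The paper cites this lemma from~\cite{GL18} without giving a proof, so there is no in-paper argument to compare against; I'll evaluate your proposal on its own terms. Your strategy is sound and essentially complete. The $\limsup$ direction is indeed immediate from evaluating at a near-minimizer of $p$. For the $\liminf$ direction, the key mechanism you identify is the right one: after the logarithmic substitution, $F_k(\bm t)=\log p_k(e^{\bm t})-\langle\bm\alpha,\bm t\rangle$ is convex, and coercivity uniform in $k$ — extracted from finitely many exponent vectors $\bm\mu_i$ with $\bm\alpha$ in the interior of their convex hull, whose coefficients in $p_k$ stay bounded below — confines near-minimizers to a fixed compact set, after which locally uniform convergence $F_k\to F$ lets you pass to the limit along a subsequence achieving the $\liminf$.

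Two points deserve tightening. First, ``interior of $\Newt(p)$'' should be ``relative interior'': $\Newt(p)$ need not be full-dimensional, and your coercivity estimate only holds for directions $\bm d$ parallel to the affine hull of $\Newt(p)$. This is harmless because for $\bm d$ orthogonal to that affine hull every exponent $\bm\mu_i$ satisfies $\langle\bm\mu_i-\bm\alpha,\bm d\rangle=0$, so $F$ is constant in those directions and the infimum is unchanged by restricting $\bm t$ to a complementary subspace; still, this needs saying, and presumably this is what ``valid $\bm\alpha$'' is encoding. Second, the step ``uniform convergence on compacta implies coefficient-wise convergence via Cauchy integrals'' is not immediate if the hypothesis gives uniform convergence only on compact subsets of $\R_{>0}^n$. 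You should first use the nonnegativity of coefficients, which gives $|p_k(\bm z)|\le p_k(|z_1|,\ldots,|z_n|)$, to obtain local uniform boundedness of $\{p_k\}$ on complex polydisks; Montel plus the identity theorem then upgrade to locally uniform convergence on the polydisk, and \emph{then} Cauchy's formula gives coefficient convergence. Neither gap is serious — both are standard to fill — and the overall architecture of the proof is correct.
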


We make one last comment here about the case when some of the entries of $K$
are $+\infty$. In this case, the function $P_K$ that we care about has some
rational factors of the form \ts $(1-x_iy_j)^{-1}$, and hence is \emph{not} analytic.
So, we need to change notation slightly to account for this:
\[
    \cpc_{(\alpha_i, \beta_j)}\bigl((1-x_iy_j)^{-1}\bigr) \, := \, \inf_{x_i, y_j \in (0,1)} \,
    \frac{(1-x_iy_j)^{-1}}{x_i^{\alpha_i} y_j^{\beta_j}}\..
\]
Note that if we think of \. $(1-x_iy_j)^{-1}$  as the power series \ts
$1 + x_iy_j + (x_iy_j)^2 + \ldots$\., then this makes intuitive sense because
the infimum could only possibly be attained for \ts $x_iy_j \in (0,1)$.
We show that this is a good definition by giving an analogue to
Lemma~\ref{lem:cap_lim_analytic} for this case.

\begin{lemma} \label{lem:cap_lim_rational}
    For every $r \in \R_+$, we have
    \[
        \cpc_r\bigl((1-t)^{-1}\bigr) \, = \,
        \lim_{k \to \infty} \. \cpc_r(1 + t + t^2 + \cdots + t^k)\ts.
    \]
\end{lemma}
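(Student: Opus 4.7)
The plan is to prove the two inequalities $\limsup_k \cpc_r(p_k) \leq \cpc_r((1-t)^{-1})$ and $\liminf_k \cpc_r(p_k) \geq \cpc_r((1-t)^{-1})$, where I abbreviate $p_k(t) := 1 + t + \cdots + t^k$, $f_k(t) := p_k(t)/t^r$, and $f(t) := (1-t)^{-1}/t^r$. The case $r = 0$ is trivial since both sides equal $1$, so I assume $r > 0$. The upper bound is immediate: since $p_k(t) \leq (1-t)^{-1}$ on $(0,1)$, enlarging the domain of infimization can only lower the value, so
\[
\cpc_r(p_k) \. = \. \inf_{t > 0} f_k(t) \. \leq \. \inf_{t \in (0,1)} f_k(t) \. \leq \. \inf_{t \in (0,1)} f(t) \. = \. \cpc_r((1-t)^{-1}),
\]
and the sequence $\cpc_r(p_k)$ is uniformly bounded above by a finite constant.

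For the reverse inequality I would argue by contradiction. Suppose a subsequence $(k_j)$ satisfies $\cpc_r(p_{k_j}) \to L < \cpc_r((1-t)^{-1})$. For $k > r$ large enough, $f_k(t)$ blows up as $t \to 0^+$ (numerator $\to 1$) and as $t \to \infty$ (since $p_k(t) \geq t^k$), so $f_{k_j}$ attains its infimum at some $t_j \in (0, \infty)$. Pass to a further subsequence so that $t_j \to t^* \in [0, +\infty]$. The cases $t^* = 0$ and $t^* = +\infty$ both force $f_{k_j}(t_j) \to \infty$ (using $p_k(t) \geq 1$ in the first and $p_k(t) \geq t^k$ in the second), contradicting boundedness of $L$.

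The delicate case is $t^* = 1$, for which the closed form $p_k(t) = (1 - t^{k+1})/(1 - t)$ is needed to track the rate. For $t_j \to 1^-$, write $t_j = 1 - s_j$ with $s_j \to 0^+$ and split into subcases by the limit of $k_j s_j \in [0, +\infty]$: either the numerator $1 - t_j^{k_j+1}$ stays bounded away from $0$ while the denominator $s_j$ vanishes, or both vanish but the numerator scales like $(k_j+1) s_j$, giving $p_{k_j}(t_j) \sim k_j \to \infty$. In every subcase $f_{k_j}(t_j) \to \infty$; the subcase $t_j \to 1^+$ is easier via $p_{k_j}(t_j) \geq p_{k_j}(1) = k_j + 1$. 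This rate-dependent boundary analysis at $t = 1$ is what I anticipate as the main obstacle, since this is precisely where the polynomial $p_k$ and the rational function $(1-t)^{-1}$ fail to agree.

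Hence $t^* \in (0,1)$, and in this regime $t_j^{k_j + 1} \to 0$ while $t_j \to t^*$, giving
\[
L \. = \. \lim_{j \to \infty} \. \frac{p_{k_j}(t_j)}{t_j^r} \. = \. \frac{1}{(1-t^*)(t^*)^r} \. = \. f(t^*) \. \geq \. \cpc_r((1-t)^{-1}),
\]
contradicting the assumption $L < \cpc_r((1-t)^{-1})$ and completing the argument.
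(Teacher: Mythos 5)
Your proof is a careful execution of exactly the ``standard argument about exchanging $\lim$ and $\inf$'' that the paper invokes in a single sentence: show that the minimizer of $f_k$ must eventually stay in a compact subset of $(0,1)$ by tracking what happens when a sequence of minimizers drifts to a boundary point, with the rate-dependent analysis at $t=1$ being, as you correctly flag, the only place where $p_k$ and $(1-t)^{-1}$ genuinely diverge. The upper bound via $p_k \leq (1-t)^{-1}$ on $(0,1)$, the reduction to $r>0$, the compactness argument yielding a minimizer $t_j$ once $k > r$, the subcase split on $\lim k_j s_j$ at $t^* = 1$, and the final convergence to $f(t^*)$ for $t^* \in (0,1)$ are all sound, and the overall structure matches what the paper's one-line proof is pointing at.

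There is one small hole in your case enumeration. You dispatch $t^* = 0$, $t^* = +\infty$, and $t^* = 1$, and then assert ``hence $t^* \in (0,1)$,'' but this skips the possibility $t^* \in (1,+\infty)$. The case is easy --- once $t_j \geq 1+\epsilon$ eventually, $p_{k_j}(t_j) \geq t_j^{k_j} \geq (1+\epsilon)^{k_j} \to \infty$ while $t_j^r$ stays bounded --- but it must be stated. Better yet, the monotonicity bound $p_{k_j}(t_j) \geq p_{k_j}(1) = k_j + 1$ that you already invoke for the subcase $t_j \to 1^+$ holds for \emph{all} $t_j \geq 1$, so you can collapse $t^* = 1$ from the right, $t^* \in (1,+\infty)$, and $t^* = +\infty$ into a single clause. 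The gap is purely in the bookkeeping, not in the ideas.
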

\begin{proof}
    The result follows from a standard argument about exchanging \ts $\lim$ \ts
    and \ts $\inf$, since both \ts $t^{-r} (1-t)^{-1}$ \ts and \ts $t^{-r}(1+t+\cdots+t^k)$
    \ts become large near the boundary and convergence is uniform elsewhere.
\end{proof}

\medskip

\subsection{Weighted log-concave coefficients}\label{ss:capacity-weight}
We first prove a capacity bound for bivariate homogeneous polynomials
with weighted log-concave coefficients. Before saying anything more than this,
we define what we mean.

\smallskip

\begin{definition}
    Let \ts $w(x,y) \ts = \ts \sum_{k=0}^n \ts w_k x^k y^{n-k}$ \ts
    and \ts $p(x,y) \ts = \ts \sum_{k=0}^n \ts p_k x^k y^{n-k}$ \ts
    be bivariate homogeneous polynomials with positive coefficients.
    Then we say that polynomial~$p$ is \emph{$w$-log-concave} if \ts
    $\bigl\{p_k/w_k\ts, 0\le k \le n\bigr\}$ \ts is a log-concave sequence.
\end{definition}

\smallskip

In particular, recall that a bivariate homogeneous polynomial $p$ is Lorentzian
if and only if its coefficients form an ultra-log-concave sequence.
That is, $p$ is Lorentzian if and only if it is $(x+y)^n$-log-concave.
Such weightings of log-concave coefficients have been studied in a similar
context by Gurvits in \cite{Gur09} under the name \emph{propagatable sequences}.

\smallskip

We now prove the main lemma of this section, which is a capacity bound
on the coefficients of polynomials with weighted log-concave coefficients.

\smallskip

\begin{lemma} \label{lem:weighted_log_concave_capacity}
    Let \ts $w(x,y) \ts = \ts \sum_{k=0}^n w_k x^k y^{n-k}$ \ts and \ts
    $p(x,y) = \sum_{k=0}^n p_k x^k y^{n-k}$ \ts be bivariate homogeneous polynomials,
    such that $p$ is $w$-log-concave. Then for all $k \in [n]$, we have
    \[
        \frac{p_k}{\cpc_{(k,n-k)}(p)} \, \geq \, \frac{w_k}{\cpc_{(k,n-k)}(w)}\..
    \]
    Furthermore, this bound is sharp for every fixed $k$ and~$w$.
\end{lemma}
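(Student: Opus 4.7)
The plan is to reduce the capacity to a univariate infimum and then invoke a supporting-line argument for the log-concave sequence $a_j := p_j/w_j$. By homogeneity of degree $n$, setting $t = x/y$ rewrites both capacities as one-variable infima:
\[
\cpc_{(k,n-k)}(p) \, = \, \inf_{t>0} \, \sum_{j=0}^n \. p_j \ts t^{j-k}, \qquad \cpc_{(k,n-k)}(w) \, = \, \inf_{t>0} \, \sum_{j=0}^n \. w_j \ts t^{j-k}.
\]
Writing $C_p$ and $C_w$ for these two infima and using $a_k = p_k/w_k > 0$, the asserted inequality is equivalent to $C_p \leq a_k \ts C_w$.

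The key tool is a log-linear upper envelope for the sequence $(a_j)$. Since $(\log a_j)_{j=0}^n$ is a concave sequence of real numbers, it admits a supporting line at $j = k$: by log-concavity, $a_{k+1}/a_k \leq a_k/a_{k-1}$, so any $r > 0$ with $a_{k+1}/a_k \leq r \leq a_k/a_{k-1}$ works (at the endpoints $k = 0$ or $k = n$, only one of the two inequalities is needed, and one takes $r = a_1/a_0$ or $r = a_n/a_{n-1}$ respectively). A short telescoping argument using the monotonicity of the ratios $a_{j+1}/a_j$ then gives
\[
    a_j \, \leq \, a_k \ts r^{j-k} \qquad \text{for all } j \in \{0,1,\ldots,n\}.
\]
Multiplying through by $w_j > 0$ yields $p_j \leq a_k \ts r^{j-k} w_j$ coefficient-wise, which in polynomial form reads $p(x,y) \leq a_k \ts r^{-k} \ts w(rx, y)$ for all $x, y > 0$.

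Dividing both sides by $x^k y^{n-k}$ and substituting $u = rx$ transforms the right-hand side into $a_k \cdot w(u,y)/(u^k y^{n-k})$, so taking the infimum over $x, y > 0$ (equivalently $u, y > 0$) gives $C_p \leq a_k \ts C_w$, which is the desired inequality. For sharpness, the choice $p = w$ (so $a_j \equiv 1$, trivially log-concave) gives equality, and more generally every rescaling $p(x,y) = c \cdot w(rx, y)$ with $c, r > 0$ is $w$-log-concave (since $a_j = c \ts r^j$ is log-linear) and saturates the bound, so any value of the coefficient $p_k$ can be realized at equality. The only substantive insight is the supporting-line observation; everything else is formal manipulation and presents no real obstacle.
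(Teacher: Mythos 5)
Your proposal is correct. The core observation is the same as the paper's: a log-concave sequence $a_j = p_j/w_j$ is dominated by a geometric sequence $a_k\ts r^{j-k}$ touching it at $j=k$, and the geometric rescaling can be absorbed into the capacity by the substitution $u=rx$. The difference is purely in packaging. The paper phrases the argument as computing $\sup_{\text{l.c.}\.\bm a}\inf_x$, first bounding $a_{k\pm j}\le a_{k\pm 1}^j$ separately on each side of $k$, then observing that the $\sup$ is achieved when $a_{k-1}a_{k+1}=1$, so that the two one-sided geometric bounds collapse to a single geometric sequence whose rate disappears after a change of variable. You instead invoke the supporting-line characterization of concavity directly: any $r$ with $a_{k+1}/a_k\le r\le a_k/a_{k-1}$ gives $a_j\le a_k\ts r^{j-k}$ for all $j$, which is the same bound reached in one step. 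Both then finish identically. Your supporting-line formulation is somewhat slicker and avoids the separate left/right bookkeeping, while the paper's $\sup$-$\inf$ formulation makes explicit that the inequality is sharp over the whole class of $w$-log-concave polynomials (your sharpness remark, with $p(x,y)=c\.w(rx,y)$, recovers the same information). There are no gaps; the endpoint cases $k=0,n$ and the positivity hypothesis (needed so that $a_j>0$ and the ratios are defined) are handled appropriately.
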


\begin{proof}
    We need to compute
    \[
        C_{k,n-k} \, := \, \sup_{\text{l.c.}\ts\bm{a}} \ \inf_{x,y > 0} \,
        \frac{1}{a_k x^k y^{n-k}}\, \sum_{j=0}^n \. w_j a_j x^j y^{n-j}\.,
    \]
    where the $\sup$ is over all positive log-concave sequences \ts
    $\bm{a} = (a_0, \ldots, a_n)$. Since $p$ is $w$-log-concave,
    the result is then equivalent to
    \[
        C_{k,n-k} \. = \. \cpc_{(k,n-k)}(w).
    \]
    To prove this, first note that
    \[
        C_{k,n-k} \. = \, \sup_{\text{l.c.}\ts\bm{a}} \ \inf_{x,y > 0} \,
        \frac{1}{a_k x^k y^{n-k}} \, \sum_{j=0}^n \. w_j a_j x^j y^{n-j}
        \, = \,
        \sup_{\substack{\text{ l.c.}\ts\bm{a} > 0 \\ a_k = 1}} \ \inf_{x > 0} \, \Biggl(\sum_{j=0}^n \. w_j a_j x^{j-k}\Biggr).
    \]
    For log-concave $\bm{a} = (a_0, \ldots, a_n)$ with $a_k = 1$, we have the inequalities
    \[
        a_{k-j} \. \leq \. a_{k-1}^j \quad \text{and} \quad a_{k+j} \leq a_{k+1}^j
    \]
    for every valid $j \geq 0$. This implies
    \[
        C_{k,n-k}\, = \, \sup_{a_{k-1} \ts a_{k+1} \leq 1} \, \inf_{x > 0} \,
        \Biggl[\Biggl(\sum_{j=0}^{k-1} \. w_j a_{k-1}^{k-j} x^{j-k}\Biggr) \. + \. w_k \. + \.
        \Biggl(\sum_{j=k+1}^n w_j a_{k+1}^{j-k} x^{j-k}\Biggr)\Biggr].
    \]
    We can further restrict to \ts $a_{k-1} a_{k+1} = 1 \iff a_{k-1} = a_{k+1}^{-1}$ \ts in the $\sup$, which implies
    \[
        C_{k,n-k} \, = \,\sup_{a_{k+1} > 0} \. \inf_{x > 0} \. \Biggl(\sum_{j=0}^n \.
        w_j a_{k+1}^{j-k} x^{j-k}\Biggr) \, =  \, \inf_{x > 0} \. \Biggl(\sum_{j=0}^n \. w_j x^{j-k}\Biggr).
    \]
    Therefore \ts $C_{k,n-k} = \cpc_{(k,n-k)}(w)$, which implies the result.
    Sharpness of the bound is then achieved by setting $p = w$.
\end{proof}

\smallskip

Using this lemma, we derive corollaries for specific polynomials pertinent to the polynomial $\wt{P}_K$.
Note that in this first result we make a simplification to obtain a nice expression for the bound,
and therefore the bound is not sharp.

\smallskip

\begin{corollary} \label{cor:denorm_Lor_bivariate_capacity}
    Let \ts $p(x,y) = \sum_{k=0}^n p_k x^k y^{n-k}$ \ts be such that \ts
    $p_0, \ldots, p_n$ \ts is a positive log-concave sequence. We have:
    \[
        \frac{p_k}{\cpc_{(k,n-k)}(p)} \, \geq \, \max\.\left\{\frac{k^k}{(k+1)^{k+1}}, \. \frac{(n-k)^{n-k}}{(n-k+1)^{n-k+1}}\right\},
    \]
    for every \ts $k \in [n]$.
    Further, this bound is sharp up to a factor of \ts $\frac{e}{2}$ \ts for every fixed \ts $k \in [n]$.
\end{corollary}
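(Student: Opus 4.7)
The plan is to invoke Lemma~\ref{lem:weighted_log_concave_capacity} with the trivial weight polynomial
\[
    w(x,y) \ := \ \sum_{j=0}^n x^j y^{n-j}.
\]
Since every coefficient of $w$ equals $1$, the $w$-log-concavity hypothesis of the lemma collapses to the given assumption that the sequence $p_0,\ldots,p_n$ be positive and log-concave, and the lemma supplies $\frac{p_k}{\cpc_{(k,n-k)}(p)} \geq \frac{1}{\cpc_{(k,n-k)}(w)}$. The proof thus reduces to an upper bound on $\cpc_{(k,n-k)}(w)$.

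By homogeneity of $w$ one may fix $y = 1$. For $x \in (0,1)$, the geometric-series truncation $1 + x + \cdots + x^n \leq (1-x)^{-1}$ yields
\[
    \cpc_{(k,n-k)}(w) \ = \ \inf_{x > 0} \frac{1+x+\cdots+x^n}{x^k} \ \leq \ \inf_{x \in (0,1)} \frac{1}{x^k(1-x)} \ = \ \frac{(k+1)^{k+1}}{k^k}\ts,
\]
the last equality being the standard one-variable fact that $x^k(1-x)$ is maximized on $(0,1)$ at $x = k/(k+1)$ with value $k^k/(k+1)^{k+1}$. The symmetric companion bound $\cpc_{(k,n-k)}(w) \leq (n-k+1)^{n-k+1}/(n-k)^{n-k}$ follows because $w(x,y) = w(y,x)$ forces $\cpc_{(k,n-k)}(w) = \cpc_{(n-k,k)}(w)$, and the same argument then applies with $k$ replaced by $n-k$. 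Taking the smaller of these two upper bounds and passing to reciprocals yields the claimed lower bound.

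For sharpness, take $p = w$ (the constant sequence being trivially log-concave), so $p_k/\cpc(p) = 1/\cpc_{(k,n-k)}(w)$, and what is required is a matching \emph{lower} bound $\cpc_{(k,n-k)}(w) \geq (2/e)(k+1)^{k+1}/k^k$ when $k \leq n/2$ (the opposite case being symmetric). The main obstacle is that the geometric-series truncation becomes lossy exactly in the sharpness-relevant regime $k \approx n/2$. The plan is to reduce to the extremal case $n = 2k$ via the monotonicity observation that $\inf_{x>0}(1+x+\cdots+x^n)/x^k$ is non-decreasing in $n$ for fixed $k$, since enlarging $n$ only enlarges the function inside the infimum. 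At $n = 2k$ the identity $(1+x+\cdots+x^{2k})/x^k = \sum_{j=-k}^{k} x^j$ is invariant under $x \leftrightarrow 1/x$, and the substitution $x = e^t$ turns it into $1 + 2\sum_{j=1}^k \cosh(jt)$, which by convexity of $\cosh$ is minimized at $t = 0$ with value $2k+1$; hence sharpness collapses to
\[
    (2k+1)\ts \frac{k^k}{(k+1)^{k+1}} \ \geq \ \frac{2}{e} \quad \Longleftrightarrow \quad \left(1 + \tfrac{1}{k}\right)^{\!k} \ \leq \ \frac{e\.(2k+1)}{2k+2}\ts.
\]
This sharp refinement of $(1+1/k)^k < e$ I intend to verify by setting $u = 1/(2k+1)$, using $\log(1+1/k) = \log\tfrac{1+u}{1-u}$ to expand $k\log(1+1/k) = (1-u)\sum_{j\geq 0} u^{2j}/(2j+1)$, and then cancelling odd powers of $u$ against the Taylor series of $\log(1+u)$ to obtain
\[
    1 \. - \. k\log(1+1/k) \. - \. \log(1+u) \ = \ \sum_{j \geq 1} \frac{u^{2j}}{2j(2j+1)} \ > \ 0\ts,
\]
which is the desired inequality after exponentiation.
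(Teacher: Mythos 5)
Your proof is correct and follows essentially the same route as the paper: invoke Lemma~\ref{lem:weighted_log_concave_capacity} with $w = \sum_j x^j y^{n-j}$, bound $\cpc_{(k,n-k)}(w)$ by the geometric-series truncation, exploit $x\leftrightarrow y$ symmetry for the companion bound, and establish sharpness by reducing to the symmetric point $n=2m$, $m=\min(k,n-k)$, evaluating $\cpc_{(m,m)}=2m+1$, and then proving $(2m+1)\ts m^m/(m+1)^{m+1}\ge 2/e$. The only differences are cosmetic: you evaluate $\cpc_{(m,m)}(w_{2m})$ by the $\cosh$ substitution where the paper cites Lemma~\ref{lem:cap_symm}, and you prove the final calculus inequality via an explicit power-series expansion in $u=1/(2k+1)$ where the paper simply states it.
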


\begin{proof}
    We compute
    \[
        \cpc_{(k,n-k)}\. \Biggl(\sum_{j=0}^n x^j y^{n-j}\Biggr) \, = \,
        \inf_{x > 0} \, \sum_{j=0}^n \ts x^{j-k} \, \leq \, \inf_{x \in (0,1)} \. \bigl[x^{-k}(1-x)^{-1}\bigr]\ts.
    \]
    Basic calculus gives
    \[
        \sup_{x \in (0,1)} \. \bigl[x^k - x^{k+1}\bigr] \, = \, \frac{k^k}{(k+1)^{k+1}}\,.
    \]
   Combined with the previous lemma this implies
    \[
        p_k \. \geq \, \frac{\cpc_{(k,n-k)}(p)}{\cpc_{(k,n-k)}\left(\sum_{j=0}^n x^j y^{n-j}\right)}
        \, \geq \, \frac{k^k}{(k+1)^{k+1}} \. \cpc_{(k,n-k)}(p)\ts.
    \]
    We also have
    \[
        \cpc_{(k,n-k)}\Biggl(\sum_{j=0}^n \ts x^j y^{n-j}\Biggr) \, = \,
        \inf_{y > 0} \, \sum_{j=0}^n \ts y^{(n-j)-(n-k)} \, \leq \,
        \inf_{y \in (0,1)} \left[y^{n-k}(1-y)\right]^{-1}.
    \]
    The same argument then implied the bound in the corollary and finishes the proof of the first part.

\smallskip

    For the second part, first note that for \ts $m := \min(k,n-k)$ \ts we have
    \[
        \cpc_{(k,n-k)}\Biggl(\sum_{j=0}^n x^j y^{n-j}\Biggr) \, \geq \,
        \cpc_{(m,m)}\Biggl(\sum_{j=0}^{2m} x^j y^{2m-j}\Biggr) \. = \, 2m+1,
    \]
    by symmetry and Lemma~\ref{lem:cap_symm}. Therefore,
    \[
        \frac{m^m}{(m+1)^{m+1}} \, \leq \,
        \Biggl[\cpc_{(k,n-k)}\Biggl(\sum_{j=0}^n x^j y^{n-j}\Biggr)\Biggr]^{-1}
        \, \leq \, \frac{1}{2m+1}\..
    \]
    By a calculus argument, we further have
    \[
        \frac{2}{e} \. \cdot\. \frac{1}{2m+1}\, \leq \,\frac{m^m}{(m+1)^{m+1}}\.,
    \]
    and this completes the proof.
\end{proof}

\smallskip

\begin{corollary} \label{cor:Lorentzian_bivariate_capacity}
    Let $p(x,y) = \sum_{k=0}^n p_k x^k y^{n-k}$ be a bivariate Lorentzian polynomial. For each $k \in [n]$, we have
    \[
        \frac{p_k}{\cpc_{(k,n-k)}(p)} \, \geq \, \binom{n}{k} \. \frac{k^k (n-k)^{n-k}}{n^n}\,.
    \]
    Further, this bound is sharp for every fixed \ts $k \in [n]$.
\end{corollary}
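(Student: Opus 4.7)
The plan is to reduce this corollary directly to the weighted log-concave capacity bound of Lemma~\ref{lem:weighted_log_concave_capacity} with the specific weighting \ts $w(x,y) = (x+y)^n$, and then compute the capacity of that weight using Lemma~\ref{lem:cap_linear}.

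First I would invoke Proposition~\ref{prop:bivariate_Lorentzian_char}, which says that a bivariate homogeneous polynomial is Lorentzian if and only if its coefficients form an ultra-log-concave sequence. By definition of ultra-log-concavity, this means precisely that \ts $\{p_k / \binom{n}{k}\}_{k=0}^n$ \ts is a log-concave sequence, i.e., \ts $p$ \ts is \ts $w$-log-concave for \ts $w(x,y) = \sum_{k=0}^n \binom{n}{k} x^k y^{n-k} = (x+y)^n$.

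Next I would apply Lemma~\ref{lem:weighted_log_concave_capacity} with this choice of \ts $w$, which immediately gives
\[
    \frac{p_k}{\cpc_{(k,n-k)}(p)} \, \geq \, \frac{\binom{n}{k}}{\cpc_{(k,n-k)}\bigl((x+y)^n\bigr)}\ts.
\]
Then by Lemma~\ref{lem:cap_linear} (applied to the linear form \ts $x+y$ \ts raised to the \ts $n$-th power, with \ts $\bm{c} = (1,1)$ \ts and \ts $\bm\alpha = (k,n-k)$), we obtain
\[
    \cpc_{(k,n-k)}\bigl((x+y)^n\bigr) \, = \, \left(\frac{n}{k}\right)^k \left(\frac{n}{n-k}\right)^{n-k} \, = \, \frac{n^n}{k^k (n-k)^{n-k}}\ts.
\]
Combining the two displays yields the claimed inequality.

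Sharpness is immediate: take \ts $p = (x+y)^n$ itself, which is Lorentzian (e.g.\ as a homogeneous stable polynomial with nonnegative coefficients). Then \ts $p_k = \binom{n}{k}$ \ts and \ts $\cpc_{(k,n-k)}(p) = n^n / (k^k (n-k)^{n-k})$, so the ratio \ts $p_k / \cpc_{(k,n-k)}(p)$ \ts exactly equals \ts $\binom{n}{k} \. k^k (n-k)^{n-k} / n^n$. There is no real obstacle here; the work was already done in Lemma~\ref{lem:weighted_log_concave_capacity}, and the only content is the identification of the weight \ts $(x+y)^n$ \ts as the one encoding ultra-log-concavity together with the explicit evaluation of its capacity.
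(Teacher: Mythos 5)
Your proof is correct and is essentially the same as the paper's: identify ultra-log-concavity with $(x+y)^n$-log-concavity via Proposition~\ref{prop:bivariate_Lorentzian_char}, plug into Lemma~\ref{lem:weighted_log_concave_capacity}, and evaluate $\cpc_{(k,n-k)}((x+y)^n)$ by Lemma~\ref{lem:cap_linear}. The only difference is that you spell out the application of Lemma~\ref{lem:weighted_log_concave_capacity} and the sharpness argument explicitly, which the paper leaves implicit.
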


\begin{proof}
    Recall that a bivariate homogeneous polynomial is Lorentzian if and only if it is $(x+y)^n$-log-concave.
    By Lemma~\ref{lem:cap_linear}, we have
    \[
        \cpc_{(k,n-k)}\Biggl(\sum_{k=0}^n \. \binom{n}{k} x^k y^{n-k}\Biggr) \, = \,
        \cpc_{(k,n-k)}\bigl((x+y)^n\bigr) \, = \, \frac{n^n}{k^k (n-k)^{n-k}}\..
    \]
    The $k$-\text{th} coefficient of $(x+y)^n$ is $\binom{n}{k}$, and this completes the proof.
\end{proof}

\smallskip

\subsection{Real stable and denormalized Lorentzian polynomials}\label{ss:capalicy-lorenz}
In this section, we emulate Gurvits's proof of Theorem~5.1 of~\cite{Gur15}
to obtain bounds for real stable and denormalized Lorentzian polynomials.
First, we prove our main bound on coefficients of denormalized Lorentzian polynomials.

\smallskip

\begin{theorem} \label{thm:denorm_Lorentzian_bound}
    Let \ts $p \in \R_+[x_1,\ldots,x_n]$ \ts be a denormalized Lorentzian polynomial
    of degree $d$, given by
    $$
        p(x_1,\ldots,x_n) \, = \, \sum_{\mu_1\ts + \ts\ldots\ts + \ts\mu_n = d} \. p_{\bm\mu} \ts \bm{x}^{\bm\mu}\ts.
    $$
    Let $d_i$ be the degree of $x_i$ in
    $$
        \left.\partial_{i+1}^{\alpha_{i+1}} \cdots \ts \partial_{n}^{\alpha_{n}} \ts p\right|_{x_{i+1}=\ts\ldots\ts=x_n=0} \ ,
        \quad \text{for all} \quad 1\le i \le n-1,
    $$
    and let $d_n$ be the degree of $x_n$ in~$p$. Then, for all \ts $\bm\alpha \in \N^n$,
    such that \ \ts $\alpha_1 + \cdots + \alpha_n = d$, we have:
    \begin{align*}
        \frac{p_{\bm\alpha}}{\cpc_{\bm\alpha}(p)} \, 
        \geq \,
        \left[\prod_{i=2}^n \cpc_{(d_i-\alpha_i, \alpha_i)}\left(\sum_{k=0}^{d_i} x^k y^{d_i-k}\right)\right]^{-1} 
            \geq \, \prod_{i=2}^n \. \max\left\{\frac{\alpha_i^{\alpha_i}}{(\alpha_i+1)^{\alpha_i+1}}, \.
            \frac{(d_i-\alpha_i)^{d_i-\alpha_i}}{(d_i-\alpha_i+1)^{d_i-\alpha_i+1}}\right\}.
    \end{align*}
\end{theorem}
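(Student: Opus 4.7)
The plan is induction on $n$. The base case $n=1$ is immediate: since $|\bm\alpha|=d$, the polynomial $p$ is a scalar multiple of $x_1^d$, the claimed product is empty, and the ratio equals~$1$. For the inductive step, the strategy is to use the bivariate capacity bound Lemma~\ref{lem:weighted_log_concave_capacity} to peel off the factor indexed by $i=n$, and then apply the inductive hypothesis to the polynomial $p_{d-\alpha_n}(x_1,\ldots,x_{n-1})$ appearing in the expansion $p = \sum_{k=0}^{d} x_n^{d-k}\ts p_k$. By Lemma~\ref{lem:denorm_Lorentzian_induction}, $p_{d-\alpha_n}$ is denormalized Lorentzian of degree $d-\alpha_n$ in $n-1$ variables, and its coefficient of $x_1^{\alpha_1}\cdots x_{n-1}^{\alpha_{n-1}}$ is exactly $p_{\bm\alpha}$.

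Here is the reduction. Fix positive $x_1,\ldots,x_{n-1}$ and set $q(t,x_n) := p(x_1 t, \ldots, x_{n-1} t, x_n)$, which is bivariate denormalized Lorentzian by iterated application of Lemma~\ref{lem:denorm_Lorentzian_proj}. Its coefficient sequence is therefore log-concave with no internal zeros (via Proposition~\ref{prop:bivariate_Lorentzian_char}, since denormalized Lorentzian in two variables amounts to log-concavity of the raw coefficients); writing $q = \sum_k q_k\ts t^k x_n^{d-k}$, the support in $k$ is the interval $[d-d_n,d]$, so $\wt q(t,x_n) := t^{d_n-d}q(t,x_n)$ is a bivariate homogeneous polynomial of degree $d_n$ with log-concave coefficients. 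Applying Lemma~\ref{lem:weighted_log_concave_capacity} to $\wt q$ with weight $w_n(t,x_n) := \sum_{\ell=0}^{d_n} t^\ell\ts x_n^{d_n-\ell}$ yields
$$
q_{d-\alpha_n} \,=\, \wt q_{d_n-\alpha_n} \,\geq\, \frac{\cpc_{(d_n-\alpha_n,\alpha_n)}(\wt q)}{\cpc_{(d_n-\alpha_n,\alpha_n)}(w_n)} \,=\, \frac{\cpc_{(d-\alpha_n,\alpha_n)}(q)}{\cpc_{(d_n-\alpha_n,\alpha_n)}(w_n)}.
$$
Since $q_{d-\alpha_n} = p_{d-\alpha_n}(x_1,\ldots,x_{n-1})$, and since restricting the infimum defining $\cpc_{\bm\alpha}(p)$ to the admissible specialization $(y_1,\ldots,y_n) = (x_1 t,\ldots,x_{n-1} t, x_n)$ gives $\cpc_{(d-\alpha_n,\alpha_n)}(q) \geq (x_1^{\alpha_1}\cdots x_{n-1}^{\alpha_{n-1}})\ts \cpc_{\bm\alpha}(p)$, dividing through and taking the infimum over $x_1,\ldots,x_{n-1}>0$ yields the key capacity comparison
$$
\cpc_{(\alpha_1,\ldots,\alpha_{n-1})}(p_{d-\alpha_n}) \,\geq\, \frac{\cpc_{\bm\alpha}(p)}{\cpc_{(d_n-\alpha_n,\alpha_n)}(w_n)}.
$$

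Combining this bound with the inductive hypothesis applied to $p_{d-\alpha_n}$ (which supplies the factors for $i=2,\ldots,n-1$) produces the full product over $i=2,\ldots,n$, proving the first inequality. The second inequality is a routine calculus estimate: bound $\cpc_{(d_i-\alpha_i,\alpha_i)}(w_i)$ from above by substituting $y=1$ (respectively $x=1$) and using $\sum_{k=0}^{d_i} x^k \leq (1-x)^{-1}$ for $x \in (0,1)$, then optimize $x^{d_i-\alpha_i}(1-x)$ (respectively $y^{\alpha_i}(1-y)$) by basic calculus to obtain $(d_i-\alpha_i)^{d_i-\alpha_i}/(d_i-\alpha_i+1)^{d_i-\alpha_i+1}$ (respectively $\alpha_i^{\alpha_i}/(\alpha_i+1)^{\alpha_i+1}$); taking reciprocals produces the claimed $\max$. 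The main technical obstacle is the capacity comparison step: one must track the support shift (the factor $t^{d-d_n}$), match the weight $w_n$ to the bivariate degree~$d_n$, and apply the inequality between capacities in the correct direction, with the denormalized Lorentzian hypothesis on $p$ entering the argument precisely through the log-concavity of the one-parameter bivariate restriction~$q$.
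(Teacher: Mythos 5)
Your proposal follows essentially the same route as the paper's proof: induction on $n$, using Lemma~\ref{lem:denorm_Lorentzian_proj} to show the one-parameter bivariate restriction is denormalized Lorentzian (hence has log-concave coefficients), applying the weighted-log-concave capacity bound of Lemma~\ref{lem:weighted_log_concave_capacity} with weight $\sum_{\ell=0}^{d_n} t^\ell x_n^{d_n-\ell}$, and then feeding the resulting capacity comparison $\cpc_{(\alpha_1,\ldots,\alpha_{n-1})}(p_{d-\alpha_n}) \geq \cpc_{\bm\alpha}(p)/\cpc_{(d_n-\alpha_n,\alpha_n)}(w_n)$ into the inductive hypothesis for $p_{d-\alpha_n}$. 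The only cosmetic departures are your base case $n=1$ in place of the paper's $n=2$, and your explicit normalization $\widetilde q = t^{d_n-d}q$ to handle the support shift, which the paper treats by a tacit re-indexing.
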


\smallskip

\begin{proof}
    The proof is by induction over $n \geq 2$. The case $n=2$ is Lemma~\ref{lem:weighted_log_concave_capacity}
    and Corollary~\ref{cor:denorm_Lor_bivariate_capacity}. Let $n>2$, and write
    $$
        p(x_1,\ldots,x_n) \, = \, \sum_{i=0}^{d} \. x_n^{d-i} \ts p_i(x_1,\ldots,x_{n-1}).
    $$
    Then for positive $y_1,\ldots, y_{m-1}$, Lemma~\ref{lem:denorm_Lorentzian_proj} implies that
    $$
        p(y_1 t, \ldots, y_{n-1} t, s) \, = \, \sum_{i=0}^{d} \. s^{d-i} t^i p_i(y_1,\ldots,y_{n-1})
    $$
    is denormalized Lorentzian. Now for $s,t\ge 0$, we have:
      \begin{align*}
        \cpc_{\bm\alpha}(p) \, \leq \, \frac {p(y_1 t, \ldots, y_{n-1} t, s)}{(ty_1)^{\alpha_1} \cdots (ty_{n-1})^{\alpha_{n-1}}s^{\alpha_n}}
        \, = \, \frac {\sum_{i=0}^{d_n} \. s^{d_n-i} t^i p_i(y_1,\ldots,y_{n-1})}{ t^{d_n-\alpha_n}s^{\alpha_n}y_1^{\alpha_1}\cdots y_{n-1}^{\alpha_{n-1}}}\..
      \end{align*}
 Clearly the polynomial \ts $\sum_{i=0}^{d_n} \. s^{d_n-i} t^i p_i(y_1,\ldots,y_{n-1})$ \ts is also denormalized Lorentzian.
 Thus by Lemma~\ref{lem:weighted_log_concave_capacity}, we obtain:
    \begin{align*}
        \cpc_{\bm\alpha}(p) \, & \leq \, \. \frac{\cpc_{(d_n-\alpha_n,\alpha_n)}
        \left( \sum_{i=0}^{d_n} \ts s^{d_n-i}t^{i} \ts p_i(y_1,\ldots,y_{n-1})\right)}{y_1^{\alpha_1}\cdots y_{n-1}^{\alpha_{n-1}}}
        \\
        & \leq \, \. \frac{\cpc_{(d_n-\alpha_n, \alpha_n)}\left(\sum_{k=0}^{d_n} x^k y^{d_n-k}\right) \cdot p_{\alpha_n}(y_1,\ldots, y_{n-1})}{y_1^{\alpha_1}\cdots y_{n-1}^{\alpha_{n-1}}}\,.
    \end{align*}
    Therefore, by Corollary~\ref{cor:denorm_Lor_bivariate_capacity}, we conclude:
    \begin{align*}
        \cpc_{(\alpha_1,\ldots, \alpha_{n-1})}(p_{\alpha_n})\,  &\geq \,\.
        \frac{\cpc_{\bm\alpha}(p)}{\cpc_{(d_n-\alpha_n, \alpha_n)}\left(\sum_{k=0}^{d_n} x^k y^{d_n-k}\right)} \\
            &\geq \,\. \max\left\{\frac{\alpha_i^{\alpha_i}}{(\alpha_i+1)^{\alpha_i+1}}, \.
            \frac{(d_i-\alpha_i)^{d_i-\alpha_i}}{(d_i-\alpha_i+1)^{d_i-\alpha_i+1}}\right\} \.\cpc_{\bm\alpha}(p)\ts.
    \end{align*}
    The result follows by induction.
\end{proof}

Essentially the same proof also works for real stable polynomials, giving the same bound achieved by Gurvits. The proof we give here is similar to Gurvits's, but we have put it in our language for the sake of clarity and easy comparison to the proof of Theorem~\ref{thm:denorm_Lorentzian_bound}.


\begin{theorem}[{\cite[Thm~5.1]{Gur15}}] \label{thm:Lorentzian_bound}
    Let \ts $p \in \R_+[x_1,\ldots,x_n]$ \ts be a real stable polynomial of degree $d$, given by
    $$
        p(x_1,\ldots,x_n)\, = \, \sum_{\mu_1 + \cdots + \mu_n = d} \. p_{\bm\mu} \ts \bm{x}^{\bm\mu}.
    $$
    Let $d_i$ be the degree of $x_i$ in
    $$
        \left.\partial_{i+1}^{\alpha_{i+1}} \cdots \ts \partial_{n}^{\alpha_{n}} \ts p\right|_{x_{i+1}=\ldots=x_n=0}\,, \ \ \ i=1,\ldots, n-1,
    $$
    and $d_n$, the degree of $x_n$ in $p$. For any $\bm\alpha \in \N_+^n$ such that $\alpha_1 + \ldots + \alpha_n = d$, we have
    $$
        \frac{p_{\bm\alpha}}{\cpc_{\bm\alpha}(p)} \, \geq \,
        \prod_{i=2}^n \. \binom{d_i}{\alpha_i} \. \frac{\alpha_i^{\alpha_i}(d_i-\alpha_i)^{d_i-\alpha_i}}{d_i^{d_i}}\,.
    $$
\end{theorem}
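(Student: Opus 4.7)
The plan is to imitate the proof of Theorem~\ref{thm:denorm_Lorentzian_bound} essentially step for step, replacing each denormalized Lorentzian ingredient by its real stable analogue. I induct on the number of variables $n \ge 2$.

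For the base case $n=2$, a homogeneous bivariate real stable polynomial with nonnegative coefficients is automatically Lorentzian (by the remark in $\S$\ref{ss:Lorenz-realstable}), so Corollary~\ref{cor:Lorentzian_bivariate_capacity} applied to $p$ at the bidegree $(\alpha_1,\alpha_2)$ yields the claimed bound directly, with $d_2 = d$ and $d-\alpha_2 = \alpha_1$.

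For the inductive step, fix positive $y_1,\ldots,y_{n-1}$ and substitute $x_j = y_j t$ for $j < n$ and $x_n = s$. By Lemma~\ref{lem:Lorentzian_proj} (real stable case), the resulting bivariate polynomial in $(s,t)$ is real stable; pulling out the factor $t^{d-d_n}$ (which preserves real stability) leaves a homogeneous bivariate real stable polynomial $\wt{p}(s,t)$ of degree $d_n$, hence Lorentzian. Applying Corollary~\ref{cor:Lorentzian_bivariate_capacity} to $\wt{p}$ at bidegree $(\alpha_n, d_n-\alpha_n)$ and relating $\cpc_{\bm\alpha}(p)$ to $\cpc_{(\alpha_n,d_n-\alpha_n)}(\wt p)$ via the substitution, then taking the infimum over $y_1,\ldots,y_{n-1}>0$, yields
$$\cpc_{(\alpha_1,\ldots,\alpha_{n-1})}(q_{\alpha_n}) \. \geq \. \binom{d_n}{\alpha_n}\.
\frac{\alpha_n^{\alpha_n}(d_n-\alpha_n)^{d_n-\alpha_n}}{d_n^{d_n}} \. \cpc_{\bm\alpha}(p),$$
where $q_{\alpha_n}$ denotes the coefficient of $x_n^{\alpha_n}$ in $p$, viewed as a polynomial in $x_1,\ldots,x_{n-1}$.

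Finally, $q_{\alpha_n}$ is proportional to $\partial_n^{\alpha_n} p|_{x_n=0}$, which is real stable in the remaining $n-1$ variables by Lemma~\ref{lem:Lorentzian_induction}; the indices $d_1,\ldots,d_{n-1}$ attached to $q_{\alpha_n}$ coincide with those attached to $p$, and the coefficient of $x_1^{\alpha_1}\cdots x_{n-1}^{\alpha_{n-1}}$ in $q_{\alpha_n}$ equals $p_{\bm\alpha}$. Applying the inductive hypothesis to $q_{\alpha_n}$ with target $(\alpha_1,\ldots,\alpha_{n-1})$ supplies the product $\prod_{i=2}^{n-1}$, and multiplying by the $i=n$ factor above completes the induction. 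The only mildly subtle point is ensuring that the bivariate projection is genuinely homogeneous of degree $d_n$ rather than $d$, so that Corollary~\ref{cor:Lorentzian_bivariate_capacity} produces the binomial $\binom{d_n}{\alpha_n}$ rather than $\binom{d}{\alpha_n}$; this is exactly what the $t^{d-d_n}$ factorization handles. Otherwise the argument is a direct transcription of the proof of Theorem~\ref{thm:denorm_Lorentzian_bound}.
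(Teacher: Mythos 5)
Your proposal follows the paper's own approach exactly: the paper's proof is a brief sketch that says to repeat the induction of Theorem~\ref{thm:denorm_Lorentzian_bound}, replacing $\sum_{i=0}^{d_n} x^i y^{d_n-i}$ by $(x+y)^{d_n}$ and invoking Corollary~\ref{cor:Lorentzian_bivariate_capacity} in place of Corollary~\ref{cor:denorm_Lor_bivariate_capacity}, which is precisely what you do. Your write-up is somewhat more careful than the paper's (in particular the explicit $t^{d-d_n}$ factorization making the bivariate projection homogeneous of degree $d_n$, which the paper handles by an implicit reindexing in its first display), but it is the same argument.
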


\begin{proof}
    The proof is by induction over $n \geq 2$. The case of $n=2$ is
    Corollary~\ref{cor:Lorentzian_bivariate_capacity}.
    Now, every step of the induction of the proof of Theorem~\ref{thm:denorm_Lorentzian_bound}
    then holds for real stable polynomials, with \ts $\sum_{i=0}^{d_n} x^i y^{d_n-i}$ \ts
    replaced by $(x+y)^{d_i}$. The main difference is that in the second to last step
    we apply Corollary~\ref{cor:Lorentzian_bivariate_capacity} to get
    \[
        \cpc_{\bm\alpha}(p) \, \leq \,
        \binom{d_n}{\alpha_n}^{-1} \. \frac{d_n^{d_n}}{\alpha_n^{\alpha_n} (d_n-\alpha_n)^{d_n-\alpha_n}}
        \. \cdot \. \frac{p_{\alpha_n}(y_1,\ldots,y_{n-1})}{y_1^{\alpha_1} \cdots \ts y_n^{\alpha_n}}\,,
    \]
    for all $y_1,\ldots,y_{n-1} > 0$. This implies
    \[
        \cpc_{(\alpha_1,\ldots, \alpha_{n-1})}(p_{\alpha_n}) \,\geq \,
        \binom{d_n}{\alpha_n} \. \frac{\alpha_n^{\alpha_n} (d_n-\alpha_n)^{d_n-\alpha_n}}{d_n^{d_n}}
        \. \cdot \. \cpc_{\bm\alpha}(p)\.,
    \]
which proves the step of induction.
\end{proof}

\bigskip

\section{Proofs of the results}\label{sec:proofs}

In this section we prove the results in Sections~\ref{sec:main_results} and~\ref{sec:random}.
We obtain bounds on $\CT_K(\bm\alpha,\bm\beta)$ for various $K$, and on probabilities
that a random contingency table will have marginals $(\bm\alpha,\bm\beta)$
when the entries are chosen from binomial and Poisson distributions.
The proofs of these facts all have the same form:

(1) determine a polynomial whose coefficients hold the information that we want to bound,

(2) transform that polynomial 
until it is real stable or denormalized Lorentzian, and then

(3) apply the capacity bounds of the previous section.

\medskip

\subsection{Proof of Theorem~\ref{thm:main_general_bound} and Corollary~\ref{c:main-CT}}
\label{sec:gen_ct_proof}
%
Recall that \ts $\CT_K(\bm\alpha,\bm\beta)$ \ts is the coefficient of \ts
$\bm{x}^{\bm\alpha} \bm{y}^{\bm\beta}$ \ts in the polynomial
\[
    P_K(\bm{x},\bm{y}) \, := \, \prod_{i=1}^m \. \prod_{j=1}^n \. \sum_{\ell=0}^{k_{ij}} \ts x_i^\ell y_j^\ell
\]
for every $\bm\alpha$, $\bm\beta$, and every~$K=(k_{ij})$. When the entries of $K$ are finite,
we invert the $\bm{y}$ variables to get
\[
    \wt{P}_K(\bm{x},\bm{y}) \, := \, y_1^{\sum_i k_{i1}} \cdots \ts y_n^{\sum_i k_{in}} \. P_K(\bm{x},\bm{y}^{-1})
    \, = \, \prod_{i=1}^m \. \prod_{j=1}^n \. \sum_{\ell=0}^{k_{ij}} \. x_i^\ell y_j^{k_{ij}-\ell}\ts.
\]
The polynomial $\wt{P}_K$ is a product of denormalized Lorentzian polynomials.
By Proposition~\ref{prop:denorm_product}, this implies $\wt{P}_K$ is also denormalized Lorentzian.
Therefore, we can apply Theorem~\ref{thm:denorm_Lorentzian_bound} to $\wt{P}_K$.
Since $\wt{P}_K$ is of degree \ts $\lambda_i := \sum_j k_{ij}$ \ts in $x_i$ for all~$i$,
and of degree \ts $\gamma_j := \sum_i k_{ij}$ \ts in $y_j$ for all~$j$,
we obtain the following for any valid \ts $\bm\alpha,\bm\beta$:
\[
    \frac{[\wt{P}_K]_{\bm\alpha\,(\bm\gamma-\bm\beta)}}{\cpc_{\bm\alpha\,(\bm\gamma-\bm\beta)}(\wt{P}_K)}
    \, \geq \,  \prod_{i=2}^m \frac{\alpha_i^{\alpha_i}}{(\alpha_i+1)^{\alpha_i+1}} \,
    \prod_{j=1}^n \. \frac{\beta_j^{\beta_j}}{(\beta_j+1)^{\beta_j+1}}\..
\]
Here \ts $[\wt{P}_K]_{\bm\alpha\,\bm\beta}$ \ts denotes the coefficient of $\wt{P}_K$
corresponding to the monomial \ts $\bm{x}^{\bm\alpha} \bm{y}^{\bm\beta}$.
Finally, it is straightforward to see that
\[
    \frac{\CT_K(\bm\alpha,\bm\beta)}{\cpc_{\bm\alpha\,\bm\beta}(P_K)} \, = \,
    \frac{[P_K]_{\bm\alpha\,\bm\beta}}{\cpc_{\bm\alpha\,\bm\beta}(P_K)} \, = \,
    \frac{[\wt{P}_K]_{\bm\alpha\,(\bm\gamma-\bm\beta)}}{\cpc_{\bm\alpha\,(\bm\gamma-\bm\beta)}(\wt{P}_K)}\,.
\]
We now simplify this bound. Note first that for any $k \in \N$, we have:
\[
    \frac{k^k}{(k+1)^{k+1}} \, = \,
    \frac{1}{k+1} \left(\frac{k}{k+1}\right)^k \, \geq \, \frac{1}{e(k+1)}\..
\]
Combining this with the above bound gives
\begin{align*}
    \frac{\CT_K(\bm\alpha,\bm\beta)}{\cpc_{\bm\alpha\,\bm\beta}(P_K)} \, & \geq \
    \prod_{i=2}^m \. \frac{\alpha_i^{\alpha_i}}{(\alpha_i+1)^{\alpha_i+1}} \.
    \prod_{j=1}^n \. \frac{\beta_j^{\beta_j}}{(\beta_j+1)^{\beta_j+1}} \\
        &\geq \ \frac{1}{e^{m+n-1}} \. \prod_{i=2}^m \frac{1}{\alpha_i+1} \. \prod_{j=1}^n \. \frac{1}{\beta_j+1}\..
\end{align*}
Finally if $K$ has some entries which are $+\infty$, then we can choose large finite numbers for the those entries,
apply the previous argument, and limit to $+\infty$ (see Lemma~\ref{lem:cap_lim_analytic}). \ $\sq$

\begin{remark} \label{rem:stronger_general_bound}
    Note that we did not use the full strength of Theorem~\ref{thm:denorm_Lorentzian_bound} here,
    which allows us to make the following replacements:
    \[
        \frac{\alpha_i^{\alpha_i}}{(\alpha_i+1)^{\alpha_i+1}} \ \longrightarrow \
        \max\left\{\frac{\alpha_i^{\alpha_i}}{(\alpha_i+1)^{\alpha_i+1}}, \,
        \frac{(\lambda_i-\alpha_i)^{\lambda_i-\alpha_i}}{(\lambda_i-\alpha_i+1)^{\lambda_i-\alpha_i+1}}\right\}
    \]
    and
    \[
        \frac{\beta_j^{\beta_j}}{(\beta_j+1)^{\beta_j+1}} \ \longrightarrow \
        \max\left\{\frac{\beta_j^{\beta_j}}{(\beta_j+1)^{\beta_j+1}}, \,
        \frac{(\gamma_j-\beta_j)^{\gamma_j-\beta_j}}{(\gamma_j-\beta_j+1)^{\gamma_j-\beta_j+1}}\right\}.
    \]
    Via the above simplification, we then obtain the stronger bound
    \[
        \frac{\CT_K(\bm\alpha,\bm\beta)}{\cpc_{\bm\alpha\,\bm\beta}(P_K)} \, \geq \,
        \frac{1}{e^{m+n-1}} \. \prod_{i=2}^m \. \frac{1}{\min\{\alpha_i, \lambda_i-\alpha_i\}+1} \.
        \prod_{j=1}^n \. \frac{1}{\min\{\beta_j, \gamma_j-\beta_j\}+1}\.,
    \]
    where \ts $\lambda_i := \sum_j k_{ij}$ \ts for all~$i$, and \ts
    $\gamma_j := \sum_i k_{ij}$ \ts for all~$j$.
\end{remark}

\medskip

\subsection{Proof of Theorem~\ref{thm:almost_const}}
In this section, we obtain a simply exponential approximation factor for
$\CT(\bm\alpha,\bm\beta)$ in the case that $\alpha_i, \beta_j \leq c$ for $i,j \geq 2$.
Letting $N := \sum_{i=1}^m \alpha_i = \sum_{j=1}^n \beta_i$ (which is dominated by the
value of $\alpha_1$ and $\beta_1$), we will compute $\CT_K(\bm\alpha,\bm\beta)$,
where we may assume $K$ to be the matrix with each entry defined by
$k_{ij} := \min(\alpha_i,\beta_j)$. By Remark~\ref{rem:stronger_general_bound},
we obtain the bound
\[
\begin{split}
    \frac{\CT_K(\bm\alpha,\bm\beta)}{\cpc_{\bm\alpha\,\bm\beta}(P_K)} \, &\geq \,
    \frac{1}{e^{m+n-1}} \, \prod_{i=2}^m \. \frac{1}{\min\{\alpha_i, \lambda_i-\alpha_i\}+1} \,
    \prod_{j=1}^n \. \frac{1}{\min\{\beta_j, \gamma_j-\beta_j\}+1} \\
        &\geq \, \frac{1}{e^{m+n-1}} \. \cdot \. \frac{1}{\gamma_1-\beta_1+1} \,
        \prod_{i=2}^m \. \frac{1}{c+1} \, \prod_{j=2}^n \frac{1}{c+1} \\
        &\geq \frac{1}{e^{m+n-1}} \. \cdot \. \frac{1}{(m-1)c+1} \. \cdot \. \frac{1}{(c+1)^{m+n-2}} \\
        &\geq \frac{1}{(m+n-1) \. \cdot \. (e(c+1))^{m+n-1}}\,,
\end{split}
\]
where \ts $\lambda_i := \sum_j k_{ij}$ for all~$i$, and $\gamma_j := \sum_i k_{ij}$ for all~$j$.
Since \ts $\CT_K(\bm\alpha,\bm\beta) = \CT(\bm\alpha,\bm\beta)$ \ts in this case, we obtain
\[
    \cpc_{\bm\alpha\,\bm\beta}(P_K) \, \geq \, \CT(\bm\alpha,\bm\beta) \,
    \geq \, \frac{1}{(m+n-1) \. \cdot \. (e(c+1))^{m+n-1}} \. \cdot \. \cpc_{\bm\alpha\,\bm\beta}(P_K)\ts.
\]
This finishes the proof.  \ $\sq$

\medskip

\subsection{Proof of Theorem~\ref{thm:general_binom_bound}} \label{sec:binom_bound_proof}
We now prove the probability bound in the case where the $(i,j)$-\text{th} entry
of the table is sampled according to a binomial distribution on \ts $\{0,1,\ldots,k_{ij}\}$
with parameter $s \in [0,1]$. The probability of selecting a contingency table
with marginals $(\bm\alpha,\bm\beta)$ in this case is given by the coefficient of \ts
$\bm{x}^{\bm\alpha} \bm{y}^{\bm\beta}$ \ts in the polynomial
\[
    Q_{K,s}(\bm{x},\bm{y}) \, := \, \prod_{i=1}^m \. \prod_{j=1}^n \.
    \bigl(sx_iy_j + (1-s)\bigr)^{k_{i,j}}
\]
for all \ts $\bm\alpha$, $\bm\beta$, and $K=(k_{ij})$ with finite entries.
Note that \ts $Q_{K,s}(\bm{1},\bm{1}) = 1$.
We can invert the $\bm{y}$ variables to get
\[
    \wt{Q}_{K,s}(\bm{x},\bm{y}) \, := \, y_1^{\sum_i k_{i1}} \cdots
    y_n^{\sum_i k_{in}} \ts Q_{K,s}(\bm{x},\bm{y}^{-1}) \,= \,
    \prod_{i=1}^m \. \prod_{j=1}^n \. \bigl(sx_i + (1-s)y_j\bigr)^{k_{ij}}.
\]
This polynomial is real stable, and we can apply Theorem~\ref{thm:Lorentzian_bound}.
Since $\wt{Q}_{K,s}$ is of degree \ts
$\lambda_i := \sum_j k_{ij}$ \ts in $x_i$ for all~$i$, and of degree \ts
$\gamma_j := \sum_i k_{ij}$ \ts in $y_j$ for all~$j$, we obtain the following
for all valid \ts $\bm\alpha,\bm\beta$\ts:
\[
    \frac{[\wt{Q}_{K,s}]_{\bm\alpha,(\bm\gamma-\bm\beta)}}{\cpc_{\bm\alpha,\bm(\gamma-\bm\beta)}(\wt{Q}_{K,s})}
    \, \geq \,
    \prod_{i=2}^m \. \binom{\lambda_i}{\alpha_i} \. \frac{\alpha_i^{\alpha_i}(\lambda_i-\alpha_i)^{\lambda_i-\alpha_i}}{\lambda_i^{\lambda_i}}
    \, \prod_{j=1}^n \.\binom{\gamma_j}{\beta_j} \. \frac{\beta_j^{\beta_j}(\gamma_j-\beta_j)^{\gamma_j-\beta_j}}{\gamma_j^{\gamma_j}}\..
\]
Here, $[\wt{Q}_{K,s}]_{\bm\alpha,\bm\beta}$ denotes the coefficient of \ts $\wt{Q}_{K,s}$ \ts
corresponding to the monomial $\bm{x}^{\bm\alpha} \bm{y}^{\bm\beta}$.
It is then straightforward to see that
\[
    \frac{[Q_{K,s}]_{\bm\alpha,\bm\beta}}{\cpc_{\bm\alpha,\bm\beta}(Q_{K,s})}
    \, = \, \frac{[\wt{Q}_{K,s}]_{\bm\alpha,(\bm\gamma-\bm\beta)}}{\cpc_{\bm\alpha,(\bm\gamma-\bm\beta)}(\wt{Q}_{K,s})}\..
\]
This gives the desired bound. \ $\sq$

\medskip

\subsection{Proof of Theorem~\ref{thm:main_binary_bound}} \label{sec:binary_ct_proof}
The bound in this case follows from the binomial-distributed case
($\S$\ref{sec:binom_bound_proof} above), up to scalar when $K$ is a 0-1 matrix
and \ts $s = \frac{1}{2}$.  The details are straightforward.   \ $\sq$

\medskip

\subsection{Proof of Theorem~\ref{t:random-typical}} \label{ss:proof-typical}
The equality follows from the same sort of arguments used
to prove Lemma~5.3~(2) of~\cite{Bar12}. However, another proof can be
given using the following nice capacity-theoretic result, which we
present for completeness.

\smallskip

\begin{proposition}
    Given \ts $p_1,\ldots,p_m \in \R_+[x_1,\ldots,x_n]$ \ts and \ts $\bm\alpha \in \R_+^n$, we have:
    \[
        \cpc_{\bm\alpha}\Biggl(\prod_{k = 1}^m \. p_k \Biggr) \, = \,
        \sup_{\substack{\bm\beta^1\ts,\ts\ldots\ts,\ts\bm\beta^m \in \R_+^n \\
        \bm\beta^1\ts + \. \ldots \. + \ts \bm\beta^m = \ts \bm\alpha}} \
        \prod_{k = 1}^m \. \cpc_{\bm\beta^k}(p_k).
    \]
\end{proposition}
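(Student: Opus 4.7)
The inequality $\geq$ is routine. Given any decomposition \ts $\bm\beta^1 + \cdots + \bm\beta^m = \bm\alpha$ \ts with $\bm\beta^k \in \R_+^n$, for every $\bm{x} > 0$ one has
\[
\frac{\prod_{k=1}^m p_k(\bm{x})}{\bm{x}^{\bm\alpha}} \, = \, \prod_{k=1}^m \frac{p_k(\bm{x})}{\bm{x}^{\bm\beta^k}}\ts,
\]
and since each factor on the right is nonnegative, the infimum over $\bm{x}>0$ of the product is at least the product of the infima, namely \ts $\prod_k \cpc_{\bm\beta^k}(p_k)$. Taking the supremum over decompositions yields the easy direction.

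For the reverse inequality I plan to pass to logarithmic coordinates. Writing $x_i = e^{t_i}$, the function \ts $f_k(\bm{t}) := \log p_k(e^{t_1},\ldots,e^{t_n})$ \ts is convex and finite on all of $\R^n$ by log-sum-exp (it is the logarithm of a nonnegative combination of exponentials \ts $e^{\langle \bm\mu,\bm{t}\rangle}$). Moreover, one recognizes the negative log-capacity as the Legendre--Fenchel conjugate:
\[
-\log \cpc_{\bm\alpha}(p_k) \, = \, \sup_{\bm{t}\in\R^n}\. \bigl[\langle \bm\alpha,\bm{t}\rangle - f_k(\bm{t})\bigr] \, = \, f_k^{*}(\bm\alpha)\ts.
\]
Since \ts $\log\prod_k p_k(e^{\bm{t}}) = \sum_k f_k(\bm{t})$, the claimed identity (after negating logarithms) reduces to the classical infimal-convolution formula
\[
\Bigl(\sum_{k=1}^m f_k\Bigr)^{*}(\bm\alpha) \, = \, \inf_{\bm\beta^1+\cdots+\bm\beta^m = \bm\alpha} \, \sum_{k=1}^m f_k^{*}(\bm\beta^k)\ts,
\]
which holds because each $f_k$ is proper, convex, and continuous on $\R^n$ (see, e.g., Rockafellar, \emph{Convex Analysis}, Thm.~16.4). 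Exponentiating then produces the desired equality.

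The principal obstacle is a clean treatment of the boundary, since capacities can vanish. If \ts $\bm\alpha$ \ts lies outside the Minkowski sum \ts $\Newt(p_1)+\cdots+\Newt(p_m) = \Newt(\prod_k p_k)$, both sides vanish and the statement is trivial; if $\bm\alpha$ lies in the relative interior of that Minkowski sum, each $f_k^{*}$ is finite at a suitable $\bm\beta^k$, the conjugate-of-a-sum formula applies directly, and the outer supremum is in fact attained. The intermediate relative-boundary case is handled by continuity of capacity in $\bm\alpha$ on $\Newt$ together with a standard limiting argument, perturbing $\bm\alpha$ into the interior and passing to the limit. Once this routine convex-analytic bookkeeping is done, the two inequalities combine to give the identity.
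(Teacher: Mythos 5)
Your proof is correct, and it takes a genuinely different route from the paper's. The paper's argument for the hard inequality is based on Fact~2.10 of~\cite{GL18}: over $\bm\gamma\in\R_+^n$, $\cpc_{\bm\gamma}(p)$ is maximized at $\bm\gamma=\nabla\log p|_{\bm{x}=\bm{1}}$ where it equals $p(\bm{1})$; since $\nabla\log\prod_k p_k=\sum_k\nabla\log p_k$, the corresponding split of $\bm\alpha$ matches and gives equality in this special case, after which a variable rescaling $\bm{x}\mapsto\bm{r}\cdot\bm{x}$ handles $\bm\alpha$ anywhere in the relative interior of $\Newt(\prod_k p_k)$, and a limiting argument treats the relative boundary. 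You instead pass to log-coordinates, identify $-\log\cpc_{\bm\alpha}(p)$ with the Legendre--Fenchel conjugate $f^{*}(\bm\alpha)$ of the log-sum-exp convexification $f$, and reduce the claim to $(\sum_k f_k)^{*}=f_1^{*}\square\cdots\square f_m^{*}$ (Rockafellar, Thm.~16.4). That is cleaner and more modular: one classical theorem replaces the gradient characterization, the rescaling step, and the ad hoc limiting argument. In fact the boundary caveat you raise is not needed --- since each $p_k\ne 0$ has nonnegative coefficients, each $f_k$ is finite on all of $\R^n$, so $\mathrm{ri}(\mathrm{dom}\,f_k)=\R^n$ and the constraint qualification in Rockafellar's theorem holds trivially; the infimal-convolution identity is then exact pointwise for \emph{all} $\bm\alpha$, boundary and exterior included (both sides become $+\infty$ outside the Newton polytope, giving zero capacity). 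One small stylistic remark: once the conjugate-of-a-sum identity is in hand you get the two-sided equality directly, so the separate easy-direction paragraph is harmless but redundant.
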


\begin{proof}[Proof outline]
    Define \ts $p := \prod_{k=1}^m p_k$.
    First, the case where \ts
    $\left.\nabla \log(p)\right|_{\bm{x}=\bm{1}} = \bm\alpha$ \ts
    follows from the fact that \ts $\cpc_{\bm\gamma}(p)$ \ts is maximized over \ts
    $\bm\gamma \in \R_+^n$ \ts at \ts $\bm\gamma = \left.\nabla \log(p)\right|_{\bm{x}=\bm{1}}$, and \ts $\cpc_{\bm\alpha}(p) = p(\bm{1})$ \ts in this case (see Fact~2.10 of~\cite{GL18}).
    Then, to handle $\bm\alpha$ in the relative interior of the Newton polytope of~$p$,
    one can choose $\bm{r} > 0$ such that \ts
    $\left.\nabla \log\bigl(p(\bm{r} \cdot \bm{x})\bigr)\right|_{\bm{x}=\bm{1}} = \bm\alpha$.
    The result then follows from the first case. Since the case of $\bm\alpha$
    outside the Newton polytope of $p$ is trivial (because the capacity is~0),
    we only need to handle the case when $\bm\alpha$ is on the relative boundary
    of the Newton polytope of~$p$. This can be done by a limiting argument; the
    details are straightforward.
\end{proof}

Once we have this result, Theorem~\ref{t:random-typical} follows from a straightforward application to the polynomial \ts $Q_{K,s}$, using Lemma~\ref{lem:cap_linear} to obtain explicit expressions for the capacity of the terms of the product.

\medskip

\subsection{Proof of Theorem~\ref{thm:general_Poisson_bound}} \label{sec:Poisson_bound_proof}
We now prove the probability bound in the case where the entries of the table are sampled
according to the Poisson distribution on $\{0,1,2,\ldots\}$ with parameter $s > 0$.
The probability of selecting a contingency table with marginals $(\bm\alpha,\bm\beta)$
in this case is given by the coefficient of $\bm{x}^{\bm\alpha} \bm{y}^{\bm\beta}$ in the power series of
\[
    Q_{\infty,s}(\bm{x},\bm{y})\, := \, \prod_{i=1}^m \. \prod_{j=1}^n \. e^{sx_iy_j - s}
    \quad \text{for all \ $\bm\alpha$ \ and \ $\bm\beta$.}
\]
Note that \ts $Q_{\infty,s}(\bm{1},\bm{1}) = 1$.
Because this is not a polynomial, we can't invert the $\bm{y}$ variables as we have done above.
Instead, we view this case as a limit of the binomial case.  In particular, note that
\[
    e^{sx_iy_j\ts -\ts s} \, = \, \lim_{d \to \infty} \, \frac{1}{e^s}  \left(\frac{sx_iy_j}{d} + 1\right)^d
\]
uniformly on compact sets.  Therefore, we can consider the polynomials
\[
    R_{d,s}(\bm{x},\bm{y}) \, := \, \prod_{i=1}^m \. \prod_{j=1}^n \. \frac{1}{e^s}
    \left(\frac{sx_iy_j}{d} + 1\right)^d \quad \text{and} \quad
    \wt{R}_{d,s}(\bm{x},\bm{y}) \, := \,
    \prod_{i=1}^m \. \prod_{j=1}^n \. \frac{1}{e^s} \left(\frac{sx_i}{d} + y_j\right)^d.
\]
Since $\wt{R}_{d,s}$ is real stable, we can apply Theorem~\ref{thm:Lorentzian_bound} to get
\[
    \frac{[\wt{R}_{d,s}]_{\bm\alpha\,(md-\bm\beta)}}{\cpc_{\bm\alpha\,(md-\bm\beta)}(\wt{R}_{d,s})} \,
    \geq \,
    \prod_{i=2}^m \. \binom{nd}{\alpha_i} \. \frac{\alpha_i^{\alpha_i} (nd-\alpha_i)^{nd-\alpha_i}}{(nd)^{nd}}
    \prod_{j=1}^n \. \binom{md}{\beta_j} \. \frac{\beta_j^{\beta_j} (md-\beta_j)^{md-\beta_j}}{(md)^{md}}\,.
\]
Further, by Stirling's approximation we have
\[
    \lim_{d \to \infty} \.\binom{nd}{\alpha_i} \. \frac{\alpha_i^{\alpha_i} (nd-\alpha_i)^{nd-\alpha_i}}{(nd)^{nd}}
    \, = \, \frac{\alpha_i^{\alpha_i}}{\alpha_i!} \. e^{-\alpha_i}\ts,
\]
and the same holds for $\beta_j$. Combining this with the above bound gives
\[
    \lim_{d \to \infty} \, \frac{[\wt{R}_{d,s}]_{\bm\alpha\,(md-\bm\beta)}}{\cpc_{\bm\alpha\,(md-\bm\beta)}(\wt{R}_{d,s})}
    \, \geq \, e^{-2N+\alpha_1}\. \prod_{i=2}^m \. \frac{\alpha_i^{\alpha_i}}{\alpha_i!} \,
    \prod_{j=1}^n \. \frac{\beta_j^{\beta_j}}{\beta_j!}\,,
\]
where \ts $N := \sum_{i=1}^m \alpha_i = \sum_{j=1}^n \beta_j$.
Finally since \ts $R_{d,s} \to Q_{\infty,s}$ \ts coefficient-wise as \ts $d \to \infty$,
Lemma~\ref{lem:cap_lim_analytic} implies
\[
\begin{split}
    \frac{[Q_{\infty,s}]_{\bm\alpha\,\bm\beta}}{\cpc_{\bm\alpha\,\bm\beta}(Q_{\infty,s})}
    \, &= \, \lim_{d \to \infty} \, \frac{[R_{d,s}]_{\bm\alpha\,\bm\beta}}{\cpc_{\bm\alpha\,\bm\beta}(R_{d,s})}
    \, = \, \lim_{d \to \infty} \, \frac{[\wt{R}_{d,s}]_{\bm\alpha\,(md-\bm\beta)}}{\cpc_{\bm\alpha\,(md-\bm\beta)}(\wt{R}_{d,s})} \\
        &\geq \, e^{-2N+\alpha_1} \. \prod_{i=2}^m \. \frac{\alpha_i^{\alpha_i}}{\alpha_i!} \,
        \prod_{j=1}^n \. \frac{\beta_j^{\beta_j}}{\beta_j!} \, \geq \,
        e^{-2N} \. \prod_{i=1}^m \. \frac{\alpha_i^{\alpha_i}}{\alpha_i!} \. \prod_{j=1}^n \. \frac{\beta_j^{\beta_j}}{\beta_j!}\..
\end{split}
\]

We now compute the capacity of $Q_{\infty,s}$, which has an explicit formula due to the nice form of the function. Note first that we have
\[
    Q_{\infty,s}(\bm{x},\bm{y}) \, = \, \prod_{i=1}^m \. e^{-sn \ts + \ts s x_i  \sum_{j=1}^n y_j}\,.
\]
So the capacity expression can be broken up as follows:
\[
    \cpc_{\bm\alpha\,\bm\beta}(Q_{\infty,s}) \, = \, \inf_{\bm{y} > 0} \,
    \frac{1}{\bm{y}^{\bm\beta}} \. \prod_{i=1}^m \. \inf_{x_i > 0} \. \frac{e^{-sn \ts +\ts s x_i \sum_{j=1}^n y_j}}{x_i^{\alpha_i}}.
\]
For every \ts $i \in [m]$, note that
\[
    \frac{e^{-sn + s x_i \sum_{j=1}^n y_j}}{x_i^{\alpha_i}} \, = \,
    e^{-\alpha_i \log x_i - sn + s x_i \sum_{j=1}^n y_j}\..
\]
To minimize this expression, we only need to minimize the exponent. Applying calculus, we have
\[
    0 \, = \, \partial_{x_i} \left[-\alpha_i \log x_i - sn + s x_i \sum_{j=1}^n y_j\right] \, = \,
    -\frac{\alpha_i}{x_i} + s \sum_{j=1}^n y_j \implies x_i \, = \, \frac{\alpha_i}{s \sum_{j=1}^n y_j}\,.
\]
This gives
\[
    \inf_{x_i > 0} \. \frac{e^{-sn + s x_i \sum_{j=1}^n y_j}}{x_i^{\alpha_i}} = \frac{(se\sum_{j=1}^n y_j)^{\alpha_i}}{\alpha_i^{\alpha_i} e^{sn}} = \frac{(se)^{\alpha_i}}{\alpha_i^{\alpha_i} e^{sn}} \cdot \left(\sum_{j=1}^n y_j\right)^{\alpha_i},
\]
which in turn implies
\[
    \cpc_{\bm\alpha\,\bm\beta}(Q_{\infty,s}) \, = \,
    \frac{(se)^N}{\alpha^\alpha e^{smn}} \. \cdot \. \inf_{\bm{y} > 0} \.
    \frac{\left(\sum_{j=1}^n y_j\right)^N}{\bm{y}^{\bm\beta}}\,.
\]
By Lemma~\ref{lem:cap_linear}, we then have
\[
    \inf_{\bm{y} > 0} \, \frac{\left(\sum_{j=1}^n y_j\right)^N}{\bm{y}^{\bm\beta}} \, = \,
    \frac{N^N}{\bm\beta^{\bm\beta}}\,,
\]
which finally implies
\[
    \cpc_{\bm\alpha\,\bm\beta}(Q_{\infty,s}) \, = \,
    \frac{(seN)^N}{\bm\alpha^{\bm\alpha} \ts \bm\beta^{\bm\beta} \ts e^{smn}}\,.
\]
Combining everything then gives the desired bounds. \ $\sq$

\bigskip

\section{Comparing Bounds} \label{sec:compare_bounds}

In this section, we compare Barvinok's bounds to the bounds we are able to
achieve in this paper for counting contingency tables. To simplify the
computations, we use Stirling's approximation indiscriminately for every
factorial that appears. The approximation is in general only off by at most a factor
of \ts $\frac{e}{\sqrt{2\pi}}$, and it holds asymptotically as $\min\{\alpha_i,\beta_j\} \to \infty$ and $\min\{m,n\} \to \infty$.
This is the meaning in which we use \ts ``$\approx$'' \ts and \ts ``$\gtrsim$".

\medskip

\subsection{New bound vs.\ Barvinok's first bound}\label{ss:compare-1}
In~\cite{Bar09}, Barvinok achieves the following constant in the general case
\[
\begin{split}
    C_\mathrm{Barv}(K,\bm\alpha,\bm\beta) \, &= \,
    \frac{\Gamma(\frac{m+n}{2})}{2\ts e^5 \ts \pi^{\frac{m+n-2}{2}} \. mn\ts (N+mn)}\.
    \left(\frac{2}{(mn)^2 \ts (N+1)\ts (N+mn)}\right)^{m+n-1} \\
        &\qquad \times \frac{N!\. (N+mn)!\. (mn)^{mn}}{N^N \ts (N+mn)^{N+mn}\ts (mn)!} \,
        \prod_{i=1}^m \. \frac{\alpha_i^{\alpha_i}}{\alpha_i!} \, \prod_{j=1}^n \. \frac{\beta_j^{\beta_j}}{\beta_j!}\,,
\end{split}
\]
where \ts $N = \sum_i \alpha_i = \sum_j \beta_j$. We now compare to our constant:
\[
    C_\mathrm{new}(K,\bm\alpha,\bm\beta) \, = \,
    \prod_{i=2}^m \. \frac{\alpha_i^{\alpha_i}}{(\alpha_i+1)^{\alpha_i+1}} \,
    \prod_{j=1}^n \. \frac{\beta_j^{\beta_j}}{(\beta_j+1)^{\beta_j+1}}
    \, \approx \, \frac{1}{e^{m+n-1}} \, \prod_{i=2}^m \.\frac{1}{\alpha_i} \,
    \prod_{j=1}^n \.\frac{1}{\beta_j}\..
\]
First note that
\[
    \frac{N! \. (N+mn)!\. (mn)^{mn}}{N^N \ts (N+mn)^{N+mn} \ts (mn)!} \,
    \prod_{i=1}^m \. \frac{\alpha_i^{\alpha_i}}{\alpha_i!} \,
    \prod_{j=1}^n \. \frac{\beta_j^{\beta_j}}{\beta_j!} \, \approx \,
    \sqrt{\frac{N(N+mn)}{(2\ts \pi)^{m+n-1}\ts mn} \,
    \prod_{i=1}^m \. \frac{1}{\alpha_i} \, \prod_{j=1}^n \. \frac{1}{\beta_j}}\,.
\]
We then further have
\[
\begin{split}
    &\frac{\Gamma(\frac{m+n}{2})}{2\ts e^5 \ts \pi^{\frac{m+n-2}{2}} \. mn \ts (N+mn)}\left(\frac{2}{(mn)^2\ts (N+1)\ts (N+mn)}\right)^{m+n-1} \\
        &\qquad \approx \ \frac{\pi}{2\ts e^{\frac{11}{2}} \. mn\ts (N+mn)} \left(\frac{\sqrt{2(m+n)}}{\sqrt{e\pi}(mn)^2\ts (N+1)\ts (N+mn)}\right)^{m+n-1}\,.
\end{split}
\]
Combining these approximate equalities, gives
\[
    C_\mathrm{Barv}(K,\bm\alpha,\bm\beta) \ \approx \
    \left(\frac{\sqrt{m+n}}{\pi\ts \sqrt{e}\. (mn)^2\ts (N+1)\ts (N+mn)}\right)^{n+m-1} \,
    \sqrt{\frac{\pi^2 \ts N}{4\ts e^{11}\ts (mn)^3\ts (N+mn)} \, \prod_{i=1}^m \. \frac{1}{\alpha_i} \,
    \prod_{j=1}^n \. \frac{1}{\beta_j}}\,.
\]
We also have the more amenable bound
\[
    C_\mathrm{Barv}(K,\bm\alpha,\bm\beta) \, \lesssim \,
    \left(\frac{\sqrt{m+n}}{\pi\ts \sqrt{e} \. (mn)^2\ts (N+1)\ts (N+mn)}\right)^{n+m-1} \,
    \sqrt{\prod_{i=2}^m \. \frac{1}{\alpha_i} \, \prod_{j=1}^n \. \frac{1}{\beta_j}}\,.
\]
To compare to our constant $C_\mathrm{new}(K,\alpha,\beta)$ we use the easy bound
\[
    \prod_{i=2}^m \alpha_i \, \prod_{j=1}^n \beta_j \, \leq \, N^{m+n-1}\.,
\]
which leads to
\[
\begin{split}
    \frac{C_\mathrm{new}(K,\bm\alpha,\bm\beta)}{C_\mathrm{Barv}(K,\bm\alpha,\bm\beta)} \,
    &\gtrsim \, \left(\frac{\pi\ts (mn)^2\ts (N+1)\ts (N+mn)}{\sqrt{e\ts (m+n)}}\right)^{m+n-1} \,
    \sqrt{\prod_{i=2}^m  \. \frac{1}{\alpha_i} \, \prod_{j=1}^n \. \frac{1}{\beta_j}} \\
        &\gtrsim \, \left(N^{m+n-1}\right)^2 \sqrt{\prod_{i=2}^m \frac{1}{\alpha_i} \, \prod_{j=1}^n \. \frac{1}{\beta_j}}
        \, \ \gtrsim \, \ \left(N^{m+n-1}\right)^{\frac{3}{2}}\..
\end{split}
\]
That is, our lower bound (and approximation ratio) improves upon Barvinok's by at least the above factor.

\medskip

\subsection{New bound vs.\ Barvinok's second bound}\label{ss:compare-2}
%
%
%
There are now two features of this bound that we want to compare to ours:
the approximation ratio and the actual lower bound. For every valid~$K$,
we first note that
\begin{equation}\label{eq:H-P}
    H_N(\bm{x},\bm{y}) \. \leq \. P_K(\bm{x},\bm{y}) \quad \text{for all} \quad \bm{x},\bm{y} \. > \. 0,
\end{equation}
since $H_N$ and $P_K$ have the same coefficients on the support of $H_N$.
So in fact, if our approximation ratio is better than Barvinok's ration \ts $C_\mathrm{H}(\bm\alpha,\bm\beta)$,
then so is our lower bound.

To compare approximation ratios, we assume the $\beta_j$'s maximize \ts
$C_\mathrm{H}(\bm\alpha,\bm\beta)$ \ts and partially apply Stirling's approximation to get
\[
    C_\mathrm{H}(\bm\alpha,\bm\beta) \,\approx \,
    \binom{N+m-1}{m-1}^{-1} \binom{N+n-1}{n-1}^{-1} \left(\frac{1}{\sqrt{2\pi}}\right)^{n-1} \sqrt{\frac{N}{\prod_{j=1}^n \beta_j}}\,.
\]
This then gives
\[
    \frac{C_\mathrm{new}(\bm\alpha,\bm\beta)}{C_\mathrm{H}(\bm\alpha,\bm\beta)}
    \, \approx \, \binom{N+m-1}{m-1} \binom{N+n-1}{n-1} \.
    \frac{(\sqrt{2\pi})^{n-1}}{e^{m+n-1} \sqrt{N}} \,
    \prod_{i=2}^m \. \frac{1}{\alpha_1+1}\, \prod_{j=1}^n \.\frac{\sqrt{\beta_j}}{\beta_j+1}\,.
\]
We now make a few simple observations. First, if \ts $k \ll n$, by Stirling's approximation we have
\[
    \binom{n+k}{k} \, \approx \, \sqrt{\frac{n+k}{2\pi nk}} \cdot \frac{(n+k)^{n+k}}{k^k n^n}
    \, \approx \, \sqrt{\frac{n+k}{2\pi nk}} \left(\frac{e(n+k)}{k}\right)^k\ts.
\]
Then by the AM--GM inequality, we obtain:
\[
\begin{split}
    \prod_{j=1}^n \frac{1}{\beta_j+1} \, \geq \, \left(\frac{n}{N+n}\right)^n
    \, & \approx \, e^n \binom{N+n}{n}^{-1} \sqrt{\frac{N+n}{2\pi Nn}} \,
    = \, e^n \binom{N+n-1}{n-1}^{-1} \sqrt{\frac{n}{2\pi N(N+n)}}\,.
\end{split}
\]
Similarly,
\[
    \prod_{i=2}^m \frac{1}{\alpha_i+1} \, \geq \,
    \left(\frac{m-1}{N-\alpha_1+m-1}\right)^{m-1} \ \gtrsim \
    e^{m-1} \binom{N+m-1}{m-1}^{-1} \sqrt{\frac{N+m-1}{2\pi N(m-1)}}\,.
\]
Further, it is easy to see that
\[
    \frac{\sqrt{\beta_j}}{\beta_j + 1} \geq \frac{1}{\sqrt{2} \cdot \sqrt{\beta_j+1}}
    \quad \text{for} \quad \beta_j \.\geq \. 1,
\]
which implies
\[
\begin{split}
    \frac{C_\mathrm{new}(\bm\alpha,\bm\beta)}{C_\mathrm{H}(\bm\alpha,\bm\beta)} \
    &\gtrsim \
    \binom{N+n-1}{n-1}^{\frac{1}{2}} \left(\frac{\pi^{2n-5} n (N+m-1)^2}{e^{2n} 2^5 N^5 (N+n) (m-1)^2}\right)^{\frac{1}{4}} \\
        &\gtrsim \ \left(\frac{N+n-1}{n-1}\right)^{\frac{n-1}{2}} \left(\frac{\pi^{2n-6} n (N+m-1)^2 (N+n-1)}{e^2 2^6 N^6 (N+n) (m-1)^2 (n-1)}\right)^{\frac{1}{4}} \\
        &\gtrsim \ \frac{1}{\pi^3 N\sqrt{m}} \left(\frac{\pi(N+n-1)}{n-1}\right)^{\frac{n-1}{2}}.
\end{split}
\]
Therefore, our approximation ratio improves upon Barvinok's second approximation ratio.

\bigskip

\section{Volumes of flow and transportation polytopes} \label{sec:volume}

\subsection{The setup}\label{ss:volume-setup}
Let \ts $\bm{\alpha} = (\alpha_1,\ldots,\alpha_m) \in \N^m$ \ts and
\ts $\bm{\beta} = (\beta_1,\ldots,\beta_n) \in \N^n$ \ts be integer vectors.
A \emph{transportation polytope} \ts $\mathcal{T}_{\bm{\alpha},\bm{\beta}}$
\ts
is the set of  $m \times n$ real matrices $Z=(z_{ij})$, such that \ts $z_{ij}\ge 0$\ts,
and
\begin{equation}\label{eq:transp}
    \sum_{i=1}^m \. z_{ij} \. = \. \beta_j \text{\, for all \, $1\le j\le n\,,$}
    \quad \text{and} \quad \sum_{j=1}^n \. z_{ij} \. =\. \alpha_i
    \text{\, for all \, $1\le i\le m\ts.$}
\end{equation}
Transportation polytopes are classical objects of study in
geometric combinatorics and combinatorial optimization \cite{DK,EKK},
and their volume is one of the motivations to study contingency tables,
see e.g.~\cite{Bar09,CM09}.

The celebrated \emph{Birkhoff polytope} $\mathcal{B}_n$ is a special case
of the transportation polytope \ts $\mathcal{T}_{\bm\alpha,\bm\beta}$, when
$m=n$, and \ts $\bm\alpha=\bm\beta= (1,\ldots,1)\in \R^n$.  It is especially
well studied in its own right, see e.g.~\cite{EKK,Pak}.  First few
values of \ts $\Vol(\mathcal{B}_n)$ \ts are given in~\cite{CR,BP03,BP03b},
see also~\cite[\href{https://oeis.org/A037302}{A037302}]{OEIS}.  This is
also one of the few cases when the exact asymptotics for the volumes is
known, see Example~\ref{ex:volume-Birkhoff} below.

For a matrix $K=(k_{ij})$, a \emph{flow polytope} \ts
$\mathcal{F}_{K,\bm{\alpha},\bm{\beta}}$ \ts is defined by~\eqref{eq:transp}
and \ts $0\le z_{ij} \le k_{ij}$.  For $K=\infty$ we obtain the transportation
polytopes.  Note also that the $K$-contingency tables are integer
points in \ts $\mathcal{F}_{K,\bm{\alpha},\bm{\beta}}$.

The volume of flow polytopes has been actively studied in connection to
both discrete geometry and enumerative combinatorics. We refer to
\cite{BV,BDV,CDR}, and to more recent papers~\cite{B+,MM} for further references.

\medskip

\subsection{The results}\label{ss:volume-thm}
The connection between the number of $K$-contingency tables and the
volume of the corresponding flow polytope is given by the
following:
\[
    \Vol(\mathcal{F}_{K,\bm\alpha,\bm\beta}) \, = \,
    f(S,m,n) \. \cdot \. \lim_{M \to \infty} \.
    \frac{\CT_{MK}(M\bm\alpha,M\bm\beta)}{M^{(m-1)(n-1)}}\,,
\]
where $S=$ Supp$(K)$ and the \ts $f(S,m,n)^2$ \ts is
the covolume of the \ts lattice \ts $\Z\langle S\rangle
\cap \R\langle \mathcal{F}_{K,\bm\alpha,\bm\beta}\rangle$.

For the transportation polytopes, we have:
\[
    \Vol(\mathcal{T}_{\bm\alpha,\bm\beta}) \, = \,
    \sqrt{m^{n-1} \ts n^{m-1}} \. \cdot \. \lim_{M \to \infty} \.
    \frac{\CT(M\bm\alpha,M\bm\beta)}{M^{(m-1)(n-1)}}\,,
\]
where the covolume \ts $m^{n-1} \ts n^{m-1}$ \ts computed e.g.\
in~\cite[Lemma~3]{DE}.\footnote{The covolume \ts
$m^{n-1}\ts n^{m-1}$ \ts is equal to the number of spanning graphs in a complete
bipartite graph \ts $K_{mn}$, an observation which extends to all multigraphical
matrices~$K$, cf.~$\S$\ref{ss:finrem-hist}.}

\smallskip

\begin{theorem}[General lower bound]\label{t:volume-general-flow}
    Let $\bm\alpha \in \N^m$ and $\bm\beta \in \N^n$ be such that \ts
    $\sum_i \alpha_i = \sum_j \beta_j$. Let $K = (k_{ij})$ be an $m \times n$
    matrix with all entries $k_{ij} \in \N \cup \{+\infty\}$. Then:
    \[
        \Vol(\mathcal{F}_{K,\bm\alpha,\bm\beta}) \, \geq \,
        \frac{f(S,m,n)}{e^{m+n-1}} \,\.
        \prod_{i=2}^m\. \frac{1}{\alpha_i} \, \prod_{j=1}^n \.\frac{1}{\beta_j} \,\.
        \cpc_{\bm\alpha\,\bm\beta}\Biggl(\prod_{i=1}^m \. \prod_{j=1}^n \. \frac{(x_iy_j)^{k_{ij}}-1}{\log(x_i\ts y_j)}\Biggr),
    \]
    where $S=$ {\rm Supp}$(K)$ and $f(S,m,n)$ are as above.
\end{theorem}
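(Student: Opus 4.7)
The plan is to combine the Main Theorem~\ref{thm:main_general_bound} (in the simplified form derived at the end of $\S$\ref{sec:gen_ct_proof}) with the limit formula
\[
\Vol(\mathcal{F}_{K,\bm\alpha,\bm\beta}) \, = \, f(S,m,n) \cdot \lim_{M\to\infty} \frac{\CT_{MK}(M\bm\alpha,M\bm\beta)}{M^{(m-1)(n-1)}}
\]
stated in $\S$\ref{ss:volume-thm}. First I would apply Theorem~\ref{thm:main_general_bound} (via the simplified bound) to the scaled marginals and matrix, obtaining
\[
\CT_{MK}(M\bm\alpha,M\bm\beta) \, \geq \, \frac{1}{e^{m+n-1}} \prod_{i=2}^m \frac{1}{M\alpha_i+1} \prod_{j=1}^n \frac{1}{M\beta_j+1} \cdot \cpc_{M\bm\alpha\,M\bm\beta}(P_{MK})\ts.
\]

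Second, I would compute the asymptotics of $\cpc_{M\bm\alpha\,M\bm\beta}(P_{MK})$ via the substitution $x_i = u_i^{1/M}$, $y_j = v_j^{1/M}$, which turns the capacity into
\[
\cpc_{M\bm\alpha\,M\bm\beta}(P_{MK}) \, = \, \inf_{\bm{u},\bm{v}>0} \. \frac{1}{\bm{u}^{\bm\alpha}\bm{v}^{\bm\beta}} \prod_{i=1}^m \prod_{j=1}^n \. \sum_{\ell=0}^{Mk_{ij}} (u_i v_j)^{\ell/M}\ts.
\]
The inner sum is a Riemann sum for $\int_0^{k_{ij}} t^s\,ds = \frac{t^{k_{ij}}-1}{\log t}$ with $t = u_i v_j$, so for each fixed $\bm u,\bm v > 0$,
\[
\frac{1}{M}\sum_{\ell=0}^{Mk_{ij}}(u_i v_j)^{\ell/M} \, \xrightarrow{M\to\infty} \, \frac{(u_iv_j)^{k_{ij}}-1}{\log(u_iv_j)}\ts,
\]
and the convergence is uniform on compact subsets of $\R_{>0}^{m+n}$ (for $k_{ij}=+\infty$ the same identity holds for $u_iv_j<1$, consistent with Lemma~\ref{lem:cap_lim_rational}). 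Multiplying over $i,j$ yields a factor $M^{mn}$, leading to
\[
\lim_{M\to\infty} \. \frac{\cpc_{M\bm\alpha\,M\bm\beta}(P_{MK})}{M^{mn}} \, = \, \cpc_{\bm\alpha\,\bm\beta}\Biggl(\prod_{i=1}^m \prod_{j=1}^n \frac{(x_iy_j)^{k_{ij}}-1}{\log(x_iy_j)}\Biggr).
\]

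Third, I would assemble the three pieces. Dividing the lower bound from Step~1 by $M^{(m-1)(n-1)}$ and using the identity $(m-1)(n-1) + (m+n-1) = mn$, the powers of $M$ cancel exactly, and the finite marginal factors satisfy
\[
\lim_{M\to\infty} \. \prod_{i=2}^m \frac{M}{M\alpha_i+1} \prod_{j=1}^n \frac{M}{M\beta_j+1} \, = \, \prod_{i=2}^m \frac{1}{\alpha_i}\prod_{j=1}^n \frac{1}{\beta_j}\ts.
\]
Taking $M\to\infty$ and multiplying by $f(S,m,n)$ then produces exactly the stated inequality.

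The main technical obstacle is justifying the interchange of $\lim_M$ and $\inf_{\bm u,\bm v}$ in Step~2, since the infimum is taken over the non-compact set $\R_{>0}^{m+n}$ (and over $u_iv_j\in(0,1)$ in coordinates with $k_{ij}=+\infty$). The standard remedy, in the spirit of Lemmas~\ref{lem:cap_lim_analytic} and~\ref{lem:cap_lim_rational}, is to argue that both $P_{MK}(\bm u^{1/M},\bm v^{1/M})/\bm u^{\bm\alpha}\bm v^{\bm\beta}$ and the limiting object $\prod_{i,j}\frac{(u_iv_j)^{k_{ij}}-1}{\log(u_iv_j)}/\bm u^{\bm\alpha}\bm v^{\bm\beta}$ blow up along every boundary direction (at the coordinate axes, at infinity, and at the hyperplanes $u_iv_j=1$ when $k_{ij}=+\infty$), so that both infima are attained on a common compact region and the uniform convergence there forces convergence of the infima. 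Once this is in place, the proof is a matter of bookkeeping the powers of $M$.
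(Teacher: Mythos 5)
Your proof follows the same route as the paper's: apply the Main Theorem to the dilated data, extract the $M^{mn}$ factor via the change of variables $x_i\mapsto x_i^{1/M}$, $y_j\mapsto y_j^{1/M}$, and pass the limit inside the capacity; you evaluate the limit as a Riemann sum where the paper uses L'Hopital, an equivalent computation. On the one point you flag as the main obstacle---exchanging $\lim_M$ with $\inf$---the cleaner justification (this is what lies behind the paper's remark that ``only a lower bound is needed'') is a pointwise inequality, not compactness: for every $t>0$ and every $k$, the sum $\frac{1}{M}\sum_{\ell=0}^{Mk}t^{\ell/M}$ dominates both the left and the right Riemann sums for $\int_0^k t^s\,ds$, and one of these is always an overestimate depending on the sign of $\log t$; hence $g_M\ge g_\infty$ pointwise, so $\inf g_M\ge\inf g_\infty$ for every $M$ and the desired inequality survives the limit with no interchange argument at all. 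Your compactness remedy is shakier: when $\lambda_i=\sum_j k_{ij}$ equals $\alpha_i$ the function $g_M$ does \emph{not} blow up as $u_i\to\infty$ (the leading $u_i$-powers cancel), and when some $k_{ij}=+\infty$ the blowup near $u_iv_j=1$ is not uniform in $M$, so a ``common compact region'' need not exist.
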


For the transportation polytopes, we get:

\begin{theorem}\label{t:volume-general}
    Let $\bm\alpha \in \N^m$ and $\bm\beta \in \N^n$ be such that \ts
    $\sum_i \alpha_i = \sum_j \beta_j$. Then we have:
    \[
        \Vol(\mathcal{T}_{\bm\alpha,\bm\beta}) \, \geq \,
        \frac{\sqrt{m^{n-1} n^{m-1}}}{e^{m+n-1}} \,\.
        \prod_{i=2}^m\. \frac{1}{\alpha_i} \, \prod_{j=1}^n \.\frac{1}{\beta_j} \,\.
        \cpc_{\bm\alpha\,\bm\beta}\Biggl(\prod_{i=1}^m \. \prod_{j=1}^n \. \frac{-1}{\log(x_i\ts y_j)}\Biggr),
    \]
where the \ts $\inf$ \ts in the capacity is over \ts $0 < \bm{x}, \ts \bm{y} < 1$.
\end{theorem}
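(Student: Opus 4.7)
The starting point is the identity stated above the theorem,
\[
\Vol(\mathcal{T}_{\bm\alpha,\bm\beta}) \, = \, \sqrt{m^{n-1}\ts n^{m-1}} \,\cdot\, \lim_{M \to \infty} \. \frac{\CT(M\bm\alpha,M\bm\beta)}{M^{(m-1)(n-1)}}\,.
\]
The plan is to apply the lower bound of Corollary~\ref{c:main-CT} to $\CT(M\bm\alpha,M\bm\beta)$, rescale variables so that the factors $\frac{1}{1-x_iy_j}$ of $P_\infty$ turn into $\frac{-1}{\log(x_iy_j)}$ in the $M\to\infty$ limit, and then verify that all powers of $M$ cancel.

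Applying Corollary~\ref{c:main-CT} and dividing by $M^{(m-1)(n-1)}$ gives
\[
\frac{\CT(M\bm\alpha,M\bm\beta)}{M^{(m-1)(n-1)}} \, \geq \, \frac{1}{e^{m+n-1}\ts M^{(m-1)(n-1)}} \prod_{i=2}^m \frac{1}{M\alpha_i+1} \prod_{j=1}^n \frac{1}{M\beta_j+1} \cdot \cpc_{M\bm\alpha\,M\bm\beta}(P_\infty).
\]
In the capacity I substitute $x_i = u_i^{1/M}$, $y_j = v_j^{1/M}$, which is a bijection of $(0,1)^{m+n}$ to itself and sends $\bm x^{M\bm\alpha}\bm y^{M\bm\beta}$ to $\bm u^{\bm\alpha}\bm v^{\bm\beta}$ while converting each factor $(1-x_iy_j)^{-1}$ of $P_\infty$ into $(1-(u_iv_j)^{1/M})^{-1}$. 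The elementary expansion $(u_iv_j)^{1/M} = 1 + M^{-1}\log(u_iv_j) + O(M^{-2})$ then gives the pointwise convergence
\[
\frac{1}{M^{mn}} \cdot \frac{P_\infty(\bm x,\bm y)}{\bm x^{M\bm\alpha}\bm y^{M\bm\beta}} \ \longrightarrow \ \frac{1}{\bm u^{\bm\alpha}\bm v^{\bm\beta}} \prod_{i,j} \frac{-1}{\log(u_iv_j)}
\]
on $(0,1)^{m+n}$.

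Granting the exchange of $\lim$ and $\inf$ (discussed below), this yields $M^{-mn}\cpc_{M\bm\alpha\,M\bm\beta}(P_\infty) \to \cpc_{\bm\alpha\,\bm\beta}\bigl(\prod_{i,j}\frac{-1}{\log(x_iy_j)}\bigr)$. Combining with the asymptotics $\prod_{i=2}^m(M\alpha_i+1)^{-1} \sim M^{-(m-1)}\prod_{i\geq 2}\alpha_i^{-1}$ and $\prod_j(M\beta_j+1)^{-1} \sim M^{-n}\prod_j\beta_j^{-1}$, the net power of $M$ in the lower bound equals $mn - (m-1) - n - (m-1)(n-1) = 0$, so all factors of $M$ cancel. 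Multiplying by $\sqrt{m^{n-1}\ts n^{m-1}}$ then produces exactly the claimed inequality.

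The main obstacle is the exchange of $\lim$ and $\inf$. Pointwise convergence of the rescaled integrand on $(0,1)^{m+n}$ is immediate, but the capacity is an infimum over a noncompact set, so convergence of integrands does not a priori imply convergence of infima. The remedy is an $M$-uniform coercivity bound: the rescaled objective blows up as $u_iv_j \to 1^-$ (where $1-(u_iv_j)^{1/M}\to 0$) and as any $u_i$ or $v_j$ tends to $0$ (through the factor $\bm u^{-\bm\alpha}\bm v^{-\bm\beta}$), and both blow-ups can be quantified independently of $M$ after the rescaling. This confines the minimizers to a common compact subset of $(0,1)^{m+n}$ on which convergence is locally uniform, permitting an argument in the style of Lemma~\ref{lem:cap_lim_analytic}. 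A cleaner alternative, sufficient since only the one-sided inequality is needed, is to take near-optimal $(\bm u,\bm v)$ for the limiting log-capacity, plug them into the pre-limit expression, and take $M\to\infty$ directly to obtain the required $\liminf$ bound.
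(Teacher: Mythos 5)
Your proposal is correct and follows essentially the same route as the paper: the paper proves Theorem~\ref{t:volume-general-flow} by applying Corollary~\ref{c:main-CT} to $\CT_{MK}(M\bm\alpha,M\bm\beta)$, performing the substitution $x_i\mapsto x_i^{1/M}$, $y_j\mapsto y_j^{1/M}$ inside the capacity (which is exactly what you do with $u_i,v_j$), tracking the powers of $M$, and invoking the same ``standard argument'' you describe — using near-optimal points for the limiting objective — to justify swapping $\lim$ and $\inf$ since only the lower bound is needed; Theorem~\ref{t:volume-general} is then obtained either by running the same argument with $P_\infty$ or by letting $K\to\infty$. Your direct treatment of the $K=\infty$ case and your explicit discussion of coercivity are slightly more detailed than the paper's, but the underlying mechanism and calculations (including the vanishing net power of $M$) are identical.
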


\smallskip

Note that \ts $\frac{(x_iy_j)^{k_{ij}}-1}{\log(x_iy_j)}$ \. becomes convex after you plug in
$e^x$ and $e^y$ and then take $\log$ on the outside.  Thus, one can easily
compute this capacity value using convex optimization as in the case of counting
contingency tables, see $\S$\ref{ss:finrem-alg}.

\smallskip

Before we present a proof, let us single out the case of uniform marginals which
are especially interesting and important in applications.

\smallskip

\begin{corollary}[Uniform marginals]\label{c:volume-uniform}
    For $\bm\alpha = (\alpha_0,\ldots,\alpha_0) \in \N^m$ and $\bm\beta = (\beta_0,\ldots,\beta_0) \in \N^n$, we have
    \[
        \Vol(\mathcal{T}_{\bm\alpha,\bm\beta})\, \geq \,
        \frac{(eN)^{(m-1)(n-1)}}{m^{(m-\frac{1}{2})(n-1)+1} \. n^{(n-\frac{1}{2})(m-1)}}\,.
    \]
\end{corollary}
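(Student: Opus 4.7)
The plan is to apply Theorem~\ref{t:volume-general} directly and evaluate the resulting capacity in closed form, exploiting the symmetry of the uniform case. With $\alpha_i = \alpha_0 = N/m$ and $\beta_j = \beta_0 = N/n$, the combinatorial prefactor becomes
\[
\frac{\sqrt{m^{n-1}\ts n^{m-1}}}{e^{m+n-1}} \cdot \prod_{i=2}^m \frac{1}{\alpha_0} \cdot \prod_{j=1}^n \frac{1}{\beta_0} \; = \; \frac{\sqrt{m^{n-1}\ts n^{m-1}}}{e^{m+n-1}} \cdot \frac{m^{m-1}\ts n^n}{N^{m+n-1}},
\]
and it remains to compute
\[
C \, := \, \cpc_{\bm\alpha\,\bm\beta}\Biggl(\prod_{i=1}^m \prod_{j=1}^n \frac{-1}{\log(x_iy_j)}\Biggr), \qquad 0 < x_i, y_j < 1.
\]

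To evaluate $C$, I would change variables $u_i := -\log x_i > 0$, $v_j := -\log y_j > 0$, so that the logarithm of the objective function becomes
\[
\Phi(\bm u, \bm v) \; = \; -\sum_{i=1}^m \sum_{j=1}^n \log(u_i + v_j) \, + \, \alpha_0 \sum_{i=1}^m u_i \, + \, \beta_0 \sum_{j=1}^n v_j.
\]
Since $(u,v) \mapsto -\log(u+v)$ is convex on $\R_{>0}^2$ and the linear terms are affine, $\Phi$ is strictly convex on $\R_{>0}^{m+n}$. It is also symmetric under independent permutations of the $u_i$'s and of the $v_j$'s, so its unique minimizer satisfies $u_1 = \cdots = u_m = u$ and $v_1 = \cdots = v_n = v$. (This is the analytic analogue of Lemma~\ref{lem:cap_symm}, which handles only polynomial inputs.) At such points, $\Phi = -mn\log(u+v) + N(u+v)$, depending only on $w := u+v$. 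Minimizing $-mn\log w + Nw$ by one-variable calculus gives $w = mn/N$ and optimal value $mn\log(eN/mn)$, so
\[
C \; = \; \left(\frac{eN}{mn}\right)^{mn}.
\]

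Finally, substituting $C$ into Theorem~\ref{t:volume-general} and collecting exponents is routine: the exponent of $e$ becomes $mn - (m+n-1) = (m-1)(n-1)$, the exponent of $N$ becomes $mn - (m+n-1) = (m-1)(n-1)$, the exponent of $m$ becomes $\tfrac{n-1}{2} + (m-1) - mn = -(m-\tfrac12)(n-1) - 1$, and the exponent of $n$ becomes $\tfrac{m-1}{2} + n - mn = -(n-\tfrac12)(m-1)$. This yields exactly
\[
\Vol(\mathcal{T}_{\bm\alpha,\bm\beta}) \; \geq \; \frac{(eN)^{(m-1)(n-1)}}{m^{(m-\frac12)(n-1)+1}\ts n^{(n-\frac12)(m-1)}},
\]
as claimed. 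The only nontrivial step is the symmetric-minimizer reduction for $C$; once that is in hand, the remainder is elementary calculus and bookkeeping of exponents.
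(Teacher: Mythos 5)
Your proposal is essentially the paper's proof: both change to logarithmic variables, observe the objective is convex, locate the symmetric critical point with $u_i+v_j = mn/N$ (the paper writes this as $x_i=y_j=-mn/(2N)$), and then collect exponents. The arithmetic in your final bookkeeping is correct.

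One small slip worth flagging: the function $\Phi(\bm u,\bm v)=-\sum_{i,j}\log(u_i+v_j)+\alpha_0\sum u_i + \beta_0\sum v_j$ is convex but \emph{not} strictly convex — its Hessian annihilates the direction $(1,\ldots,1,-1,\ldots,-1)$, since each summand $-\log(u_i+v_j)$ is flat along that line. So "unique minimizer" and hence the automatic "$u_1=\cdots=u_m$, $v_1=\cdots=v_n$" conclusion do not literally follow from strict convexity. The fix is easy and doesn't change the computation: either note (as the paper implicitly does) that exhibiting any critical point of a convex function gives a global minimizer, and simply verify that $u_i\equiv v_j\equiv mn/(2N)$ is one; or argue that, by convexity plus symmetry, averaging an optimizer over the $S_m\times S_n$ orbit produces a symmetric optimizer, so one may restrict the infimum to symmetric points without invoking uniqueness. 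Either repair is one sentence.
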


\smallskip

Note that the results in~\cite{CM09} give the exact asymptotics only for uniform
marginals with \. $\max\bigl\{\frac{m}{n}, \. \frac{n}{m}\bigr\}=O(\log n)$, while the lower bound above
applies unconditionally.

\smallskip

\begin{example}\label{ex:volume-Birkhoff} {\rm
    For the Birkhoff polytope, the corollary gives:
\[
    \Vol(\mathcal{B}_n) \, \geq \, \frac{(en)^{(n-1)^2}}{n^{2(n-\frac{1}{2})(n-1)+1}}
    \, = \, e^{(n-1)^2} \ts n^{-(n^2-n+1)} \ts.
\]
This lower bound can be compared with the exact asymptotics given in~\cite{CM09}\ts{}:
$$
\Vol(\mathcal{B}_n) \, \sim \, C\cdot (2\pi)^{-n} \. e^{n^2+O(1)}\. n^{-(n-1)^2} \ts,
$$
for some known~$C>0$.
Note that our lower bound coincides with the actual asymptotic bound in the first two terms:
$$
\log \Vol(\mathcal{B}_n) \, = \, -n^2\log n \. + \. n^2 \. + \. O(n \log n).
$$
}
\end{example}

\medskip

\subsection{Proof of Theorem~\ref{t:volume-general-flow}}
By Corollary~\ref{c:main-CT}, we have:
\[
    \CT_{MK}(M\bm\alpha,M\bm\beta) \, \geq \, e^{1-m-n} \, \prod_{i=2}^m \. \frac{1}{M\alpha_i+1} \,
    \prod_{j=1}^n \. \frac{1}{M\beta_j+1} \, \cpc_{M\bm\alpha\,M\bm\beta}\Biggl(\prod_{i=1}^m \.\prod_{j=1}^n \. \frac{1-(x_i\ts y_j)^{Mk_{ij}+1}}{1-x_i\ts y_j}\Biggr).
\]
The first thing to note is that the constant in front of the capacity is asymptotically
\[
    e^{1-m-n} \, \prod_{i=2}^m \. \frac{1}{M\alpha_i+1} \,
    \prod_{j=1}^n \. \frac{1}{M\beta_j+1} \, \sim \, M^{1-m-n} \, e^{1-m-n} \,
    \prod_{i=2}^m \. \frac{1}{\alpha_i} \. \prod_{j=1}^n \. \frac{1}{\beta_j}\..
\]
Since the constant in the denominator of the volume limit expression is order \ts $M^{(m-1)(n-1)}$,
this means we need the order of the capacity term to be \ts $M^{mn}$. This is in fact the case,
and also we can get another capacity expression for the capacity term divided by \ts $M^{mn}$.
Specifically, note that
\[
    \frac{1}{M^{mn}} \, \inf_{\bm{x},\bm{y} > 0} \, \prod_{i=1}^m \. \prod_{j=1}^n \. \frac{(1-(x_i\ts y_j)^{Mk_{ij}+1})}{x_i^{M\alpha_i}\ts y_j^{M\beta_j} (1-x_iy_j)}
    \, = \, \inf_{\bm{x},\bm{y} > 0} \, \prod_{i=1}^m \. \prod_{j=1}^n \. \frac{1}{x_i^{\alpha_i} \ts y_j^{\beta_j}} \. \cdot \. \frac{(1-(x_i\ts y_j)^{k_{ij}+M^{-1}})}{M(1-(x_iy_j)^{M^{-1}})}\,.
\]
Next, we pass the limit on $M$ into the infimum in the capacity (swapping $\lim$ and $\inf$ is valid here by a standard argument, since we only need to prove a lower bound).
We then have:
\[
\begin{split}
    \lim_{M \to \infty} \, \frac{(1-(x_i\ts y_j)^{k_{ij}+M^{-1}})}{M(1-(x_iy_j)^{M^{-1}})} \, &= \,
    \lim_{M \to 0^+} \, \frac{M(1-e^{(k_{ij}+M) \log(x_i\ts y_j)})}{1-e^{M \log(x_i\ts y_j)}} \\
        &= \, \lim_{M \to 0^+} \, \frac{1 - (1 + M \log(x_i\ts y_j)) \. \cdot \. e^{(k_{ij}+M) \log(x_i\ts y_j)}}{-\log(x_i\ts y_j) \. \cdot \. e^{M \log(x_iy_j)}} \\
        &= \, \frac{(x_i\ts y_j)^{k_{ij}} - 1}{\log(x_i\ts y_j)}\..
\end{split}
\]
With this, we have
\[
    \Vol(\mathcal{F}_{K,\bm\alpha,\bm\beta}) \, \geq \,
    \frac{f(S,m,n)}{e^{m+n-1} \, \prod_{i=2}^m \ts \alpha_i \, \prod_{j=1}^n \ts \beta_j} \.
    \cpc_{\bm\alpha\,\bm\beta}\Biggl(\prod_{i=1}^m \. \prod_{j=1}^n \, \frac{(x_i\ts y_j)^{k_{ij}} - 1}{\log(x_i\ts y_j)}\Biggr),
\]
as desired.
 \ $\sq$

\medskip

\subsection{Proof of Theorem~\ref{t:volume-general}}

The same proof works here as was used above for Theorem~\ref{t:volume-general-flow}. The main difference is that we consider
\[
    \cpc_{M\bm\alpha\,M\bm\beta}\Biggl(\prod_{i=1}^m \.\prod_{j=1}^n \. \frac{1}{1-x_i\ts y_j}\Biggr),
\]
and so the infimum is over \ts $0 < \bm{x},\ts \bm{y} < 1$. Another way to see this is as a limit of the lower bound of Theorem~\ref{t:volume-general-flow} as \ts $K \to \infty$.

\medskip

\subsection{Proof of Corollary~\ref{c:volume-uniform}}

We can explicitly compute the capacity in this case. Consider
\[
    \log\left(\frac{\prod_{i=1}^m \. \prod_{j=1}^n \.
    \frac{-1}{\log(e^{x_i\ts +\ts y_j})}}{e^{\langle \bm{x}, \bm\alpha \rangle \ts +\ts \langle \bm{y}, \bm\beta \rangle}}\right)
    \, = \, -\langle \bm{x}, \bm\alpha \rangle \ts - \ts \langle \bm{y}, \bm\beta \rangle \. - \. \sum_{i=1}^m \. \sum_{j=1}^n \. \log(-x_i-y_j)\ts.
\]
The gradient of this expression at $x = y = -\frac{mn}{2N}$ where $N = m \cdot \alpha_0 = n \cdot \beta_0$ is then computed as
\[
  \Biggl(-\alpha_0 - \sum_{j=1}^n \frac{1}{x_1+y_j}\. , \. \ldots \. , \. -\beta_0 - \sum_{i=1}^m \frac{1}{x_i+y_1}\,,
    \. \ldots\. \Biggr)\Biggr|_{x \ts = \ts y \ts = \ts -\frac{mn}{2N}}
\]
So the above expression is minimized at \ts $x = y = -\frac{mn}{2N}$, which means the above capacity value is
\[
    \left.\frac{\prod_{i=1}^m \.\prod_{j=1}^n \. \frac{-1}{x_i+y_j}}{e^{\langle \bm{x}, \bm\alpha \rangle + \langle \bm{y},
    \bm\beta \rangle}}\right|_{x \ts =\ts y \ts = \ts -\frac{mn}{2N}} \, = \,
    \frac{(\frac{N}{mn})^{mn}}{e^{-mn}} \, = \, \left(\frac{eN}{mn}\right)^{mn}.
\]
Combining this with the above lower bound gives an explicit lower bound on the volume in the uniform case:
\[
    \Vol(\mathcal{T}_{\bm\alpha,\bm\beta}) \, \geq \,
    \frac{\sqrt{m^{n-1}\ts n^{m-1}}}{e^{m+n-1} \left(\frac{N}{m}\right)^{m-1} \left(\frac{N}{n}\right)^n} \,
    \left(\frac{eN}{mn}\right)^{mn} \, = \,
    \frac{(eN)^{(m-1)(n-1)}}{m^{(m-\frac{1}{2})(n-1)+1} \. n^{(n-\frac{1}{2})(m-1)}}\,,
\]
as desired.  \ $\sq$

\bigskip

\section{Uniform marginals}\label{sec:uniform}

In the case of uniform marginals, i.e., $\bm\alpha$ and $\bm\beta$ are both multiples
of the all-ones vector, we can explicitly compute the capacity value in the
upper and lower bounds of Theorem~\ref{thm:main_general_bound}. This then gives
the explicit upper and lower bounds for the number of contingency tables.

\smallskip

\subsection{Explicit upper and lower bounds} \label{ss:uniform-thm}
In this and the next section, we adopt the following convenient shorthand for bounds
on \ts $\CT(\bm{\alpha},\bm{\beta})$:

\medskip

{\small

\qquad \text{UB1} \ is the Barvinok first upper bound (see Theorem~\ref{thm:barvinok_general_bound}),

\vskip0.05cm

\qquad \text{LB1} \ is the Barvinok first lower bound (ibid.),

\vskip0.05cm

\qquad \text{UB2} \ is the Barvinok second upper bound (see Theorem~\ref{thm:barvinok_cs_bound}),

\vskip0.05cm

\qquad \text{LB2} \ is the Barvinok second lower bound (ibid.),

\vskip0.05cm

\qquad \text{UB3} \ is the Shapiro upper bound (see Remark~\ref{rem:main-shapiro}), and

\vskip0.05cm

\qquad {\text{New LB}} \ is our lower bound in the Main Theorem~\ref{thm:main_general_bound}.
}

\medskip

\begin{theorem}\label{t:uniform}
    Let \ts $\bm{\alpha} = (s,\ldots,s)\in \N^m$, \ts $\bm{\beta}=(t,\ldots,t)\in \N^n$, where \ts
    $m,n,s,t \in \N$, such that \ $ms=nt=N$, and $m\le n$. Then we have the following bounds on \ts
    $\CT(\bm{\alpha},\bm{\beta}):$\footnote{The LB1 is given only for $m+n\ge 10$. }
$$
\aligned
\text{\rm UB1}  \ & = \  \frac{(N+mn)^{N+mn}}{N^N \ts (mn)^{mn}}\\
\text{\rm LB1} \ & = \
\frac{2^{m+n-2}\. \Gamma(\frac{m+n}{2}) \. N! \, (N+mn)! \ s^{sm}\. t^{tn}}
            {e^5 \. \pi^{\frac{m+n-2}{2}} \. (N+mn)^{m+n} \. (N+1)^{m+n-1}\. N^{2N}  \. (mn)^{2m+2n-1}\. (mn)! \ (s!)^{m} \. (t!)^{n}} \\
\text{\rm UB2}  \ & = \  \binom{N+mn-1}{N}\\
\text{\rm LB2} \ & = \ \text{\rm UB2} \ \binom{N+m-1}{m-1}^{-1} \binom{N+n-1}{n-1}^{-1} \.
        \frac{N!\. s^{sm}}{N^N \. (s!)^m} \\
\text{\rm UB3}  \ & = \  \text{\rm UB1} \,\. \frac{1}{\left(1+\frac{N}{mn}\right)^{m+n-1}} \ \.
= \ \. \frac{(N+mn)^{N+(m-1)(n-1)}}{N^N (mn)^{(m-1)(n-1)}}\\
\text{\rm New LB} \ & = \ \text{\rm UB1} \,\. \frac{s^{s(m-1)}\ts t^{tn}}{(s+1)^{(s+1)(m-1)}\ts (t+1)^{(t+1)n}}
\endaligned
$$
\end{theorem}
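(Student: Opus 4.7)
Each of the six bounds is already recalled earlier in the paper as an explicit prefactor times a capacity expression; the theorem is proved by substituting $\alpha_i=s$, $\beta_j=t$ into those formulas and simplifying. The only genuine computation is the capacity of $P_\infty$ on uniform marginals (which is itself $\mathrm{UB1}$); the rest is algebraic bookkeeping, and the same symmetry argument handles the capacity of $H_N$.

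\textbf{Step 1 ($\mathrm{UB1}$ = capacity of $P_\infty$).}
The function $P_\infty(\bm{x},\bm{y})=\prod_{i,j}(1-x_iy_j)^{-1}$ is separately symmetric in $\bm{x}$ and in $\bm{y}$, and the marginals $\bm\alpha=s\bm{1}$, $\bm\beta=t\bm{1}$ are uniform. Either by applying Lemma~\ref{lem:cap_symm} blockwise (with Lemma~\ref{lem:cap_lim_rational} to pass from the polynomial truncations of $(1-x_iy_j)^{-1}$ to the rational function), or by directly invoking log-convexity of the objective in $(\log\bm{x},\log\bm{y})$ together with its permutation symmetry, the infimum collapses to the diagonal:
\[
\cpc_{\bm\alpha\,\bm\beta}(P_\infty) \, = \, \inf_{x,y\in(0,1)} \. x^{-N}\ts y^{-N}(1-xy)^{-mn} \, = \, \inf_{u\in(0,1)} \. u^{-N}(1-u)^{-mn}.
\]
The one-variable problem on the right is solved at $u^\ast=N/(N+mn)$ and evaluates to $(N+mn)^{N+mn}/\bigl(N^N(mn)^{mn}\bigr)$, which is $\mathrm{UB1}$ and, by the upper inequality of Theorem~\ref{thm:main_general_bound}, is the capacity factor common to $\mathrm{LB1}$, $\mathrm{New\ LB}$, and $\mathrm{UB3}$.

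\textbf{Step 2 (the other five bounds).}
For $\mathrm{LB1}$, substitute $\alpha_i=s$, $\beta_j=t$ into $C_{\mathrm{Barv}}$ of Theorem~\ref{thm:barvinok_general_bound}, turning $\prod_i(\alpha_i^{\alpha_i}/\alpha_i!)\prod_j(\beta_j^{\beta_j}/\beta_j!)$ into $s^{sm}t^{tn}/\bigl((s!)^m(t!)^n\bigr)$, multiply by the capacity from Step~1, and collect the resulting powers of $mn$, $N+mn$, and $N+1$. For $\mathrm{New\ LB}$, Theorem~\ref{thm:main_general_bound} with $K=\infty$ forces $\lambda_i=\gamma_j=+\infty$, so $a_i=s$ and $b_j=t$; the prefactor collapses to $s^{s(m-1)}t^{tn}/\bigl((s+1)^{(s+1)(m-1)}(t+1)^{(t+1)n}\bigr)$, and multiplying by $\mathrm{UB1}$ gives the claim. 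For $\mathrm{UB2}$, the same symmetry argument applied to $H_N(\bm{x},\bm{y})=h_N(\bm{x}\cdot\bm{y})$ yields $\cpc_{\bm\alpha\,\bm\beta}(H_N)=\inf_{x,y>0} h_N(xy,\ldots,xy)/(xy)^N=\binom{N+mn-1}{N}$, since $h_N$ in $mn$ variables has exactly this many monomials of degree $N$. For $\mathrm{LB2}$, plug $\alpha_i=s$, $\beta_j=t$ into $C_H$ of Theorem~\ref{thm:barvinok_cs_bound}, take the $\bm\alpha$-side of the maximum (which is always a valid lower bound), and multiply by $\mathrm{UB2}$. For $\mathrm{UB3}$, by symmetry the typical matrix optimizing $\cpc_{\bm\alpha\,\bm\beta}$ in Remark~\ref{rem:main-shapiro} has $z_{ij}=N/(mn)$ for every $(i,j)$; each spanning tree of $K_{m,n}$ uses $m+n-1$ edges, so the Shapiro correction is the constant $\bigl(1+N/(mn)\bigr)^{-(m+n-1)}=\bigl(mn/(N+mn)\bigr)^{m+n-1}$, and $\mathrm{UB1}$ multiplied by this telescopes to the stated form.

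\textbf{Main obstacle.}
The only step that is not purely routine is the simplification of $\mathrm{LB1}$, where the three powers of $mn$ (from $(mn)^{mn}$ in the capacity and from $(mn)^{mn}/(mn)!$ and $(mn)^{2(m+n-1)}$ in $C_{\mathrm{Barv}}$) and the three powers of $N+mn$ must be combined carefully into $(mn)^{2m+2n-1}(N+mn)^{m+n}$ in the denominator. Everything else is one-variable calculus after the symmetry reduction of Step~1.
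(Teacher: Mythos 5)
Your proposal is essentially correct and follows the same overall plan as the paper: substitute uniform marginals into the previously stated bound theorems, compute the two capacity values, and simplify the Shapiro correction term. The algebraic simplifications for LB1, New~LB and UB3 are carried out correctly (your Step~2 bookkeeping for LB1 is exactly the intended calculation), and the formulas check out.

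The main difference from the paper is in how the two capacity values are established. The paper proves Lemma~\ref{l:Cap1-uniform} ($\cpc_{\bm\alpha\,\bm\beta}(P_\infty)$) by invoking Barvinok's duality on the typical matrix, maximizing the strictly convex entropy functional $g(Z)$ over $\mathcal{T}_{\bm\alpha,\bm\beta}$, and proves Lemma~\ref{l:Cap2-uniform} ($\cpc_{\bm\alpha\,\bm\beta}(H_N)$) via a gradient-at-$\bm{1}$ identity (Fact~2.10 of~\cite{GL18}). You instead reduce both to a one-variable infimum by a symmetry-plus-convexity argument. This is valid and arguably more elementary, but be careful with the framing: Lemma~\ref{lem:cap_symm} as stated requires a \emph{fully} symmetric polynomial and a marginal vector that is a multiple of $\bm{1}$, neither of which holds here (the vector is $(s,\ldots,s,t,\ldots,t)$ with $s\neq t$ in general, and $P_\infty$ and $H_N$ are only symmetric separately in the $\bm{x}$ and $\bm{y}$ blocks). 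The correct justification is the one you list second: the objective is convex in $(\log\bm{x},\log\bm{y})$ and invariant under the product $S_m\times S_n$ of independent permutations, which together with uniqueness of the minimizer up to the scaling degeneracy forces the infimum onto the diagonal. That is the argument to lean on, not a "blockwise" reading of the cited lemma.

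There is one genuine (if minor) gap: the theorem asserts LB2 as an \emph{equality}, not merely a lower bound. Barvinok's $C_\mathrm{H}(\bm\alpha,\bm\beta)$ contains a $\max$ of the row- and column-side products, so to obtain the stated formula one must show that, under the hypothesis $m\le n$ (hence $s\ge t$), the maximum is attained on the $\bm\alpha$ side, i.e.\ $s^{sm}/(s!)^m \ge t^{tn}/(t!)^n$. The paper flags this explicitly ("For LB2, we use the fact that $s\ge t$"). Your proposal only observes that taking the $\bm\alpha$ side gives a valid lower bound, which shows "$\ge$" but not "$=$". To close the gap, note that with $ms=nt=N$ the claim is equivalent to monotonicity in $x$ of $\frac{1}{x}\log\frac{x^x}{x!}$ on the positive integers, which holds; this is the single piece of content your proof omits.
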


\smallskip

Note that \. UB1 $>$ UB2 \. in this case.  This a general fact which follows
from~\eqref{eq:H-P}.  Note also that we trivially have \.  UB2 $> \CT(\bm{\alpha},\bm{\beta})$, since
\ts UB2 \ts counts \emph{all} tables with sum~$N$ irrespective of the row/column
constraints.  That makes only the lower bounds nontrivial in this case, and possibly UB3 when
$s$ and~$t$ are large.  This is confirmed by the numerical results in the next section.

\medskip

\subsection{Capacity calculations}\label{ss:uniform-calculations}
Recall the notation
\[
    P_\infty(\bm{x}, \bm{y}) \, := \, \prod_{i=1}^m \prod_{j=1}^n (1-x_iy_j)^{-1}.
\]
The explicit computation of the capacity of $P_\infty$ in the uniform case is then given by
following result.

\smallskip

\begin{lemma}\label{l:Cap1-uniform}
    Let \ts $\bm{\alpha} = (s,\ldots,s)\in \N^m$, \ts $\bm{\beta}=(t,\ldots,t)\in \N^n$, where \ts
    $m,n,s,t \in \N$, such that \ $ms=nt=N$. Then we have:
    \[
        \cpc_{\bm\alpha\,\bm\beta}(P_\infty) \, = \,  \frac{(N+mn)^{N+mn}}{N^N (mn)^{mn}}\,.
    \]
\end{lemma}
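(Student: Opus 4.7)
The plan is to exploit the $S_m\times S_n$ symmetry coming from uniformity of $\bm\alpha,\bm\beta$ in order to collapse the $(m+n)$-variable infimum to a one-variable calculus problem. Parametrize $x_i=e^{u_i}$ and $y_j=e^{v_j}$, and consider
\[
g(\bm u,\bm v)\,:=\,\log\bigl(\bm x^{-\bm\alpha}\bm y^{-\bm\beta}\,P_\infty(\bm x,\bm y)\bigr)\,=\,-s\sum_{i=1}^m u_i\,-\,t\sum_{j=1}^n v_j\,-\,\sum_{i,j}\log\bigl(1-e^{u_i+v_j}\bigr),
\]
defined on the open convex set $U:=\{u_i+v_j<0\text{ for all }i,j\}$. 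I will first check that $g$ is convex on $U$. The linear part is trivially convex; for each summand $-\log(1-e^{u+v})$, a direct calculation gives the Hessian $\tfrac{e^{u+v}}{(1-e^{u+v})^2}\bigl(\begin{smallmatrix}1&1\\1&1\end{smallmatrix}\bigr)$, which is positive semidefinite. Hence $g$ is convex.

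Next, precisely because $\bm\alpha=(s,\ldots,s)$ and $\bm\beta=(t,\ldots,t)$ are uniform, $g$ is invariant under the $S_m\times S_n$ action permuting the $u_i$'s and $v_j$'s independently. A short boundary check shows that $g\to+\infty$ as any $u_i+v_j\to 0^-$ (from the $-\log(1-e^{u_i+v_j})$ term) and as any $u_i$ or $v_j$ tends to $-\infty$ (from $-s\sum u_i$ or $-t\sum v_j$, using $s,t>0$), so the infimum is attained at an interior point $(\bm u^\star,\bm v^\star)$. Averaging over the orbit of $(\bm u^\star,\bm v^\star)$ gives an $S_m\times S_n$-fixed point, and by Jensen's inequality applied to the convex $g$, this average has value no larger than $g(\bm u^\star,\bm v^\star)$ and is therefore itself a minimizer. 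Thus the infimum is realized at $u_1=\cdots=u_m=u$ and $v_1=\cdots=v_n=v$, and there
\[
\bm x^{-\bm\alpha}\bm y^{-\bm\beta}P_\infty(\bm x,\bm y)\,=\,\frac{1}{x^{ms}y^{nt}(1-xy)^{mn}}\,=\,\frac{1}{z^N(1-z)^{mn}}
\]
after setting $z:=xy\in(0,1)$ and using $ms=nt=N$.

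The remaining one-variable optimization of $-N\log z-mn\log(1-z)$ is elementary: the critical point is $z=N/(N+mn)$, hence $1-z=mn/(N+mn)$, and substituting yields
\[
\frac{1}{z^N(1-z)^{mn}}\,=\,\left(\frac{N+mn}{N}\right)^{\!N}\left(\frac{N+mn}{mn}\right)^{\!mn}\,=\,\frac{(N+mn)^{N+mn}}{N^N(mn)^{mn}},
\]
which is the asserted value. The only step with any subtlety is the symmetric-point reduction; if one prefers to avoid the convex-averaging argument, one can instead write down the first-order KKT equations, namely $s/x_i=\sum_j y_j/(1-x_iy_j)$ and its dual for $y_j$, note the gauge symmetry $(x_i,y_j)\mapsto(cx_i,c^{-1}y_j)$ under which both objective and constraints are invariant, and verify directly that the constant solution solves these equations uniquely modulo this gauge.
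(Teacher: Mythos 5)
Your proof is correct, and it takes a genuinely different (and more self-contained) route than the paper. The paper's proof simply invokes Barvinok's duality result from \cite{Bar12,Bar17}: the capacity equals $\exp g(Z)$ where $g(Z)=\sum_{i,j}(z_{ij}+1)\log(z_{ij}+1)-z_{ij}\log z_{ij}$ is maximized over the transportation polytope $\mathcal{T}_{\bm\alpha,\bm\beta}$, and then uses strict concavity plus symmetry to locate the optimum at the uniform matrix $z_{ij}=N/mn$. You instead attack the capacity infimum directly: changing to log-coordinates, verifying convexity of the objective by computing a Hessian, averaging a minimizer over the $S_m\times S_n$-orbit via Jensen to force a symmetric critical point, and finishing with one-variable calculus. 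Your approach avoids the cited duality machinery entirely, which is a plus; the paper's is shorter given the black box and hands you the ``typical matrix'' interpretation that Remark~\ref{rem:main-shapiro} and Theorem~\ref{t:random-typical} also use.

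One technical point deserves a patch. Your coercivity claim --- that $g\to+\infty$ as $\|(\bm u,\bm v)\|\to\infty$ --- is not literally true on all of $U$, because of the gauge symmetry $(\bm u,\bm v)\mapsto(\bm u+c\bm 1,\bm v-c\bm 1)$ under which $g$ is invariant (precisely because $\sum\alpha_i=\sum\beta_j$). So $g$ has noncompact sublevel sets and the minimizer is not an isolated point. The standard fix, which you in fact gesture at in your closing remark, is to fix the gauge (say $\sum_i u_i=0$) before running the coercivity argument; on that slice $g$ is coercive and the infimum is attained, and the Jensen-averaging argument then proceeds unchanged since the slice and the objective are both $S_m\times S_n$-invariant. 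With that small adjustment the proof is complete and correct.
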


\begin{proof}  In~\cite{Bar12,Bar17}, using prior work and duality, Barvinok shows (in greater generality)
that the capacity bound is equal to $\exp g(Z)$, where  $Z=(z_{ij})$ is the
unique maximum of the strictly convex function
$$
g(Z) \, := \, \sum_{i=1}^{m}\. \sum_{j=1}^{n} \. (z_{ij}+1) \log (z_{ij}+1) \. - \.
z_{ij} \log z_{ij}\ts,
$$
and the maximum is over the transportation polytope \ts $\mathcal{T}_{\alpha,\beta}$.
By the symmetry, this unique maximum is attained at \ts $z_{ij}=N/mn$, and we have:
$$
g(Z) \, = \, mn \ts \Biggl(\frac{N}{mn}+1\Biggr) \. \log \Biggl(\frac{N}{mn}+1\Biggr)
\, - \, mn \.\frac{N}{mn} \. \log \frac{N}{mn}\..
$$
Therefore,
$$
\cpc_{\bm\alpha\,\bm\beta}(P_\infty) \, = \, e^{g(Z)}\, = \,  \frac{(N+mn)^{N+mn}}{N^N (mn)^{mn}}\,,
$$
as desired.
\end{proof}

\smallskip

Additionally, we need to be able to compute the capacity of the complete symmetric polynomials,
used in Theorem~\ref{thm:barvinok_cs_bound}. Recall from Section~\ref{ss:compare-2} the notation
\[
    H_N(\bm{x},\bm{y}) \, = \, \sum_{K} \.
    \prod_{i=1}^m \. \prod_{j=1}^n \. (x_iy_j)^{k_{ij}} \, = \, h_N(\bm{x}\cdot\bm{y}),
\]
where the sum is over all \ts $K=(k_{ij})$ \ts with total sum $N$ of the entries:
\ts $\sum_{i,j} k_{ij} = N$, and $h_N$ is the complete homogeneous symmetric polynomial of
degree $N$ in $mn$ variables \ts $x_iy_j$. The explicit computation of the capacity of $H_N$
in the uniform case is then given as follows.

\smallskip

\begin{lemma}\label{l:Cap2-uniform}
    Let \ts $\bm{\alpha} = (s,\ldots,s)\in \N^m$, \ts $\bm{\beta}=(t,\ldots,t)\in \N^n$, where \ts
    $m,n,s,t \in \N$, such that \ $ms=nt=N$. Then we have:
    \[
        \cpc_{\bm\alpha\,\bm\beta}(H_N) \, = \, H_N(\bm{1},\bm{1}) \, = \, \binom{N+mn-1}{N}.
    \]
\end{lemma}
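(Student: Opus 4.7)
The plan is to exploit the bi-homogeneity and the separate symmetries of $H_N$ in $\bm{x}$ and in $\bm{y}$, and reduce the $(m+n)$-variable optimization to a trivial univariate one via two applications of Lemma~\ref{lem:cap_symm}. Observe first that $H_N(\bm{x},\bm{y}) = h_N(\bm{x}\cdot \bm{y})$ is a polynomial in the $mn$ products $x_iy_j$, each monomial of $h_N$ being of total degree $N$; hence $H_N$ is bihomogeneous of bidegree $(N,N)$, symmetric in $\bm{x}$ for every fixed $\bm{y}$, and symmetric in $\bm{y}$ for every fixed $\bm{x}$.

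Because the capacity is an infimum over the product $\R_{>0}^m \times \R_{>0}^n$, I would evaluate it iteratively. For each fixed $\bm{y}>0$, the polynomial $p_{\bm{y}}(\bm{x}) := H_N(\bm{x},\bm{y})$ is symmetric in $\bm{x}$, and $\bm\alpha = s\cdot \bm{1}$ with $ms=N$, so Lemma~\ref{lem:cap_symm} gives
\[
\inf_{\bm{x}>0}\frac{H_N(\bm{x},\bm{y})}{\bm{x}^{\bm\alpha}} \;=\; \inf_{x>0}\frac{H_N(x\bm{1},\bm{y})}{x^{N}}.
\]
By the homogeneity of $h_N$, one has $H_N(x\bm{1},\bm{y}) = h_N(xy_1,\ldots,xy_1,\ldots,xy_n,\ldots,xy_n) = x^N\,\tilde h(\bm{y})$, where $\tilde h(\bm{y}) := h_N(y_1,\ldots,y_1,\ldots,y_n,\ldots,y_n)$ with each $y_j$ repeated $m$ times. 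The $x^N$ cancels exactly, so the inner infimum collapses to $\tilde h(\bm{y})$.

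Now $\tilde h$ is symmetric in $\bm{y}$ and $\bm\beta = t\cdot \bm{1}$ with $nt=N$, so a second application of Lemma~\ref{lem:cap_symm} together with the homogeneity of $h_N$ yields
\[
\cpc_{\bm\alpha\,\bm\beta}(H_N) \;=\; \inf_{\bm{y}>0}\frac{\tilde h(\bm{y})}{\bm{y}^{\bm\beta}} \;=\; \inf_{y>0}\frac{h_N(y,\ldots,y)}{y^{N}} \;=\; h_N(1,\ldots,1).
\]
The final equality $h_N(1,\ldots,1) = \binom{N+mn-1}{N}$ is the standard count of degree-$N$ monomials in $mn$ variables, which also equals $H_N(\bm{1},\bm{1})$.

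There is essentially no obstacle; the only small point to keep honest is that Lemma~\ref{lem:cap_symm} is stated for polynomials symmetric in all variables, whereas we apply it to $H_N$ viewed as a symmetric polynomial in one group of variables at a time. This is fine because the capacity factors as a nested infimum and at each stage the relevant polynomial (with the other group of variables held fixed at a positive value) has positive coefficients and full symmetry in the group being optimized over.
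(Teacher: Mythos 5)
Your argument is correct, and it takes a genuinely different route from the paper's. The paper proves the first equality by computing the gradient of $\log H_N$ at $(\bm{1},\bm{1})$, showing it equals $(\bm\alpha,\bm\beta)$, and then invoking the critical-point criterion (Fact~2.10 of~\cite{GL18}) that $\cpc_{\bm\gamma}(p)=p(\bm{1})$ when $\left.\nabla\log p\right|_{\bm{1}}=\bm\gamma$. You instead write the capacity as a nested infimum, apply Lemma~\ref{lem:cap_symm} first over $\bm{x}$ (valid since $p_{\bm{y}}:=H_N(\cdot,\bm{y})$ is symmetric in $\bm{x}$ and $\bm\alpha=s\cdot\bm{1}$ with $ms=N$), use the homogeneity $H_N(x\bm{1},\bm{y})=x^N\tilde h(\bm{y})$ to cancel the $x^N$, then apply Lemma~\ref{lem:cap_symm} again over $\bm{y}$ and use homogeneity once more. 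The trade-off: the paper's approach is shorter and leans on a slicker abstract fact about where the capacity infimum is attained; yours is more self-contained and constructive, relying only on the symmetry-reduction lemma already quoted in the paper and the elementary degree-$N$ homogeneity of $h_N$. The caveat you flag is handled correctly—Lemma~\ref{lem:cap_symm} is indeed applied to a polynomial in one group of variables (with the other group held fixed at a positive point), and iterating the infimum over the two groups is legitimate since it is an infimum over a product of orthants.
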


\begin{proof}
    The second equality comes from the fact that the complete symmetric polynomial of
    degree $N$ in $mn$ variables, evaluated at the all-ones vector, counts the number
    of degree $N$ monomials in $mn$ variables. For the first equality, note that by symmetry we have
    \[
        \partial_{x_1} H_N(\bm{1},\bm{1}) \, = \,
        \frac{1}{m} \. \sum_{\ell=1}^m \. \partial_{x_\ell} H_N(\bm{1},\bm{1})
        \, = \, \frac{1}{m} \. \sum_{\ell=1}^m \. \sum_{\sum_{i,j} \ts k_{ij} = N} \.
        \sum_{j=1}^n k_{\ell j} \, = \, \frac{N}{m} \. H_N(\bm{1},\bm{1}),
    \]
    and similarly,
    \[
        \partial_{y_1} H_N(\bm{1},\bm{1}) \, = \frac{N}{n} \. H_N(\bm{1},\bm{1}).
    \]
    Therefore, we in fact have
    \[
        \left.\nabla \log(H_N)\right|_{\bm{x}=\bm{1},\bm{y}=\bm{1}} \, = \,
        \left(\ts\frac{N}{m}\., \. \ldots \. , \. \frac{N}{m}\., \.\frac{N}{n}\., \.\ldots\., \.\frac{N}{n}\ts\right) \,
        = \, (s, \ldots,s, \. t,\ldots, t).
    \]
    By Fact~2.10 of~\cite{GL18}, this implies \ts $\cpc_{\bm\alpha\,\bm\beta}(H_N) = H_N(\bm{1},\bm{1})$.
\end{proof}

\medskip

\begin{proof}[Proof of Theorem~\ref{t:uniform}] The exact values of UB1 and UB2
are given by the lemmas above.  The formulas for LB1, LB2 and New~LB follow immediately
from Theorem~\ref{thm:barvinok_general_bound}, Theorem~\ref{thm:barvinok_cs_bound} and
Main Theorem~\ref{thm:main_general_bound}, respectively.  For LB2, we use the fact
that $s\ge t$. Finally, the Shapiro correction term simplifies in the uniform case.
Indeed, the  product in the minimum the Remark~\ref{rem:main-shapiro} is equal on all spanning
trees $\tau\in K_{mn}$, which all have \ts $(m+n-1)$ \ts edges, and all edges have
the same weight \ts $\frac{1}{1+z_{ij}}\ts =\ts \frac{1}{1+N/mn}$ \ts from the proof of Lemma~\ref{l:Cap1-uniform}.
We omit the details.
\end{proof}

\bigskip

\section{Numerical examples}\label{sec:numerical}

In this section we compare the bounds numerically in specific cases where
the exact number of contingency tables is known either exactly or approximately.
Such comparisons are particulary easy when the marginals are uniform,
see Theorem~\ref{t:uniform}.

\medskip

\subsection{Uniform marginals}\label{ss:numerical-uniform}
Our first table consists of comparisons in the case of uniform marginals.
We give bounds for the number \ts $\CT(\bm\alpha,\bm\beta)$ \ts of
$m\times n$ contingency tables with row sums~$s$ and column sums~$t$,
so \ts $N= m\ts s = n \ts t$.  We use the notation in~$\S$\ref{ss:uniform-thm}
and explicit formulas from Theorem~\ref{t:uniform}.

\bigskip

\noindent
\resizebox{\columnwidth}{!}{
\begin{tabular}{c|cc|cc|ccc|c|ccc}
\textbf{Case} & $m$ & $n$ & $s$ & $t$ & UB1 & UB2 & UB3 & \color{blue}{\bf Actual} & \color{red}{\bf New LB} & LB2 & LB1 \\
        \hline\hline
\textbf{1}& 3 & 3 & 100 & 100 & $4.7 \times 10^{17}$ & $1.8 \times 10^{15}$ & $3.4 \times 10^{11}$ & \color{blue}{$1.3 \times 10^{7}$} & \color{red}{$3.1 \times 10^{5}$} & $2.4 \times 10^{3}$ & $1.5 \times 10^{-28}$ \\

\textbf{2} & 3 & 9 & 99 & 33 & $2.3 \times 10^{40}$ & $1.5 \times 10^{38}$ & $3.7 \times 10^{29}$ & \color{blue}{$2.8 \times 10^{21}$} & \color{red}{$7.3 \times 10^{17}$} & $5.6 \times 10^{15}$ & $1.2 \times 10^{-62}$ \\

\textbf{3} & 3 & 49 & 98 & 6 & $8.1 \times 10^{121}$ & $1.1 \times 10^{120}$ & $1.1 \times 10^{98}$ & \color{blue}{$1.0 \times 10^{68}$} & \color{red}{$9.1 \times 10^{55}$} & $6.4 \times 10^{53}$ & $4.1 \times 10^{-381}$ \\

\textbf{4}& 10 & 10 & 20 & 20 & $8.5 \times 10^{82}$ & $1.4 \times 10^{81}$ & $2.2 \times 10^{74}$ & \color{blue}{$1.1 \times 10^{59}$} & \color{red}{$5.7 \times 10^{49}$} & $4.8 \times 10^{41}$ & $5.2 \times 10^{-104}$ \\

\textbf{5}& 18 & 18 & 13 & 13 & $6.4 \times 10^{164}$ & $1.3 \times 10^{163}$ & $6.0 \times 10^{156}$ & \color{blue}{$7.9 \times 10^{127}$} & \color{red}{$1.1 \times 10^{110}$} & $2.7 \times 10^{95}$ & $1.1 \times 10^{-214}$ \\

\textbf{6}& 30 & 30 & 3 & 3 & $9.5 \times 10^{130}$ & $3.8 \times 10^{129}$ & $3.8 \times 10^{128}$ & \color{blue}{$2.2 \times 10^{92}$} & \color{red}{$2.2 \times 10^{73}$} & $1.6 \times 10^{56}$ & $2.2 \times 10^{-522}$ \\

\textbf{7}& 100 & 100 & 3 & 3 & $1.2 \times 10^{589}$ & $2.8 \times 10^{587}$ & $3.4 \times 10^{586}$ & \color{blue}{$5.3 \times 10^{459}$} & \color{red}{$4.9 \times 10^{394}$} & $4.1 \times 10^{332}$ & $1.5 \times 10^{-2267}$ \\
\hline



\textbf{8}& 4 & 4 & 300 & 300 & $9.9 \times 10^{36}$ & $1.3 \times 10^{34}$ & $5.1 \times 10^{25}$ & \color{blue}{$2.0 \times 10^{19}$} & \color{red}{$4.1 \times 10^{16}$} & $3.8 \times 10^{12}$ & $2.5 \times 10^{-39}$ \\

\textbf{9}& 9 & 9 & $10^3$ & $10^3$ & $1.1 \times 10^{201}$ & $4.4 \times 10^{197}$ & $1.8 \times 10^{168}$ & \color{blue}{$8.0 \times 10^{151}$} & \color{red}{$4.5 \times 10^{142}$} & $7.3 \times 10^{128}$ & $1.8 \times 10^{-32}$ \\

\textbf{10}& 9 & 9 & $10^5$ & $10^5$ & $7.7 \times 10^{362}$ & $3.1 \times 10^{357}$ & $1.4 \times 10^{298}$ & \color{blue}{$6.1 \times 10^{279}$} & \color{red}{$3.2 \times 10^{270}$} & $5.2 \times 10^{248}$ & $1.5 \times 10^{44}$ \\

\textbf{11} & 15 & 15 & $10^3$ & $10^3$ & $6.7 \times 10^{508}$ & $2.6 \times 10^{505}$ & $3.8 \times 10^{457}$ & \color{blue}{$\approx 1.7 \times 10^{427}$} & \color{red}{$1.7 \times 10^{409}$} & $2.3 \times 10^{384}$ & $1.3 \times 10^{80}$ \\

\textbf{12}& 15 & 15 & $10^5$ & $10^5$ & $1.3 \times 10^{958}$ & $5.1 \times 10^{952}$ & $1.1 \times 10^{851}$ & \color{blue}{$\approx 1.7 \times 10^{819}$} & \color{red}{$3.2 \times 10^{800}$} & $4.5 \times 10^{761}$ & $4.0 \times 10^{383}$ \\

\textbf{13}& 100 & 100 & $10^3$ & $10^3$ & $1.3 \times 10^{14553}$ & $6.0 \times 10^{14549}$ &  $8.2 \times 10^{14346}$ & \color{blue}{$\approx 6.3 \times 10^{14072}$} & \color{red}{$5.3 \times 10^{13869}$} & $4.6 \times 10^{13684}$ & $5.0 \times 10^{10741}$ \\

\textbf{14}& 100 & 100 & $10^5$ & $10^5$ & $1.3 \times 10^{34345}$ & $5.2 \times 10^{34339}$ & $1.1 \times 10^{33751}$ & \color{blue}{$\approx 6.3 \times 10^{33470}$} & \color{red}{$4.9 \times 10^{33263}$} & $4.4 \times 10^{32979}$ & $6.2 \times 10^{29545}$ \\
\hline\hline    \end{tabular}
}

\bigskip

{\small

\noindent
Here the actual values in cases 1--6 are taken from \cite[Table~1]{CM10},
in case~7 from~\cite[\href{https://oeis.org/A001500}{A001500}]{OEIS}
(computed by Heinz),
and in case~8 is from~\cite[Table~3]{dedetails} (see also~\cite[p.~27]{Lan}).
Actual values in cases 9--10 are computed from the exact form of the \emph{Ehrhart
polynomial} for the Birkhoff polytope $\mathcal{B}_9$ given in~\cite{BP03b}.

In the last four cases 11--14, the number of tables
is only given approximately and likely imprecise, but giving the
right order of magnitude.  In cases 11--12, we used a numerical
approximation for the volume of the Birkhoff polytope $\mathcal{B}_{15}$
given in~\cite[Table 6]{CV16} (see also~\cite[Table 1]{EF}). Finally, in cases 13--14,
we used the exact asymptotics, given in~\cite[Thm~1]{CM10}.}

\medskip

\subsection{Non-uniform marginals}\label{ss:numerical-non-uniform}
In the table below, the last column ``Time'' gives the CPU time it
took to compute Barvinok's UB2 and LB2. To compute UB1, LB1,
and our lower bound, the time never exceeds 2 seconds.
The stark difference between these two cases comes from
the fact that the complete symmetric polynomials associated to UB2/LB2
(see Section~\ref{ss:compare-2}) take much longer to
compute than the rational function
\ts $P_\infty(\bm{x}, \bm{y}) = \prod_{ij} (1-x_iy_j)^{-1}$ \ts
(see, however~$\S$\ref{ss:finrem-alg}).

\bigskip

\noindent
\resizebox{\columnwidth}{!}{
    \begin{tabular}{c|ccc|ccc|c|ccc|c}
        \textbf{Case} & m & n & N & UB1 & UB2 & UB3 & \color{blue}{\bf Actual} & \color{red}{\bf New LB} & LB2 & LB1 & Time \\
        \hline\hline
       \textbf{1} & 4 & 4 & 592 & $3.0 \times 10^{30}$ & $6.0 \times 10^{27}$ & $7.1 \times 10^{18}$ & \color{blue}{$1.2 \times 10^{15}$} & \color{red}{$9.5 \times 10^{12}$} & $4.6 \times 10^{8}$ & $3.8 \times 10^{-40}$ & 79 sec \\

       \textbf{2} & 5 & 4 & 1269 & $1.4 \times 10^{34}$ & $1.2 \times 10^{31}$ & $8.3 \times 10^{20}$ & \color{blue}{$3.4 \times 10^{16}$} & \color{red}{$2.0 \times 10^{14}$} & $3.0 \times 10^{7}$ & $1.5 \times 10^{-52}$ & 550 sec \\

        \textbf{3} & 4 & 4 & 65159458 & $1.3 \times 10^{112}$ & ? & $2.1 \times 10^{65}$ & \color{blue}{$4.3 \times 10^{61}$} & \color{red}{$5.8 \times 10^{58}$} & ? & $2.3 \times 10^{-49}$ & N/A \\

        \textbf{4} & 50 & 50 & 486 & $7.2 \times 10^{562}$ & ? & $1.3 \times 10^{551}$ & \color{blue}{??} & \color{red}{$5.2 \times 10^{421}$} & ? & $6.4 \times 10^{-749}$ & N/A \\

        \textbf{5} & 50 & 50 & 302 & $1.2 \times 10^{350}$ & ? & $7.3 \times 10^{338}$ & \color{blue}{??} & \color{red}{$1.1 \times 10^{239}$} & ? & $2.0 \times 10^{-922}$ & N/A \\
    \hline\hline\end{tabular}
}

\bigskip


{\small

\noindent
In the above table, case 1 is a celebrated example given by
$\bm{\alpha} = (220, 215, 93, 64)$ and
$\bm{\beta} = (108, 286, 71, 127)$.  It was first studied
in~\cite{DE}, the exact value was reported in~\cite[$\S$6]{DG95},
and further discussed as a benchmark in \cite{Bar17,BH12,D09,DS}.
Case~2 is given by
\ts $\bm{\alpha} = (9, 49, 182, 478, 551)$ and $\bm{\beta} = (9, 309, 355, 596)$,
was studied in~\cite[$\S$6.4]{C+05}.
Case~3 is a large example computed in~\cite[Table~3]{dedetails}, and is given by
{\small
$$\bm{\alpha} \. = \. (13070380, 18156451, 13365203, 20567424), \quad
\bm{\beta} \. = \. (12268303, 20733257, 17743591, 14414307).
$$
}
Case~4 is given by
\vspace{-0.16cm}
\begin{equation*}
\resizebox{.95\hsize}{!}{
$
    \bm{\alpha} \. = \. (10, 8, 11, 11, 13, 11, 10, 9, 7, 9, 10, 16, 11, 9,
    12, 14, 12, 7, 9, 10, 10, 6, 11, 8, 9, 8, 14, 12,
    5, 10, 10, 8, 7, 8, 10, 10, 14, 6, 10, 7, 13, 4, 6,
    8, 9, 15, 11, 12, 10, 6),
$
}
\end{equation*}
\vspace{-1.8em}
\begin{equation*}
\resizebox{.95\hsize}{!}{
$
    \bm{\beta} \. = \. (9, 6, 12, 11, 9, 8, 8, 11, 9, 11, 13, 7, 10, 8, 9, 7,
    8, 3, 10, 11, 13, 7, 5, 11, 10, 9, 10, 13, 9, 9, 7, 7,
    6, 8, 10, 12, 8, 12, 16, 12, 15, 12, 13, 13, 10, 7,
    12, 13, 6, 11).
$
}
\end{equation*}
\noindent
These marginals were introduced in~\cite[$\S$6.1]{C+05}, where an estimate
\ts $\BCT(\bm{\alpha},\bm{\beta})=(7.7 \pm .1) \times 10^{432}$ \ts was
given for the number of binary contingency tables using the sequential
importance sampling~(SIS). For comparison, Gurvits's Theorem~\ref{thm:main_binary_bound}
gives a large upper bound \ts $1.3 \times 10^{515}$, but a very sharp
lower bound \ts $8.9 \times 10^{431}$, with a $0.5$~sec.\ CPU time.

Finally, case~5 is given by
\vspace{-0.16cm}
\begin{equation*}
\resizebox{.95\hsize}{!}{
$
    \bm{\alpha} \. = \. (14, 14, 19, 18, 11, 12, 12, 10, 13, 16, 8, 12, 6, 15, 6, 7, 12, 1, 12, 3, 8, 5, 9, 4, 2, 4, 1, 4, 4, 5, 2, 3, 3, 1, 1, 1, 2, 1, 1, 2, 1, 3, 3, 1, 3, 2, 1, 1, 1, 2),
$
}
\end{equation*}
\vspace{-1.8em}
\begin{equation*}
\resizebox{.95\hsize}{!}{
$
    \bm{\beta} \. = \. (14, 13, 14, 13, 13, 12, 14, 8, 11, 9, 10, 8, 9, 8, 4, 7, 10, 9, 6, 7, 6, 5, 6, 8, 1, 6, 6, 3, 2, 3, 5, 4, 5, 2, 2, 2, 3, 2, 4, 3, 1, 1, 1, 3, 2, 2, 3, 5, 2, 5).
$
}
\end{equation*}

\noindent
These marginals were also introduced in~\cite[$\S$6.1]{C+05}, with
an estimate \ts $\BCT(\bm{\alpha},\bm{\beta})=(8.78 \pm .05) \times 10^{242}$.
Here Gurvits's Theorem~\ref{thm:main_binary_bound}
gives an upper bound \ts $1.7 \times  10^{309}$, and a  sharp
lower bound \ts $3.0 \times 10^{240}$, again with a $0.5$~sec.\ CPU time.

Note that we were unable to finish computation of the UB2 and LB2 in cases 3--5,
which overwhelmed our computer system.  This could be a problem with our
implementation, of course.  We would be curious to see these bounds if
someone could compute them.

}
\medskip

\subsection{Discussion}  \label{ss:numerical-discussion}
All bounds above are best viewed on the log-scale,
since they are multiplicative in nature and the approximation ratios grow
exponentially otherwise.  However, as \ts $\min\{\alpha_i,\beta_j\}\to \infty$,
we have \ts $\log$s of all bounds equal to \ts
$(1+o(1)) \log \CT(\bm\alpha,\bm\beta)$, even if the rate of
convergence implied by $o(1)$ notation vary greatly between the bounds.

Now, in all examples we computed, New~LB dominates
LB2 and dwarfs LB1, confirming the asymptotic bounds in
Section~\ref{sec:compare_bounds}. In fact, the latter is smaller than~1 in many cases.
In addition, as discussed above, New~LB is much faster to compute than
the LB2, sometimes by orders of magnitude faster.  Furthermore, in many
examples, especially with non-uniform margins, the upper bounds are
rather far from the actual number of contingency tables, while our
New~LB is much closer (on a log-scale).

We should mention an unusual situation in cases~4 and~5, when New~LB for
\ts $\CT(\bm{\alpha},\bm{\beta})$ \ts is smaller than Gurvits's~LB
for \ts $\BCT(\bm{\alpha},\bm{\beta})$. This is very counterintuitive,
and suggests that for relatively small marginals sometimes taking smaller
$K=(k_{ij})$ can give greater lower bounds for \ts $\CT_K(\bm{\alpha},\bm{\beta})$
than taking $K=\infty$ gives for \ts $\CT(\bm{\alpha},\bm{\beta})$.

Although we concentrate our efforts on the lower bounds, let us make
some observations about the upper bounds.  It is clear from definition
that UB1 $>$ UB3, but the relationship of UB2~vs.~UB3 is not so clear.
In fact, for $s$~and~$t$  small relative to $m$~and~$n$, the Shapiro
correction term is very small, and UB3 become close to~UB1.

\bigskip

\section{Final remarks}\label{sec:finrem}

\subsection{}\label{ss:finrem-hist}
There are many variations on the problem of counting general (unrestricted)
and binary contingency tables.  These include symmetric tables with
zero diagonal, which correspond to graphs with fixed degrees,
see~\cite{Wor}.  The technique of typical matrices was extended
to this setting in~\cite{BH13}.  High-dimensional tables are especially
important in statistical applications~\cite{Ev,Kat}, but even harder
to analyze computationally~\cite{DO}.  We refer
to~\cite{Ben} for a recent extension of Barvinok's first lower
bound to this setting.  It would be interesting if the Lorentzian
polynomials technique can be extended or modified in either of
these two directions.

In the paper, we consider only a special case of flow polytopes
and integer flows corresponding to weighted bipartite graphs,
see e.g.~\cite{BV,B+}.  In fact, flow polytopes for general
directed graphs can be reduced to this case via a simple
\emph{BDV--transformation} of graphs, which gives a bijection
between the flows~\cite{BDV}.  Finally, our lower bound can be
further extended to \emph{weighted contingency tables}, which can be
viewed as evaluations of the natural generating function of
$\CT(\bm{\alpha},\bm{\beta})$, see e.g.~\cite[$\S$8.5]{Bar16}.

\smallskip

\subsection{}\label{ss:finrem-alg}
From the computational complexity point of view, the upper
and lower bounds in our paper can be viewed as a deterministic
approximation algorithm. The algorithm has an exponential
 approximation ratio, of course.  For some special cases
 of the problem, such as the permanent, there is a probabilistic
 polynomial time approximation algorithm~\cite{JSV}.

Now, capacities in the paper are solutions of a convex polynomial
optimization problem, which can be solved in polynomial time by
the classical interior point methods, see e.g.~\cite{NN}. The minimization of
Shapiro's correction term in Remark~\ref{rem:main-shapiro}
is an instance of the minimum spanning tree problem and
can be solved by the greedy algorithm.  On the other hand,
computing the complete homogeneous polynomial necessary for Barvinok's
second bound (Theorem~\ref{thm:barvinok_cs_bound}) is harder, but can
be done efficiently in several different ways, e.g.\ via~\cite[Thm~1]{BIM}.

For flow polytopes, the covolume $f(S,m,n)$ in~$\S$\ref{ss:volume-thm}
is a matrix determinant of size at most~$mn$, and thus easy to compute.
Finally, the \emph{BDV--transformation} mentioned above gives at most a
quadratic blowup in the size of the graph, and is also easy to compute.

\smallskip

\subsection{}\label{ss:finrem-graphs}
We can interpret a graphical matrix $K$ as an $m \times n$ adjacency matrix of
a bipartite graph~$G$.  Then \ts $\CT_K(\alpha,\beta)$ \ts is the number of
subgraphs of $G$ with degree sequences given by $\bm{\alpha}$ and $\bm{\beta}$.
In this case, the vectors $\bm{\lambda}$ and $\bm{\gamma}$ in
Theorem~\ref{thm:main_binary_bound} correspond to the degree
sequences of~$G$.

\smallskip

\subsection{}\label{ss:finrem-indep} There is a surprisingly strong
(unproven) \emph{independence heuristic} due to Good~\cite{Good},
for the number of (unconstrained) contingency tables:
$$
\CT(\bm{\alpha}, \bm{\beta}) \, \approx \, \CTI(\bm{\alpha}, \bm{\beta}) \, := \, \binom{N+mn-1}{mn-1}^{-1} \.
\prod_{i=1}^{m} \binom{\alpha_i+n-1}{n-1} \, \prod_{j=1}^{n} \binom{\beta_j+m-1}{m-1}\ts.
$$
This heuristic is discussed further in~\cite{Bar09,DE,DG95}.  For the uniform marginals,
a weak version of the heuristic is stated as a conjecture in~\cite[Conj.~1]{CM10},
where it is proved asymptotically in some ``near-square'' cases.

For example, in the uniform case~4 in~$\S$\ref{ss:numerical-uniform},
we have \ts $\CTI(\bm{\alpha}, \bm{\beta}) = 7.4 \times 10^{58}$,
which is much closer to the actual value \ts $\CT(\bm{\alpha}, \bm{\beta}) = 1.1\times 10^{59}$ \ts
than any of the bounds. Similarly, in the non-uniform case~3 in~$\S$\ref{ss:numerical-non-uniform},
we have \ts $\CTI(\bm{\alpha}, \bm{\beta}) = 3.7 \times 10^{61}$,
which is again much closer to the actual value \ts $\CT(\bm{\alpha}, \bm{\beta}) = 4.3\times 10^{61}$ \ts
than any of the bounds.  Finally, for the non-uniform case~4 in~$\S$\ref{ss:numerical-non-uniform},
we have \ts $\CTI(\bm{\alpha}, \bm{\beta}) = 7.8\times 10^{471}$, a very reasonable guess
given that the New~LB is clearly undercounting \ts $\CT(\bm{\alpha}, \bm{\beta})$ \ts in this case,
cf.~$\S$\ref{ss:numerical-discussion}.

\smallskip

\subsection{}\label{ss:finrem-special}
Let $m$, $n$ and $N$ be fixed, and let $K=\infty$.  When we vary the marginals
\ts $\bm{\alpha}$ \ts and \ts $\bm{\beta}$ \ts over all partitions of~$N$,
the uniform case has the largest approximation ratio in the
Main Theorem~\ref{thm:main_general_bound}, while $\CT(\bm{\alpha},\bm{\beta})$
is also the largest of all such marginals, see~\cite{Bar10b,PP}.
This explains why our New~LB can be still far away from the actual value
in~$\S$\ref{ss:numerical-uniform}.  This can also be seen in the approximation
ratio in Theorem~\ref{thm:almost_const} and in the lower term gap in the
volume of the Birkhoff polytope (Example~\ref{ex:volume-Birkhoff}).

On the other hand, all previously known lower bounds and other techniques
tend to behave rather poorly when the matrix is far from uniform, and this
includes the MCMC algorithms, see \cite{BLSY,DKM}.  So perhaps our lower
bound coupled with Shapiro's upper bound are the only provably good
bounds in that case.

Let us also mention that it is unlikely there is a universally good
lower and upper bound for the general \ts $\CT(\bm{\alpha},\bm{\beta})$.
Sidestepping conjectural hardness of approximation results in computational
complexity, there is also a probabilistic evidence of this phenomenon.
In the simplest nonuniform case with marginals of two types, we already
have a phase transition for the number of contingency tables, first
predicted in~\cite{Bar10b}, and recently proved in~\cite{DLP}.

\vskip.3cm

{\small
\subsection*{Acknowledgements}
We are grateful to Nima Anari, Sasha Barvinok, Sam Dittmer, Leonid Gurvits,
Mark Handcock, Han Lyu, Brendan McKay and Greta Panova for many
helpful discussions.  Special thanks to Sasha Barvinok for telling
us about \cite[$\S$8]{Bar16} and~\cite{Sha}, to Jes\'us De Loera for
showing us~\cite{Lan}, and to Matt Beck who kindly
restored~\cite{BP03b} for us.  This project was initiated when
the authors were hosted at the Mittag-Leffler Institute in
the (rather eventful) Spring of~2020; we are grateful for
the hospitality.  The first author was partially supported by the
Knut and Alice Wallenberg Foundation and the G\"oran Gustafsson
Foundation. The third author was partially supported by the NSF.
}

\vskip.6cm


\end{document}